\documentclass[10pt,reqno]{amsart}

\usepackage[margin=1in]{geometry} 
\usepackage{amsmath,amsthm,amssymb,bbm,bm}
\usepackage{graphicx}
\usepackage{hyperref}
\usepackage{amssymb,amsfonts}
\usepackage{enumerate}
\usepackage{enumitem}
\usepackage{mathrsfs}
\usepackage{xcolor}
\usepackage{tikz-cd}

\newcommand{\R}{\mathbb{R}}

\newcommand{\E}{\mathbb{E}}

\newcommand{\Cs}{\mathscr{C}}

\newcommand{\Var}{\mathrm{Var}}
\newcommand{\cov}{\mathrm{cov}}

\newcommand{\Beta}{\mathtt{Beta}}

\newcommand{\Gam}{\mathtt{Gamma}}

\newcommand{\PD}{\mathtt{PD}}
\newcommand{\PYP}{\mathtt{PYP}}

\newcommand{\Address}{{
\bigskip
\footnotesize

\textsc{Department of Mathematics \& Statistics, McMaster University, Hamilton, ON, L8S 4K1, Canada}\par\nopagebreak
\textit{E-mail address}: \texttt{paguyoj@mcmaster.ca}
}}

\def\bal#1\eal{\begin{align*}#1\end{align*}}

\newtheorem{theorem}{Theorem}[section]
\newtheorem{lemma}[theorem]{Lemma}
\newtheorem{proposition}[theorem]{Proposition}

\title[CLT for the homozygosity of the HPYP]{Central limit theorem for the homozygosity of the hierarchical Pitman-Yor process}
\author{Shui Feng and J. E. Paguyo}
\date{}

\subjclass[2020]{60G57, 62F15}
\keywords{Poisson-Dirichlet distribution, Pitman-Yor process, Dirichlet process, hierarchical Pitman-Yor process, homozygosity, diversity index, Bayesian nonparametrics.}

\begin{document}

\begin{abstract}
The hierarchical Pitman-Yor process is a discrete random measure used as a prior in Bayesian nonparametrics. It is motivated by the study of groups of clustered data exhibiting power law behavior. Our focus in this paper is on the Gaussian behavior of a family of statistics, namely the power sum symmetric polynomials for the vector of weights of the process, as the concentration parameters tend to infinity. We establish a central limit theorem and obtain explicit representations for the asymptotic variance, with the latter clearly showing the impact of each component in the hierarchical structure. These results are crucial for understanding the asymptotic behavior of the sampling formulas associated with the process. In comparison with the known results for the hierarchical Dirichlet process, the results for the hierarchical Pitman-Yor process are mathematically more challenging and structurally more revealing of power law behavior. 
\end{abstract}

\maketitle


\section{Introduction} \label{Sec: Introduction}

Let 
\bal
\Delta  = \left\{\bm{x}= (x_1, x_2, \ldots): 0\leq x_i\leq 1, \text{for $ i=1,2,\ldots$, and } \sum_{i=1}^\infty x_i =1\right\}.
\eal
For $\bm{x} \in \Delta$, denote the {\em power sum symmetric polynomials} by
\bal
\varphi_m(\bm{x})=\sum_{i=1}^\infty x_i^m, \ \  m=2, 3, \ldots. 
\eal
Consider a random sample of size $m \geq 2$ from a population of individuals of various types, with type distribution given by $\bm{x} \in \Delta$. Then $\varphi_m(\bm{x})$ represents the probability that all individuals in the sample are of the same type. This quantity arises in many contexts, such as population genetics, ecology \cite{Sim49}, and economics \cite{Her50, Hir45}. In Bayesian statistics, $\bm{x}$ is replaced by a random vector. 

For $\alpha \in (0,1)$ and $\theta > -\alpha$, let $\{U_k\}_{k=1}^\infty$ be a sequence of independent random variables such that $U_k$ is $\Beta(1-\alpha, \theta + k\alpha)$ distributed. Let
\bal
V_1 = U_1 \quad \text{and} \quad V_n = \prod_{k=1}^{n-1} (1 - U_k)U_n, \quad \text{for $n \geq 2$},
\eal
and let $\bm{V} = \bm{V}(\alpha,\theta) = (V_1, V_2, \ldots)$. The {\em two parameter Poisson-Dirichlet distribution} with concentration parameter $\theta$ and discount parameter $\alpha$, denoted $\PD(\alpha, \theta)$, is the distribution of $\bm{V}$ in descending order. 

Independently of $\bm{V}$, let $S$ be a Polish space and let $M_1(S)$ denote the space of probabilities on $S$ equipped with the weak topology. For any $\nu$ in $M_1(S)$, let $\{\xi_k\}_{k=1}^\infty$ be independent and identically distributed (iid) random variables with common distribution $\nu$. The {\em Pitman-Yor process} with concentration parameter $\theta$, discount parameter $\alpha$, and base distribution $\nu$, denoted by $\PYP(\alpha, \theta, \nu)$, is the discrete random probability measure
\bal
\Xi_{\alpha, \theta, \nu} = \sum_{i=1}^\infty V_i \delta_{\xi_i}. 
\eal
A comprehensive discussion of the  two-parameter Poisson-Dirichlet distribution and the Pitman-Yor process is found in Pitman and Yor \cite{PY97}.  The case $\alpha = 0$ is the {\em Dirichlet process}, first introduced by Ferguson \cite{Fer73}.  Asymptotic behavior of the Poisson-Dirichlet distribution and the Pitman-Yor process has been the focus of intensive research in the past few decades \cite{Fen10, GV17}. 

The Pitman-Yor and Dirichlet processes are used as fundamental priors in Bayesian nonparametrics for the study of clustered data and serve as building blocks for priors in the study of more complex data. Let $\alpha, \beta \in [0,1)$, $\theta_0 > -\alpha$, and $\theta > -\beta$. The {\em hierarchical Pitman-Yor process} (henceforth HPYP) \cite{Teh06} restricted to one group with  concentration parameters $\theta_0, \theta$, discount parameters $\alpha, \beta$,  and base distribution $\nu$ is a Pitman-Yor process whose base distribution is a draw from another Pitman-Yor process,
\bal
\Xi_{\alpha, \theta_0, \beta, \theta, \nu} \stackrel{D}{=} \Xi_{\beta, \theta, \Xi_{\alpha, \theta_0, \nu}},
\eal
where $\stackrel{D}{=}$ denotes equality in distribution. Equivalently, it can be written as the discrete random probability measure
\bal
\Xi_{\alpha, \theta_0, \beta, \theta, \nu} = \sum_{i=1}^\infty Z_i \delta_{\xi_i}. 
\eal
where $\bm{Z} = \bm{Z}(\alpha, \theta_0, \beta, \theta) = (Z_1, Z_2, \ldots)$ is the vector of weights of the HPYP, which we call the {\em HPYP masses}. A subordinator representation of these masses will be derived and used to establish the main result. Setting $\alpha = \beta = 0$ yields the hierarchical Dirichlet process (henceforth HDP) \cite{TJBB06}.

Given an integer $L \geq 1$, the HPYP with $L$ groups, $(\Xi^1_{\alpha, \theta_0, \beta, \theta, \nu}, \ldots, \Xi^L_{\alpha, \theta_0, \beta, \theta, \nu})$, is a collection of discrete random measures that share the same Pitman-Yor process base measure $\Xi_{\alpha, \theta_0, \nu}$ and are conditionally iid given the base measure. We refer to the base Pitman-Yor process, $\Xi_{\alpha, \theta_0, \nu}$, as the {\em level one Pitman-Yor process}, and $\Xi^k_{\alpha, \theta_0, \beta, \theta, \nu}$, for $1\leq k \leq L$, as the {\em level two Pitman-Yor processes}. The level one Pitman-Yor process describes the global impact, while the level two Pitman-Yor processes describes the group specific impact. Accordingly, $\theta$  and $\beta$ are the level two concentration parameter and discount parameter, while $\theta_0$ and $\alpha$ are the level one concentration parameter and discount parameter.
For more on the hierarchical Dirichlet process and other hierarchical Bayesian nonparametric models, see \cite{ACV20, CLOP19, Teh06, TJ10, TJBB06} and the references therein. 

Consider an HPYP with L groups, $(\Xi^1_{\alpha, \theta_0, \beta, \theta, \nu}, \ldots, \Xi^L_{\alpha, \theta_0, \beta, \theta, \nu})$. For all $1 \leq k \leq L$, let ${\bf Z}_k=(Z_{k, 1}, Z_{k,2}, \ldots)$ be the HPYP masses of $\Xi^k_{\alpha, \theta_0, \beta, \theta, \nu}$, so that 
\bal
\Xi^k_{\alpha, \theta_0, \beta, \theta, \nu}=\sum_{i=1}^\infty Z_{k,i}\delta_{\xi_i}.
\eal
Define the set $M_{n, L} := \{\bm{n} = (n_1,\ldots, n_L) : \text{$n_k \geq 0$ for all $1 \leq k \leq L$ and $n_1 + \dotsb + n_L = n$}\}$.
The {\em $m$th order homozygosity} of the Pitman-Yor process and the HPYP with $L$ groups are defined, respectively, as
\bal
H_m(\alpha, \theta_0) &=\varphi_m(\bm{V}) = \sum_{i=1}^\infty V_i^m \quad \text{and} \quad H_{m,L}(\alpha, \theta_0, \beta, \theta) = \sum_{i=1}^\infty \sum_{\bm{m} \in M_{m,L}} \frac{1}{L^m} \prod_{k=1}^L Z_{k,i}^{m_k}.
\eal
Our definition for the homozygosity of the HPYP with $L$ groups is for a random sample of size $m$, where each sample is drawn equally likely from any one of the $L$ groups. 

The origin of the term homozygosity comes from population genetics, where it corresponds specifically to $H_2(0,\theta)$. It is a statistic used to test neutrality of a population.  We will use the term loosely in this paper. There has been recent work on the asymptotic behavior of the homozygosity as the concentration parameter tends to infinity. Central limit theorems (henceforth CLT) for $H_m(\alpha, \theta_0)$ have been established in \cite{Gri79, JKK02, JKK03, Han09, Fen10}, for both the $\alpha = 0$ and $\alpha \in (0,1)$ cases. Large deviation principles were established in \cite{DF06, DF16} and moderate deviation principles in \cite{FG08, FG10}. 

There have been new developments in our understanding of hierarchical Bayesian nonparametric models in past few years.  These include the comprehensive study of the distribution theory in \cite{CLOP19, ACV20} and the large deviation principle for HDP \cite{Fen23}. More recently, the authors established the CLT and large deviation principle for the homozygosity of the HDP and the finite dimensional HDP \cite{FP25}. Asymptotic behavior of the number of clusters for the HPYP are investigated in \cite{FFP25}.

The collection of the homozygosities of different orders are special statistics of the random sample. They play a crucial role in understanding the sampling distribution and in deriving the sampling formula. In this paper, we consider the asymptotic behavior of the homozygosity of the HPYP as $\theta_0, \theta$ both tend to infinity in the case where $\alpha, \beta \in (0,1)$. In comparison with the corresponding result for HDP,  the main difficulties we face are the loss of the Gamma-Dirichlet algebra and the extra layer of randomness. 
In population genetics, large values of concentration parameters  correspond to a locus consisting of a large number of sites, with a fixed mutation rate at each site \cite{Gri79}.  In the context of Bayesian inference, the large concentration limits are associated with the Pitman-Yor posterior limits for large sample size. In ecology, this limiting regime corresponds to a system where there are a large number of species with small proportions. 


\subsection{Notation}

Let $a_n, b_n$ be two sequences. If $\lim_{n \to \infty} \frac{a_n}{b_n}$ is a nonzero constant, then we write $a_n \asymp b_n$. If the limit is $1$, we write $a_n\sim b_n$. If there exists positive constants $c$ and $n_0$ such that $a_n \leq cb_n$ for all $n \geq n_0$, then we write $a_n = O(b_n)$. 

Convergence in distribution, in probability, and almost surely are denoted by $\stackrel{D}{\longrightarrow}$, $\stackrel{P}{\longrightarrow}$, and $\stackrel{a.s.}{\longrightarrow}$, respectively. Similarly, equality in distribution, in probability, and almost surely are denoted by $\stackrel{D}{=}$, $\stackrel{P}{=}$, and $\stackrel{a.s.}{=}$, respectively. If $X$ is a random variable distributed as $\nu$, then we write $X \sim \nu$. 

Let $(x)_{(n)} = x(x+1)(x+2)\dotsb (x+n-1)$ be the {\em rising factorial}. The {\em generalized factorial coefficients} \cite{Cha05} are defined as
\bal
\Cs(m,j,\beta) = \frac{1}{j!} \sum_{k=0}^j (-1)^k \binom{j}{k}(-k\beta)_{(m)}. 
\eal

Let ${n \brack k}$ denote the {\em unsigned Stirling numbers of the first kind}, which are defined to be the coefficients of the rising factorial $(x)_{(n)} = \sum_{\ell = 0}^n {n \brack \ell} x^\ell$. The number ${n \brack k}$ is also defined as the number of permutations on $n$ elements with exactly $k$ cycles. Let ${n \brace k}$ be a {\em Stirling number of the second kind}, which is the number of ways to partition a set of size $n$ into $k$ nonempty subsets. 

\subsection{Main Results}

Our main result is the CLT for $H_{m,L}(\alpha, \theta_0, \beta, \theta)$, the homozygosity of the HPYP with $L$ groups.

\begin{theorem} \label{HPYPCLTLGroups}
Under the limiting procedure $\theta_0, \theta \to \infty$ such that $\frac{\theta_0}{\theta} \to c \in (0,\infty)$,
the homozygosity of the HPYP with $L$ groups satisfies
\bal
\tilde{H}_{m,L}(\alpha, \theta_0, \beta, \theta) &= \frac{\sqrt{\theta}}{f(\alpha, \beta, \theta; m, L, c)} \left( H_{m,L}(\alpha, \theta_0, \beta, \theta) - \frac{\sum_{j=1}^m A_j(\beta, m, L) \frac{ \prod_{s=1}^{j-1} (\theta + s\beta) (1 - \alpha)_{(j-1)} }{ \beta^j \theta_0^{j-1}} }{L^m \theta^{m-1}} \right) \\
&\xrightarrow{D} N(0, \sigma_{c,m,L}^2)
\eal
for all $m \geq 2$, where the variance is given by
\bal
&\sigma_{c,m,L}^2 = \sigma_{X,m,L}^2 + \sigma_{T,m,L}^2 + \sigma_{1,m,L}^2 + \sum_{k=1}^L \left( \sum_{\bm{m} \in M_{m,L}} C_{\bm{m}} m_k \right)^2 \\
&- 2\sum_{k=1}^L \left( \frac{ \sum_{j=1}^m \left[\left(\sum_{\bm{m} \in M_{m,L}} \sum_{\bm{j} \in M_{j, L}} (m_k - \beta j_k) \prod_{\ell = 1}^L \Cs(m_\ell, j_\ell, \beta) \right) + \beta j A_j(\beta, m, L) \right] \frac{(1 - \alpha)_{(j-1)}}{\beta^j c^{j-1}} }{\sum_{j=1}^m A_j(\beta, m, L) \frac{(1 - \alpha)_{(j-1)}}{\beta^j c^{j-1}}} \right)^2
\eal
where
\bal
\sigma_{1,m,L}^2 &= \frac{\sum_{1 \leq i,j \leq m} \frac{A_i(\beta, m, L)A_j(\beta, m, L)\left[ (1 - \alpha)_{(i+j-1)} + (\alpha - ij)(1 - \alpha)_{(i-1)}(1 - \alpha)_{(j-1)} \right]}{\beta^{i+j} c^{i+j-1}}}{\left( \sum_{j=1}^m A_j(\beta, m, L) \frac{(1 - \alpha)_{(j-1)}}{\beta^j c^{j-1}} \right)^2} \\
\sigma_{T,m,L}^2 &= \beta \frac{ \sum_{1 \leq i,j \leq m} \frac{ A_i(\beta, m, L)A_j(\beta, m, L) (1 - \alpha)_{(i-1)}(1 - \alpha)_{(j-1)} }{\beta^{i+j} c^{i+j-2}} \cdot ij}{\left( \sum_{j=1}^m A_j(\beta, m, L) \frac{(1 - \alpha)_{(j-1)}}{\beta^j c^{j-1}} \right)^2} \\
\sigma_{X,m,L}^2 &= \frac{\sum_{j=1}^{2m} \tilde{A}_j(\beta, m, L) \frac{(1 - \alpha)_{(j-1)}}{\beta^j c^{j-1}} -  \sum_{1 \leq i,j \leq m} A_i(\beta, m, L) A_j(\beta, m, L)  \frac{(1 - \alpha)_{(i+j-1)}}{\beta^{i+j} c^{i+j-1}}  }{\left( \sum_{j=1}^m A_j(\beta, m, L) \frac{(1 - \alpha)_{(j-1)}}{\beta^j c^{j-1}} \right)^2}
\eal
and the coefficients $\{A_j(\beta, m, L)\}_{1 \leq j \leq m}$, $\{\tilde{A}_j(\beta, m, L)\}_{1 \leq j \leq 2m}$, and $\{C_{\bm{m}}\}_{\bm{m} \in M_{m,L}}$ are defined, respectively, as
\bal
A_j(\beta, m, L) &= \sum_{\bm{m} \in M_{m,L}} \sum_{\bm{j} \in M_{j,L}} \prod_{k=1}^L \Cs(m_k,j_k,\beta) \\
\tilde{A}_j(\beta, m, L) &= \sum_{\bm{m}_1 \in M_{m,L}} \sum_{\bm{m}_2 \in M_{m,L}} \sum_{\bm{j} \in M_{j,L}} \prod_{k=1}^L \Cs(m_{1k} + m_{2k}, j_k, \beta) \\
C_{\bm{m}} &= \frac{\sum_{j=1}^m \sum_{\bm{j} \in M_{j,L}} \prod_{\ell = 1}^L \Cs(m_\ell, j_\ell, \beta) \frac{(1 - \alpha)_{(j-1)}}{\beta^j c^{j-1}}}{\sum_{j=1}^m A_j(\beta, m, L) \frac{(1 - \alpha)_{(j-1)}}{\beta^j c^{j-1}}}.
\eal
\end{theorem}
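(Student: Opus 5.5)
The plan is to reduce $H_{m,L}$ to a smooth function of a few asymptotically jointly Gaussian quantities and then apply the delta method. Each variance component in the statement corresponds to one independent source of randomness.

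\emph{Step 1: subordinator representation of the masses.} Let $\bm V\sim\PD(\alpha,\theta_0)$ be the level one weights, and partition $[0,1]$ into consecutive intervals $I_i$ of length $V_i$. For group $k$ I will take a $\beta$-stable subordinator $\tau_k$ and an independent random time $T_k$ with $T_k^{\beta}$-scaling governed by a $\Gam(\theta/\beta)$ variable. I will then represent
\[
Z_{k,i}=\frac{\tau_k(T_k\cdot I_i)}{\tau_k(T_k)},
\]
meaning the increment of $\tau_k$ over the rescaled interval $T_kI_i$, divided by the total. The subordinators $\tau_1,\dots,\tau_L$ and times $T_1,\dots,T_L$ are independent, and all are independent of $\bm V$. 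The first thing to verify is that this reproduces $\PYP(\beta,\theta,\Xi_{\alpha,\theta_0,\nu})$ conditionally on $\bm V$. I expect this to follow from Pitman's representation of $\PD(\beta,\theta)$ as normalized stable jumps tilted by $\tau^{-\theta}$, which is equivalent to a Gamma time change.

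With this representation,
\[
L^m H_{m,L}=\frac{N}{\prod_k \tau_k(T_k)^{m_k}}\quad\text{summed over }\bm m,
\]
where
\[
N_{\bm m}=\sum_i \prod_k \tau_k(T_kV_i)^{m_k}
\]
(increments over $I_i$). Moments of stable increments satisfy $\E[\tau(s)^{n}]$-type identities (via the Laplace exponent) that are polynomial in $s$, with coefficients $\Cs(n,j,\beta)/\beta^j$ multiplying $s^{j}$-powers after normalization. This is exactly how $A_j$ and $\tilde A_j$ arise.

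\emph{Step 2: conditional CLTs, layer by layer.}
\begin{enumerate}[label=(\roman*)]
\item Conditionally on $\bm V$ and $\bm T$, $N_{\bm m}$ is a sum of independent terms. A Lyapunov CLT gives Gaussian fluctuations whose variance is the second moment minus the squared first moment. Averaged over $\bm V$ using $\E\sum_i V_i^{j}\approx (1-\alpha)_{(j-1)}\theta_0^{1-j}$, this yields $\sigma^2_{X,m,L}$, with $\tilde A_j$ for the second moment and the $A_iA_j$ product for the square of the mean.
\item The times $T_k$ have fluctuations of relative order $\theta^{-1/2}$ with variance factor $\beta$. Differentiating the mean $\sum_j A_j (\cdots)T^{j}$ in $T$ produces the factor $ij$, and this gives $\sigma^2_{T,m,L}$.
\item The level one fluctuations of $\big(\sum_i V_i^{j}\big)_{j\le 2m}$ are handled by the known multivariate CLT for $\PD(\alpha,\theta_0)$ power sums (Joyce--Krone--Kurtz, Feng). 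Its covariance
\[
(1-\alpha)_{(i+j-1)}+(\alpha-ij)(1-\alpha)_{(i-1)}(1-\alpha)_{(j-1)}
\]
produces $\sigma^2_{1,m,L}$.
\item The denominators $\tau_k(T_k)$ are sums of the same increments. Their joint CLT with the numerators contributes the cross-covariances, namely the $\sum_k(\sum_{\bm m}C_{\bm m}m_k)^2$ term and the subtracted squared term involving $(m_k-\beta j_k)$.
\end{enumerate}

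\emph{Step 3: assembly.} The layers are nested conditionally, so I will combine them through characteristic functions. Take the conditional characteristic function given $(\bm V,\bm T)$, show that it converges in probability to a Gaussian one with deterministic limit variance, and then use independence of $\bm T$ and $\bm V$ to add the variances. A first order expansion of the ratio $N/\prod_k\tau_k^{m_k}$ around its centering yields the displayed limit. Here $f$ normalizes by the leading mean, and $\theta_0/\theta\to c$ converts the powers of $\theta_0$ into powers of $c$.

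\emph{Main obstacle.} The hardest step is (i)/(iv): the joint Lyapunov CLT for numerator and denominator increments with uniform control. This requires bounding the conditional $2+\delta$ moments of $\prod_k\tau_k(T_kV_i)^{m_k}$ by $\sum_i V_i^{j}$-type quantities, and showing that these random conditional variances concentrate at the scale $\theta^{-1/2}$ relative to the mean. I would first try the explicit stable moment formulas with generalized factorial coefficients, together with the $\PD$ law of large numbers $\theta_0^{j-1}\sum_iV_i^j\to(1-\alpha)_{(j-1)}$.
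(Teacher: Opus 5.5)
Your overall architecture is the paper's: a subordinator representation, a conditional Lyapunov CLT for the stable increments, the delta method for the gamma layer, the known CLT for the level one power sums, the delta method for the denominators, and assembly through conditional characteristic functions. Two steps fail as written, and the second is a genuine gap.

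\emph{Step 1 as stated does not work.} A pure $\beta$-stable subordinator is self-similar. So the normalized increments of $\tau_k$ over $T_kI_i$ have the same law whatever the value of the independent time $T_k$, and you would get the level two process with $\theta=0$. Moreover $\E[\tau_k(s)^n]=\infty$ for every $n\ge 1$, so the polynomial moment identities you invoke do not exist. The $\tau^{-\theta}$ tilt is a change of measure, not a time change. Under it the increments are no longer independent, which would break your conditional Lyapunov step. What works, and what the paper uses, is the exponentially tempered subordinator with L\'evy density $\beta C x^{-\beta-1}e^{-x}$, run to time $\gamma_{1/\beta}t/(C\Gamma(1-\beta))$ with $\gamma_{1/\beta}\sim\Gam(\theta/\beta,1)$. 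Then the total mass is $\Gam(\theta,1)$ and independent of the normalized masses, and $\E[W^n\mid \bm{V},\gamma_{1/\beta}]=\sum_j \Cs(n,j,\beta)\gamma_{1/\beta}^j V^j$ (Lemma~\ref{incrementsconditionalmean}).

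\emph{The variance is never actually computed.} The explicit variance is the real content of the theorem, but (ii) and (iv) match labels rather than compute it. Under your own coupling it does not come out for $L\ge 2$. Put $w_j=(1-\alpha)_{(j-1)}/(\beta^j c^{j-1})$, $D_k=\sum_{\bm{m}}C_{\bm{m}}m_k$, and $G_k=\tilde f^{-1}\sum_j w_j\sum_{\bm{m}\in M_{m,L}}\sum_{\bm{j}\in M_{j,L}} j_k\prod_\ell \Cs(m_\ell,j_\ell,\beta)$. Then the displayed $\sigma^2_{T,m,L}$ equals $\beta(G_1+\dots+G_L)^2$.
\begin{itemize}
\item In (ii) you write the conditional mean as $\sum_j A_j(\cdots)T^j$ in one variable $T$. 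That is the paper's coupling, in which a single $\gamma_{1/\beta}$ is shared by all groups, so that $\prod_\ell \gamma_{1/\beta}^{j_\ell}=\gamma_{1/\beta}^{j}$.
\item With your independent $T_1,\dots,T_L$, the gamma layer is asymptotically $\sum_k G_k T_{1,k}$, where $T_{1,k}$ is the standardized fluctuation of the $k$th time. It contributes $\beta\sum_k G_k^2$, not $\beta(\sum_k G_k)^2$.
\item In (iv) the delta method turns the denominators into $-\sum_k D_k Z_k$ with $Z_k=X_{1,k}+T_{1,k}$. The cross terms are therefore $-2\sum_k D_k\cov(X+T,Z_k)$, which are linear, not squared.
\item The stable part $\cov(X,X_{1,k})=B_k$ requires the recurrence $\Cs(n+1,j,\beta)=(n-\beta j)\Cs(n,j,\beta)+\beta\Cs(n,j-1,\beta)$, which your plan never mentions. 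It gives $B_k+\beta G_k=D_k$.
\end{itemize}
Your route therefore ends at
\[
\sigma^2_{X,m,L}+\beta\sum_k G_k^2+\sigma^2_{1,m,L}-\sum_k D_k^2 .
\]
This agrees with the displayed $\sigma^2_{c,m,L}$ when $L=1$ (where $D_1=m$), but not for $L\ge 2$.

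Switching to the shared $\gamma_{1/\beta}$ does not rescue the match. It makes the groups dependent given $\bm{V}$, contrary to the definition of the $L$-group HPYP. It also gives $\cov(Z_k,Z_{k'})\to\beta$ for $k\ne k'$, and $\cov(X+T,Z_k)=B_k+\beta\sum_\ell G_\ell\neq D_k$. The displayed last two terms arise from the paper treating $(Z_1,\dots,Z_L)$ as asymptotically $N(\bm{0},\bm{I}_L)$ in that coupling. They also replace $D_k(B_k+\beta\sum_\ell G_\ell)$ by $(B_k+\beta\sum_\ell G_\ell)^2$, which is exact only for $L=1$. So for $L\ge 2$ you must carry out the covariance computation yourself, and you should not expect it to reproduce the displayed formula verbatim.
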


\noindent {\bf Remarks:}
\begin{enumerate}
\item Consider the single group case $L = 1$. When $c$ tends to infinity, the level 1 concentration parameter, $\theta_0$, goes to infinity faster than the level 2 concentration parameter, $\theta$, and the Gaussian fluctuation is solely determined by the level two Pitman-Yor process. Using the fact that $\Cs(m,1,\beta) = \beta(1-\beta)_{(m-1)}$, we have that
\bal
&\lim_{c \to \infty} \sigma_{c,m,1}^2 \\
&= \lim_{c \to \infty} \frac{\sum_{j=1}^{2m} \frac{\Cs(2m, j, \beta) (1 - \alpha)_{(j-1)}}{\beta^j c^{j-1}} + \sum_{1 \leq i,j \leq m} \frac{\Cs(m,i,\beta) \Cs(m,j,\beta) (\alpha - ij + \beta c ij) (1 - \alpha)_{(i-1)} (1 - \alpha)_{(j-1)}}{\beta^{i+j} c^{i+j-1}} }{\left( \sum_{j=1}^m \frac{ \Cs(m,j,\beta) (1 - \alpha)_{(j-1)} }{ \beta^j c^{j-1}} \right)^2} - m^2 \\
&= \frac{(1-\beta)_{(2m-1)}}{(1-\beta)_{(m-1)}^2} + \beta - m^2,
\eal
which recovers the single level homozygosity CLT from \cite{Han09}; see Lemma \ref{PYPCLT} below. 
\item By Lemma \ref{limittoStirling}, $\lim_{\beta \to 0} \frac{\Cs(m,j,\beta)}{\beta^j} = {m \brack j}$, an unsigned Stirling number of the first kind. Thus setting $\alpha = 0$ and taking $\beta \to 0$ in Theorem \ref{HPYPCLTLGroups} recovers the CLT for the homozygosity of the HDP with $L$ groups (Theorem 1.4, \cite{FP25}).
\end{enumerate}

\subsection{Outline}

The remaining of the paper is organized as follows. In Section \ref{Sec: Preliminaries}, we introduce auxiliary results and technical lemmas that we use throughout the paper. In Section \ref{Sec: CLTLGroups} we prove Theorem \ref{HPYPCLTLGroups}. 


\section{Preliminaries} \label{Sec: Preliminaries}

\subsection{Auxiliary Results}

Handa's CLT for the homozygosity of the Pitman-Yor process \cite{Han09} will be used throughout the paper. The version we state is from Feng \cite{Fen10}.

\begin{lemma}[Theorem 7.11, \cite{Fen10}] \label{PYPCLT}
Suppose $\bm{X} = (X_1, X_2, \ldots) \sim \PD(\alpha, \theta)$. Let 
\bal
\tilde{H}_m(\alpha, \theta) = \sqrt{\theta}\left( \frac{\theta^{m-1}}{(1 - \alpha)_{(m-1)}}  \sum_{i = 1}^\infty X_i^m - 1 \right), \qquad m \geq 2,
\eal
be the scaled $m$th order homozygosity. Then
\bal
(\tilde{H}_2(\alpha, \theta), \tilde{H}_3(\alpha, \theta), \ldots) \xrightarrow{D} (H_2(\alpha), H_3(\alpha), \ldots)
\eal
as $\theta \to \infty$, where $\bm{H} = (H_2(\alpha), H_3(\alpha), \ldots)$ is a $\R^\infty$-valued random vector such that for all $r \geq 2$, $(H_2(\alpha), \ldots, H_r(\alpha))$ has a multivariate normal distribution with zero mean and covariance matrix
\bal
\cov(H_i(\alpha), H_j(\alpha)) = \frac{(1 - \alpha)_{(i+j-1)}}{(1 - \alpha)_{(i-1)}(1 - \alpha)_{(j-1)}} + \alpha - ij, \qquad i,j = 2, \ldots, r.
\eal
\end{lemma}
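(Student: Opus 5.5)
The plan is to prove the lemma by the method of moments (equivalently, cumulants), using the explicit moment formulas for $\PD(\alpha,\theta)$. Write $\varphi_m=\sum_i X_i^m$ and $\mu_m(\theta)=\frac{(1-\alpha)_{(m-1)}}{\theta^{m-1}}$, so that $\tilde H_m(\alpha,\theta)=\sqrt\theta\,(\varphi_m/\mu_m-1)$. The starting point is the Pitman sampling formula (equivalently, the correlation-function computation of Handa). For integers $m_1,\ldots,m_l\ge 1$ with $n=m_1+\dotsb+m_l$, it gives
\begin{align*}
\E\Big[\sum_{i_1,\ldots,i_l \text{ distinct}} \prod_{k=1}^l X_{i_k}^{m_k}\Big] = \frac{\prod_{k=1}^{l-1}(\theta+k\alpha)\,\prod_{k=1}^l (1-\alpha)_{(m_k-1)}}{(\theta+1)_{(n-1)}}.
\end{align*}
Any product $\prod_{k=1}^r \varphi_{a_k}$ expands, by grouping equal indices, as a finite sum over set partitions $\pi$ of $\{1,\ldots,r\}$. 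Each term is a distinct-index sum with exponents $\sum_{k\in B}a_k$ for the blocks $B\in\pi$. Hence every mixed moment $\E\prod_k\varphi_{a_k}$ is an explicit rational function of $\theta$, and so is every joint cumulant. In particular $\E\varphi_m=(1-\alpha)_{(m-1)}/(\theta+1)_{(m-1)}=\mu_m(1+O(1/\theta))$. After centering at $\E\varphi_m$, the correction $\sqrt\theta\,(\E\varphi_m/\mu_m-1)=O(\theta^{-1/2})$ vanishes in the limit.

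The second step is the covariance. We have $\E[\varphi_i\varphi_j]=\E\varphi_{i+j}+\E\sum_{i_1\neq i_2}X_{i_1}^iX_{i_2}^j$. The distinct-index term equals
\begin{align*}
\frac{(\theta+\alpha)(1-\alpha)_{(i-1)}(1-\alpha)_{(j-1)}}{(\theta+1)_{(i+j-1)}}.
\end{align*}
This has to be expanded to relative order $1/\theta$: $(\theta+1)_{(n-1)}=\theta^{n-1}\bigl(1+\tfrac{n(n-1)}{2\theta}+O(\theta^{-2})\bigr)$. The product $\E\varphi_i\E\varphi_j$ has to be expanded to the same order. The leading terms cancel, and the $1/\theta$ terms give $\theta\,\cov(\varphi_i,\varphi_j)/(\mu_i\mu_j)\to \frac{(1-\alpha)_{(i+j-1)}}{(1-\alpha)_{(i-1)}(1-\alpha)_{(j-1)}}+\alpha-ij$. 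The $\alpha$ comes from the factor $\theta+\alpha$. The $-ij$ comes from $\tfrac{i(i-1)}2+\tfrac{j(j-1)}2-\tfrac{(i+j)(i+j-1)}2$. The $(1-\alpha)_{(i+j-1)}$ term comes from $\E\varphi_{i+j}$. This is a routine check.

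The third and main step is to show that for $r\ge3$ every joint cumulant $\kappa(\varphi_{a_1},\ldots,\varphi_{a_r})$ is $o\big(\theta^{-r/2}\prod_k\mu_{a_k}\big)$. The claim I would prove is that it is in fact $O\big(\theta^{-(r-1)}\prod_k\mu_{a_k}\big)$. Then all cumulants of order $r\ge3$ of $(\tilde H_2,\ldots,\tilde H_r)$ are $O(\theta^{r/2-(r-1)})\to0$. Since the Gaussian limit is determined by its moments, convergence of finite-dimensional distributions follows. Convergence in $\R^\infty$ with the product topology is exactly convergence of all finite-dimensional marginals, which completes the proof.

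To bound the cumulants, I would write each mixed moment as a sum over set partitions $\pi$. The contribution of $\pi$ with $l=|\pi|$ blocks is $\prod_{k=1}^{l-1}(\theta+k\alpha)\prod_{B\in\pi}(1-\alpha)_{(a_B-1)}/(\theta+1)_{(n-1)}$, where $a_B=\sum_{k\in B}a_k$. This is of size $\theta^{\,l-n}$ times a function analytic in $1/\theta$. Relative to $\prod\mu_{a_k}\asymp\theta^{r-n}$, a block structure with $l$ blocks is suppressed by $\theta^{l-r}$. Möbius inversion (moments to cumulants) then expresses the cumulant as an alternating sum. The claim is that the terms of relative order $\theta^{0},\theta^{-1},\ldots,\theta^{-(r-2)}$ cancel; this is the usual ``connectedness'' property. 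I expect this cancellation to be the real obstacle. The cleanest way I would try first is generating functions. The factor $\prod_{k=1}^{l-1}(\theta+k\alpha)/(\theta+1)_{(n-1)}$ can be represented through the Gamma/stable subordinator construction of $\PD(\alpha,\theta)$ (Pitman--Yor: normalized jumps of a subordinator whose total mass is tilted by a $\Gam$-type variable). Conditionally on the total mass, the unnormalized sums $\sum_i J_i^{a}$ are additive functionals of a Poisson point process. Their joint cumulants are exactly single-integral (connected) terms, each of order $\theta$, so they carry no extra cancellation problem. The normalization by the total mass, which is concentrated with fluctuations of relative order $\theta^{-1/2}$, can then be handled by a delta-method argument. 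This reduces the cumulant bound to the Poisson case plus a smooth change of variables.
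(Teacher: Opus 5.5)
The paper does not prove this lemma (it is quoted from \cite{Fen10}, originally \cite{Han09}), so your argument has to stand on its own. Your first two steps are correct. The EPPF formula for distinct-index sums is right, and with $(\theta+1)_{(n-1)}=\theta^{n-1}\bigl(1+\frac{n(n-1)}{2\theta}+O(\theta^{-2})\bigr)$ and $(i+j)(i+j-1)-i(i-1)-j(j-1)=2ij$ you do get $\theta\cov(\varphi_i,\varphi_j)/(\mu_i\mu_j)\to\frac{(1-\alpha)_{(i+j-1)}}{(1-\alpha)_{(i-1)}(1-\alpha)_{(j-1)}}+\alpha-ij$.

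The gap is the third step, which carries the whole content of asymptotic normality. The bound $\kappa(\varphi_{a_1},\ldots,\varphi_{a_r})=O(\theta^{-(r-1)}\prod_k\mu_{a_k})$ is the right one. For $\alpha=0$ and $a_1=a_2=a_3=2$ the EPPF gives exactly $\kappa_3=(32\theta^2-16\theta)/\bigl((\theta+1)^3(\theta+2)(\theta+3)(\theta+4)(\theta+5)\bigr)$, so two orders really do cancel. But the bound is only asserted, and neither route you mention proves it:
\begin{itemize}
\item The usual connectedness argument does not apply directly, because the EPPF weight $\prod_{k=1}^{l-1}(\theta+k\alpha)/(\theta+1)_{(n-1)}$ is not a product over blocks.
\item Conditioning on the total mass destroys the Poisson structure. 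In $\sigma_{\alpha,\theta}(t)=\sigma\bigl(\gamma(1/\alpha)t/(C\Gamma(1-\alpha))\bigr)$ the jumps form a Poisson process only conditionally on the time change $\gamma(1/\alpha)$. The fluctuations of $\gamma(1/\alpha)$ are not negligible: they produce the $+\alpha$ in the covariance.
\item The delta method gives convergence in distribution of $S_a/S_1^a$, not bounds on its joint cumulants. Pushing cumulant bounds through the nonlinear map $(S_a,S_1)\mapsto S_a/S_1^a$ would need a Taylor expansion to order $r$ with control of negative moments of $S_1$, which you do not provide.
\end{itemize}

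The repair is to drop the cumulant bound for the $\varphi_a$ and use the subordinator to prove the CLT directly. Let $S_a=\sum_i J_i^a$ over the jumps of $\sigma$ on $[0,\gamma(1/\alpha)/(C\Gamma(1-\alpha))]$. Conditionally on $\gamma(1/\alpha)$, Campbell's formula gives $\kappa(S_{a_1},\ldots,S_{a_r}\mid\gamma(1/\alpha))=\alpha\gamma(1/\alpha)(1-\alpha)_{(n-1)}$ with $n=\sum_ka_k$. These are linear in $\gamma(1/\alpha)\sim\Gam(\theta/\alpha,1)$, whose cumulants are $(\theta/\alpha)(b-1)!$. By the law of total cumulance, every joint cumulant of the $S_a$ is therefore exactly $\theta$ times a constant. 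Hence the variables $\hat S_a=\sqrt\theta\,\bigl(S_a/(\theta(1-\alpha)_{(a-1)})-1\bigr)$ are centered, have $r$th cumulants $O(\theta^{1-r/2})$, and satisfy $\cov(\hat S_i,\hat S_j)=\frac{(1-\alpha)_{(i+j-1)}}{(1-\alpha)_{(i-1)}(1-\alpha)_{(j-1)}}+\alpha$. In particular $\cov(\hat S_i,\hat S_1)=i$ and $\Var(\hat S_1)=1$.

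Since $(\sum_iX_i^a)_{a\ge2}\stackrel{D}{=}(S_a/S_1^a)_{a\ge2}$, the delta method gives $\tilde H_a(\alpha,\theta)=\hat S_a-a\hat S_1+o_P(1)$. The stated covariance follows, with the $-ij$ coming from the normalization. This is the same architecture the paper uses for the HPYP in Section~\ref{Sec: CLTLGroups}: a conditional CLT given $\bm V,\gamma_{1/\beta}$, the gamma CLT for the time change, and the delta method for division by $\sigma_{\beta,\theta,\ell}$. Your EPPF computation then becomes an independent check of the covariance rather than a needed ingredient.
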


The following combinatorial lemma was used in the introduction to show that the CLT for the homozygosity of the HDP (Theorem 1.4, \cite{FP25}) follows as a special case of Theorem \ref{HPYPCLTLGroups} .

\begin{lemma} \label{limittoStirling}
Let $0 \leq j \leq m$. Then
\bal
\frac{\Cs(m,j,\beta)}{\beta^j} \to {m \brack j}
\eal
as $\beta \to 0$. 
\end{lemma}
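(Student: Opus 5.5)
We prove Lemma \ref{limittoStirling} by expanding the rising factorial in the definition of $\Cs(m,j,\beta)$ as a polynomial in $\beta$ and extracting its lowest-order term. Using $(x)_{(m)} = \sum_{\ell=0}^m {m \brack \ell} x^\ell$ with $x = -k\beta$, we get
\bal
\Cs(m,j,\beta) = \frac{1}{j!}\sum_{k=0}^j (-1)^k\binom{j}{k} \sum_{\ell=0}^m {m \brack \ell} (-1)^\ell k^\ell \beta^\ell = \sum_{\ell=0}^m {m \brack \ell}(-1)^\ell \beta^\ell \cdot \frac{1}{j!}\sum_{k=0}^j (-1)^k \binom{j}{k} k^\ell .
\eal
The inner sum is a finite difference. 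The identity $\sum_{k=0}^j (-1)^{j-k}\binom{j}{k}k^\ell = j!\,{\ell \brace j}$ gives $\frac{1}{j!}\sum_{k}(-1)^k\binom{j}{k}k^\ell = (-1)^j {\ell \brace j}$. Since ${\ell \brace j} = 0$ for $\ell < j$, this vanishing is the key step. Hence
\bal
\Cs(m,j,\beta) = \sum_{\ell=j}^m (-1)^{\ell+j} {m \brack \ell}{\ell \brace j}\beta^\ell .
\eal
Dividing by $\beta^j$, the $\ell=j$ term equals ${m \brack j}{j \brace j} = {m \brack j}$, and every remaining term carries a factor $\beta^{\ell-j}$ with $\ell-j\ge 1$. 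This is a finite polynomial identity, so letting $\beta\to 0$ gives the claim.

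It remains to check the edge cases. For $j=0$, the sum reduces to $(0)_{(m)}$. This equals $1$ if $m=0$ and $0$ otherwise, which matches ${m \brack 0}$. The only genuine input is the finite-difference identity for Stirling numbers of the second kind. It follows from counting surjections by inclusion–exclusion: $j!{\ell\brace j}$ is the number of surjections from an $\ell$-set onto a $j$-set. I expect no real obstacle beyond keeping the signs straight, since $(-1)^k$ combined with $(-k\beta)^\ell$ produces $(-1)^{\ell+j}$ after the finite-difference sign.
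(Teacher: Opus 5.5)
Your proposal is correct and follows essentially the same route as the paper: expand $(-k\beta)_{(m)}$ using unsigned Stirling numbers of the first kind, interchange the sums, and use the finite-difference formula for ${\ell \brace j}$ to get $\Cs(m,j,\beta)=\sum_{\ell=j}^m(-1)^{\ell-j}{m \brack \ell}{\ell \brace j}\beta^\ell$, then read off the lowest-order term. Your explicit note that ${\ell \brace j}=0$ for $\ell<j$, together with the $j=0$ check, makes the final step cleaner than the paper's last display, which contains a spurious extra sum (a typo).
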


\begin{proof}
Recalling that $(x)_{(n)} = \sum_{\ell = 0}^n {n \brack \ell} x^\ell$, we can rewrite
\bal
\Cs(m,j,\beta) &= \frac{1}{j!} \sum_{k=0}^j (-1)^k \binom{j}{k}(-k\beta)_{(m)} = \frac{1}{j!} \sum_{k=0}^j (-1)^k \binom{j}{k} \sum_{\ell = 0}^m {m \brack \ell} (-k\beta)^\ell \\
&= \sum_{\ell = 0}^m {m \brack \ell} (-1)^\ell \beta^\ell \left( \frac{1}{j!} \sum_{k=0}^j (-1)^k \binom{j}{k} k^\ell \right).
\eal 
Using the formula for the Stirling numbers of the second kind, ${n \brace k} = \frac{1}{k!} \sum_{j=0}^k (-1)^{k-j} \binom{k}{j} j^n$, we can write
\bal
\frac{1}{j!} \sum_{k=0}^j (-1)^k \binom{j}{k} k^\ell = (-1)^{-j} {\ell \brace j}.
\eal
Therefore we get
\bal
\Cs(m,j,\beta) = \sum_{\ell = 0}^m (-1)^{\ell-j} {m \brack \ell} {\ell \brace j} \beta^\ell = {m \brack j}\beta^j + \sum_{\ell = j+1}^m(-1)^{\ell-j} {m \brack \ell} {\ell \brace j} \beta^\ell , 
\eal
from which it follows that 
\bal
\frac{\Cs(m,j,\beta)}{\beta^j} = {m \brack j} + \sum_{\ell = j+1}^m {m \brack j}\beta^j + \sum_{\ell = j+1}^m(-1)^{\ell-j} {m \brack \ell} {\ell \brace j} \beta^\ell \to {m \brack j}
\eal
as $\beta \to 0$. 
\end{proof}

\subsection{Subordinator Representation of HPYP} \label{Sec:Subordinatorrep}

Let $\Gamma(x)$ be the {\em gamma function}. Let $\alpha \in (0,1)$, $\theta_0 > -\alpha$, and $C > 0$. Let $\gamma(t)$ be the gamma subordinator with Levy density $\theta_0 x^{-1} e^{-x}$, and independently, let $\sigma(t)$ be the subordinator with Levy density $\alpha x^{-\alpha - 1} e^{-x}$. Define
\bal
\sigma_{\alpha, \theta_0}(t) = \sigma\left( \frac{ \gamma(1/\alpha) t}{C\Gamma(1 - \alpha)} \right).
\eal
and $\sigma_{\alpha, \theta_0} = \sigma_{\alpha, \theta_0}(1)$. Let $J_1(\alpha, \theta_0) \geq J_2(\alpha, \theta_0) \geq \dotsb$ be the ranked jump sizes of $\sigma(t)$ over the random interval $[0, (C\Gamma(1-\alpha))^{-1}\gamma(1/\alpha)]$. Then $\sigma_{\alpha, \theta}$ and $\left( \frac{J_1(\alpha, \theta)}{\sigma_{\alpha, \theta}}, \frac{J_2(\alpha, \theta)}{\sigma_{\alpha, \theta}}, \ldots \right)$
are independent, and have distributions $\Gam(\theta, 1)$ and $\PD(\alpha, \theta)$, respectively. 

The HPYP masses, $\bm{Z} = (Z_1, Z_2, \ldots)$, also admit a subordinator representation. Let $\alpha, \beta \in (0,1)$, $\theta_0 > -\alpha$, and $\theta > -\beta$. Given the level 1 masses, $\bm{V} = (V_1, V_2, \ldots)$, define the increments
\bal
W_k(\alpha, \theta_0, \beta, \theta) := \sigma_{\beta, \theta}\left( \sum_{i=1}^k V_i \right) - \sigma_{\beta, \theta}\left( \sum_{i=1}^{k-1} V_i \right) \stackrel{D}{=} \sigma_{\beta, \theta}\left( V_k \right),
\eal
for all $k \geq 1$. Then
\bal
\bm{Z} = (Z_1, Z_2, \ldots) \stackrel{D}{=} \left( \frac{W_1(\alpha, \theta_0, \beta, \theta)}{\sigma_{\beta,\theta}}, \frac{W_2(\alpha, \theta_0, \beta, \theta)}{\sigma_{\beta,\theta}}, \ldots \right).
\eal

The following lemma establishes the conditional moments of the increments $W_k^m(\alpha, \theta_0, \beta, \theta)$. 

\begin{lemma} \label{incrementsconditionalmean}
For all $m \geq 1$ and $k \geq 1$, 
\bal
\E[W_k^m(\alpha, \theta_0, \beta, \theta) \mid \bm{V}, \gamma_{1/\beta}] &= \sum_{j=1}^m \Cs(m,j,\beta) \gamma_{1/\beta}^j V_k^j, \\
\E[W_k^m(\alpha, \theta_0, \beta, \theta) \mid \bm{V}] &= \sum_{j=1}^m \frac{\Cs(m,j,\beta) \prod_{s=0}^{j-1} (\theta + s\beta)}{\beta^j} V_k^j,
\eal
where $\Cs(m,j,\beta)$ is the generalized factorial coefficient. 
\end{lemma}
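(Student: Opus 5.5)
Conditionally on $\bm{V}$ and $\gamma_{1/\beta}$, the process $\sigma_{\beta,\theta}(t)=\sigma\bigl(\gamma_{1/\beta}\,t/(C\Gamma(1-\beta))\bigr)$ is a subordinator with deterministic time change. The plan is to compute the moments of $W_k$ from its Laplace transform and then average out $\gamma_{1/\beta}$. We normalize the constant as $C=1/\beta$ (so the time scale is $\beta t/\Gamma(1-\beta)$), which is the normalization for which $\sigma_{\beta,\theta}\sim\Gam(\theta,1)$. This normalization should be made explicit and checked as the first step.

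\textbf{Conditional Laplace transform.} By the stationarity of increments,
$W_k \stackrel{D}{=} \sigma(\gamma_{1/\beta} V_k \beta/\Gamma(1-\beta))$ conditionally on $(\bm V,\gamma_{1/\beta})$. Its Laplace exponent is
\[
\frac{\beta t}{\Gamma(1-\beta)}\int_0^\infty (1-e^{-\lambda x})\beta x^{-\beta-1}e^{-x}\,dx .
\]
The integral equals $\Gamma(1-\beta)\bigl((1+\lambda)^\beta-1\bigr)/\beta$, so the exponent is $t\bigl((1+\lambda)^\beta-1\bigr)$ with $t=\gamma_{1/\beta}V_k$. Hence
\[
\E[e^{-\lambda W_k}\mid \bm V,\gamma_{1/\beta}]=\exp\bigl(-t((1+\lambda)^\beta-1)\bigr).
\]
Setting $u=-\lambda$ gives $\E[e^{uW_k}\mid\cdot]=\exp\bigl(t(1-(1-u)^\beta)\bigr)$.

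\textbf{Extracting moments.} We identify $\E[W_k^m\mid\cdot]$ as $m!$ times the coefficient of $u^m$. Expanding the exponential,
\[
\exp\bigl(t(1-(1-u)^\beta)\bigr)=\sum_j \frac{t^j}{j!}\bigl(1-(1-u)^\beta\bigr)^j .
\]
The binomial theorem gives $\bigl(1-(1-u)^\beta\bigr)^j=\sum_{k=0}^j\binom{j}{k}(-1)^k(1-u)^{k\beta}$. The coefficient of $u^m$ in $(1-u)^{k\beta}$ is $(-k\beta)_{(m)}/m!$. Therefore
\[
m!\,[u^m]\,\frac{1}{j!}\bigl(1-(1-u)^\beta\bigr)^j=\Cs(m,j,\beta),
\]
which yields $\E[W_k^m\mid\bm V,\gamma_{1/\beta}]=\sum_{j=0}^m \Cs(m,j,\beta)t^j$. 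The $j=0$ term vanishes for $m\ge1$, since $\Cs(m,0,\beta)=(0)_{(m)}=0$. Terms with $j>m$ also vanish, because $(1-(1-u)^\beta)^j=O(u^j)$. This gives the first formula.

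\textbf{Averaging over $\gamma_{1/\beta}$.} The variable $\gamma_{1/\beta}=\gamma(1/\beta)$ has Lévy density $\theta x^{-1}e^{-x}$ (here $\theta$ plays the role of $\theta_0$, since we are at level two), so it is $\Gam(\theta/\beta,1)$. Its moments are
\[
\E\gamma_{1/\beta}^j=\Bigl(\frac{\theta}{\beta}\Bigr)_{(j)}=\frac{\prod_{s=0}^{j-1}(\theta+s\beta)}{\beta^j}.
\]
This uses independence of $\gamma$ from $\sigma$ and from $\bm V$; the level-two subordinators are built independently of the level-one masses. Taking conditional expectation given $\bm V$ then gives the second formula.

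\textbf{Main obstacle.} The only delicate point is the constant $C$ and the exact convention for $\gamma_{1/\beta}$. The stated formula forces $\E\gamma_{1/\beta}^j=(\theta/\beta)_{(j)}$ and a Laplace exponent exactly $t((1+\lambda)^\beta-1)$. If the paper's $C$ differs, a factor $(\beta C)^{-j}$ would appear, so I would fix $C=1/\beta$ consistent with $\sigma_{\beta,\theta}\sim\Gam(\theta,1)$.

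The remaining steps are routine. Justifying the term-by-term coefficient extraction is legitimate because the moment generating function is analytic near $u=0$: $W_k$ has exponential moments of order below $1$ thanks to the $e^{-x}$ tilt.
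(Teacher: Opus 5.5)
Your route to the coefficients is correct and differs from the paper's. The paper differentiates $\exp(-\gamma_{1/\beta}V_k[(1+\lambda)^\beta-1])$ $m$ times with Fa\`a di Bruno's formula and groups the partition sum by the number of blocks $j$. It then invokes Charalambides' Theorems 2.14 and 2.15 to recognize that sum as $(-1)^{m+j}\tilde{\Cs}(m,j,\beta)=\Cs(m,j,\beta)$. You instead expand $\exp(t(1-(1-u)^\beta))=\sum_j \frac{t^j}{j!}(1-(1-u)^\beta)^j$ and read off the coefficient of $u^m$ with the binomial theorem. This lands directly on the defining alternating sum for $\Cs(m,j,\beta)$. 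It is shorter and needs no external combinatorial identity; in effect you prove $\frac{1}{j!}(1-(1-u)^\beta)^j=\sum_m \Cs(m,j,\beta)u^m/m!$ along the way. The paper's version additionally exhibits the partition-sum form of the coefficients. The final averaging over $\gamma_{1/\beta}\sim\Gam(\theta/\beta,1)$ is the same in both.

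However, the normalization step, which you flagged as the delicate point, contains an arithmetic error. With L\'evy density $\beta x^{-\beta-1}e^{-x}$,
\[
\int_0^\infty (1-e^{-\lambda x})\,\beta x^{-\beta-1}e^{-x}\,dx=\Gamma(1-\beta)\left((1+\lambda)^\beta-1\right),
\]
not $\Gamma(1-\beta)((1+\lambda)^\beta-1)/\beta$. The factor $1/\beta$ in $\int_0^\infty(1-e^{-\lambda x})x^{-\beta-1}e^{-x}\,dx=\Gamma(1-\beta)((1+\lambda)^\beta-1)/\beta$ cancels against the $\beta$ in the density.

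So with your choice $C=1/\beta$ the conditional exponent is really $\beta\gamma_{1/\beta}V_k((1+\lambda)^\beta-1)$, and your first formula would pick up an extra factor $\beta^j$. Also $\sigma_{\beta,\theta}$ would then have mean $\beta\theta$ and Laplace transform $(1+\beta((1+\lambda)^\beta-1))^{-\theta/\beta}$, so it is not $\Gam(\theta,1)$. In general the moments carry a factor $C^{-j}$, not $(\beta C)^{-j}$.

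The normalization that actually gives $\sigma_{\beta,\theta}\sim\Gam(\theta,1)$ and the exponent $t((1+\lambda)^\beta-1)$ is $C=1$. Alternatively, do as the paper's proof implicitly does and take the L\'evy density of $\sigma$ to be $C\beta x^{-\beta-1}e^{-x}$, so that $C$ cancels. With that correction the rest of your argument goes through unchanged.
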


\begin{proof}
By direct computation, the Laplace transforms of $\gamma(t)$ and $\sigma(t)$ are
\bal
\E\left[ e^{-\lambda \gamma(t)} \right] = \exp\left( -t\theta \int_0^\infty (1 - e^{-\lambda x}) x^{-1} e^{-x} \, dx \right) = (1 + \lambda)^{-\theta t}
\eal
and
\bal
\E\left[ e^{-\lambda \sigma(t)} \right] &= \exp\left( -t \alpha C \int_0^\infty (1 - e^{-\lambda x}) x^{-(1+\alpha)} e^{-x} \, dx \right) = \exp(-tC\Gamma(1-\alpha)[(\lambda + 1)^\alpha - 1]),
\eal
respectively. 
Thus the conditional Laplace transform of $\sigma_{\beta, \theta}(V_k)$ given $\bm{V}$ and $\gamma_{1/\beta}$ is
\bal
\E\left[ e^{-\lambda \sigma_{\beta, \theta}(V_k)} \middle\vert \bm{V}, \gamma_{1/\beta} \right] = \exp(-\gamma_{1/\beta}V_k[(\lambda+1)^\beta - 1]).
\eal
The conditional moments of $W_k^m(\alpha, \theta_0, \beta, \theta)$ can be computed as
\bal
\E[W_k^m(\alpha, \theta_0, \beta, \theta) \mid \bm{V}, \gamma_{1/\beta}] &= \E[\sigma_{\beta, \theta}(V_k)^m \mid \bm{V}, \gamma_{1/\beta}] = \left. (-1)^m \frac{d^m}{d\lambda^m} \E\left[ e^{-\lambda \sigma_{\beta, \theta}(V_k)} \middle\vert \bm{V}, \gamma_{1/\beta} \right] \right|_{\lambda = 0} \\
&= \left. (-1)^m \frac{d^m}{d\lambda^m} \exp(-\gamma_{1/\beta}V_k[(\lambda+1)^\beta - 1]) \right|_{\lambda = 0}.
\eal
Let $\exp(-\gamma_{1/\beta}V_k[(\lambda+1)^\beta - 1]) = f(u(\lambda))$ where $f(u) = e^u$ and $u(\lambda) = -\gamma_{1/\beta}V_k[(\lambda+1)^\beta - 1]$. Then by Faa di Bruno's formula, 
\bal
&(-1)^m \frac{d^m}{d\lambda^m} \exp(-\gamma_{1/\beta}V_k[(\lambda+1)^\beta - 1]) = (-1)^m \frac{d^m f(u(\lambda))}{d \lambda^m} \\
&\qquad = (-1)^m \sum_{ \sum_{i=1}^m ik_i = m } \frac{m!}{\prod_{i=1}^m k_i! i!^{k_i}} \cdot f^{(k_1 + \dotsb + k_m)}(u(\lambda)) \cdot \prod_{i=1}^m (u^{(i)}(\lambda))^{k_i}.
\eal
By direct computation,
\bal
\left. \frac{d^j f(u)}{du^j} \right|_{\lambda = 0} &= \left. e^u \right|_{\lambda = 0} = 1, \\
\left. \frac{d^j u(\lambda)}{d\lambda^j} \right|_{\lambda = 0} &= \left. -\gamma_{1/\beta} V_k \beta(\beta - 1) \dotsb (\beta - j + 1) (\lambda + 1)^{\beta - j} \right|_{\lambda = 0} = -\gamma_{1/\beta} V_k (\beta)_{j}
\eal
for all $1 \leq j \leq m$, where $(x)_n = x(x-1)(x-2)\dotsb (x-n+1)$ is the {\em falling factorial}. Combining the above gives 
\bal
\E[\sigma_{\beta, \theta}(V_k)^m \mid \bm{V}] &= \left. (-1)^m \sum_{ \sum_{i=1}^m ik_i = m } \frac{m!}{\prod_{i=1}^m k_i! i!^{k_i}} \cdot f^{(k_1 + \dotsb + k_m)}(u(\lambda)) \cdot \prod_{i=1}^m (u^{(i)}(\lambda))^{k_i} \right|_{\lambda = 0} \\
&= \sum_{j=1}^m \left[ \sum_{ \substack{ \sum_{i=1}^m ik_i = m \\ \sum_{i=1}^m k_i = j} } m! \left(\prod_{i=1}^m \frac{[(\beta)_i]^{k_i}}{k_i! i!^{k_i}}\right) \right] \cdot (-1)^{m+j} \gamma_{1/\beta}^j V_k^j \\
&= \sum_{j=1}^m (-1)^{m+j} \tilde{\Cs}(m,j,\beta) \gamma_{1/\beta}^j V_k^j,
\eal
where in the last equality we applied Theorems 2.14 and 2.15 in \cite{Cha05}, which combined states that
\bal
\tilde{\Cs}(m,j,\beta) := \frac{1}{j!} \sum_{k=0}^j (-1)^{j-k} \binom{j}{k} (\beta k)_{m} &= \sum_{ \substack{ \sum_{i=1}^m ik_i = m \\ \sum_{i=1}^m k_i = j} } \frac{m!}{\prod_{i=1}^m k_i!} \prod_{i=1}^m \binom{\beta}{i}^{k_i}.
\eal
Using the identity $(-k\beta)_{(m)} = (-1)^m (k\beta)_m$, which follows from the fact that the rising and falling factorials are related to one another through the identity $(x)_{(n)} = (-1)^n (-x)_{n}$, we get that
\bal
(-1)^{m+j} \tilde{\Cs}(m,j,\beta) = \frac{1}{j!} \sum_{k=0}^j (-1)^{m-k} \binom{j}{k} (\beta k)_{m} = \frac{1}{j!} \sum_{k=0}^j (-1)^k \binom{j}{k} (-k \beta)_{(m)} =: \Cs(m,j,\beta), 
\eal
from which it follows that 
\bal
\E[W_k^m(\alpha, \theta_0, \beta, \theta) \mid \bm{V}, \gamma_{1/\beta}] &= \sum_{j=1}^m \Cs(m,j,\beta) \gamma_{1/\beta}^j V_k^j.
\eal
Finally by direct computation and using the properties of gamma moments,
\bal
\E[W_k^m(\alpha, \theta_0, \beta, \theta) \mid \bm{V}] &= \sum_{j=1}^m \Cs(m,j,\beta) \E[\gamma_{1/\beta}^j] V_k^j = \sum_{j=1}^m \frac{\Cs(m,j,\beta) \prod_{s=0}^{j-1} (\theta + s\beta)}{\beta^j} V_k^j. \qedhere
\eal
\end{proof}

\subsection{Some Technical Lemmas}

In this section, we establish some technical lemmas that will be used in the sequel.

\begin{lemma} \label{level1weightmoments}
For all $k \geq 1$ and $j \geq 0$, 
\bal
\E[V_k^j] = \frac{(1 - \alpha)_{(j)}}{(1 + \theta)_{(j)}} \prod_{s = 1}^{k-1} \frac{\theta + s \alpha}{\theta + s\alpha + j}.
\eal
\end{lemma}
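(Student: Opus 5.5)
We compute the moment directly from the stick-breaking construction, using independence of the $U_i$. Here $V_k = U_k\prod_{i=1}^{k-1}(1-U_i)$ with the $U_i$ independent and $U_i\sim\Beta(1-\alpha,\theta+i\alpha)$. Hence
\bal
\E[V_k^j] = \E[U_k^j]\prod_{i=1}^{k-1}\E[(1-U_i)^j].
\eal
The lemma concerns the size-biased (unranked) stick-breaking weights $V_k$, not the ranked ones, so this factorization is exactly the right starting point. For $j=0$ both sides equal $1$, so we assume $j\ge 1$.

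\textbf{Step 1 (Beta moments).} We use the standard formulas for $U\sim\Beta(a,b)$:
\bal
\E[U^j]=\frac{(a)_{(j)}}{(a+b)_{(j)}}, \qquad \E[(1-U)^j]=\frac{(b)_{(j)}}{(a+b)_{(j)}}.
\eal
For $U_k$ we have $a=1-\alpha$ and $a+b=1+\theta+(k-1)\alpha$, so
\bal
\E[U_k^j]=\frac{(1-\alpha)_{(j)}}{(1+\theta+(k-1)\alpha)_{(j)}}.
\eal
For $U_i$ we have $a+b=1+\theta+(i-1)\alpha$, so
\bal
\E[(1-U_i)^j]=\frac{(\theta+i\alpha)_{(j)}}{(\theta+(i-1)\alpha+1)_{(j)}}.
\eal

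\textbf{Step 2 (telescoping).} Write $x_s=\theta+s\alpha$. Then the product becomes
\bal
\E[V_k^j]=(1-\alpha)_{(j)}\,\frac{1}{(x_{k-1}+1)_{(j)}}\prod_{i=1}^{k-1}\frac{(x_i)_{(j)}}{(x_{i-1}+1)_{(j)}}.
\eal
Use the identity $(x)_{(j)}=x\,(x+1)_{(j)}/(x+j)$, i.e. $(x+1)_{(j)}=(x)_{(j)}(x+j)/x$. This rewrites each factor as
\bal
\frac{(x_i)_{(j)}}{(x_{i-1}+1)_{(j)}} = \frac{(x_i)_{(j)}}{(x_{i-1})_{(j)}}\cdot\frac{x_{i-1}}{x_{i-1}+j}.
\eal
It also gives $(x_{k-1}+1)_{(j)}=(x_{k-1})_{(j)}(x_{k-1}+j)/x_{k-1}$. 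The ratios $(x_i)_{(j)}/(x_{i-1})_{(j)}$ telescope to $(x_{k-1})_{(j)}/(x_0)_{(j)}$, and this cancels against the leading factor. What remains is
\bal
\E[V_k^j]=\frac{(1-\alpha)_{(j)}}{(\theta)_{(j)}}\cdot\frac{x_{k-1}}{x_{k-1}+j}\prod_{i=1}^{k-1}\frac{x_{i-1}}{x_{i-1}+j}
=\frac{(1-\alpha)_{(j)}}{(\theta)_{(j)}}\prod_{s=0}^{k-1}\frac{\theta+s\alpha}{\theta+s\alpha+j}.
\eal

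\textbf{Step 3 (matching the stated form).} Pull out the $s=0$ factor $\theta/(\theta+j)$. Since $(\theta)_{(j)}(\theta+j)/\theta=(1+\theta)_{(j)}$, this gives exactly the claimed formula:
\bal
\E[V_k^j]=\frac{(1-\alpha)_{(j)}}{(1+\theta)_{(j)}}\prod_{s=1}^{k-1}\frac{\theta+s\alpha}{\theta+s\alpha+j}.
\eal
As a check, $k=1$ gives $\E[U_1^j]$ directly.

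The only real obstacle is bookkeeping in the telescoping: the denominators $(x_{i-1}+1)_{(j)}$ are shifted by one relative to the numerators $(x_i)_{(j)}$. A safer route is induction on $k$. Multiplying the formula for $\E[V_k^j]$ by
\bal
\frac{\E[U_{k+1}^j]\,\E[(1-U_k)^j]}{\E[U_k^j]}
\eal
should produce exactly the extra factor $x_k/(x_k+j)$. This follows from the same identity $(x+1)_{(j)}=(x)_{(j)}(x+j)/x$ and is a one-line verification.
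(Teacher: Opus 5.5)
Your proof is correct and follows essentially the same route as the paper: factor by independence, apply the Beta moment formulas, and cancel the shifted rising factorials. The only difference is that the paper regroups the denominators as $(1+\theta)_{(j)}\prod_{s=1}^{k-1}(1+\theta+s\alpha)_{(j)}$ and pairs each with $(\theta+s\alpha)_{(j)}$ instead of telescoping. One caveat: your telescoping divides by $(\theta)_{(j)}$, which vanishes at the admissible value $\theta=0$, so the direct regrouping or your induction step (which never divides by $(\theta)_{(j)}$) is the cleaner way to write it.
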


\begin{proof}
Using the fact that $\{U_k\}$ are independent random variables such that $U_k \sim \Beta(1 - \alpha, \theta + k\alpha)$, a direct computation yields
\bal
\E[V_k^j] &= \E\left[U_k^j \prod_{s=1}^{k-1} (1 - U_s)^j\right] = \prod_{r=0}^{j-1} \frac{1 - \alpha + r}{1 - \alpha + \theta + s\alpha + r} \prod_{s=1}^{k-1} \prod_{r=0}^{j-1} \frac{\theta + s\alpha + r}{1 - \alpha + \theta + s\alpha + r} \\
&= \frac{(1- \alpha)_{(j)}}{(1 + \theta + \alpha(k-1))_{(j)}} \prod_{s=1}^{k-1} \frac{(\theta + s\alpha)_{(j)}}{(1 + \theta + \alpha(s-1))_{(j)}} = \frac{(1- \alpha)_{(j)} \prod_{s=1}^{k-1} (\theta + s\alpha)_{(j)}}{\prod_{s=1}^{k} (1 + \theta + \alpha(s-1))_{(j)}} \\
&= \frac{(1 - \alpha)_{(j)}}{(1 + \theta)_{(j)}} \prod_{s=1}^{k-1} \frac{(\theta + s\alpha)_{(j)}}{(1 + \theta + s\alpha)_{(j)}} = \frac{(1 - \alpha)_{(j)}}{(1 + \theta)_{(j)}} \prod_{s = 1}^{k-1} \frac{\theta + s \alpha}{\theta + s\alpha + j}. \qedhere
\eal
\end{proof}

\begin{lemma} \label{level1weightspartialsum}
Let $k \geq 1$ and $j \geq 0$. Then for all $N \geq 1$,
\bal
\sum_{k=1}^N \E[V_k^j] = \frac{(1 - \alpha)_{(j-1)} \left(\theta + j - (\theta + N\alpha) \prod_{s = 1}^{N-1} \frac{\theta + s \alpha}{\theta + s\alpha + j} \right)}{(1 + \theta)_{(j)}}.
\eal
\end{lemma}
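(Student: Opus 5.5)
The plan is a telescoping argument in $N$. Write $a_k := \E[V_k^j]$. By Lemma \ref{level1weightmoments}, $a_k = \frac{(1-\alpha)_{(j)}}{(1+\theta)_{(j)}} P_{k-1}$, where
\[
P_{n} := \prod_{s=1}^{n} \frac{\theta+s\alpha}{\theta+s\alpha+j}, \qquad P_0 = 1.
\]
Denote by $R_N$ the claimed right-hand side. Since $(1-\alpha)_{(j)} = (1-\alpha)_{(j-1)}(j-1+\alpha)$, it can be written as
\[
R_N = \frac{(1-\alpha)_{(j-1)}}{(1+\theta)_{(j)}} \bigl(\theta + j - (\theta+N\alpha) P_{N-1}\bigr).
\]
It therefore suffices to check the base case $N=1$ and the increment identity $R_{N+1} - R_N = a_{N+1}$. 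The identity $\sum_{k=1}^N a_k = R_N$ then follows by induction on $N$.

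\textbf{Base case.} Here $P_0 = 1$, so
\[
R_1 = \frac{(1-\alpha)_{(j-1)}(\theta + j - \theta - \alpha)}{(1+\theta)_{(j)}} = \frac{(1-\alpha)_{(j-1)}(j-\alpha)}{(1+\theta)_{(j)}}.
\]
This should equal $a_1 = \frac{(1-\alpha)_{(j)}}{(1+\theta)_{(j)}}$. Since $(1-\alpha)_{(j)} = (1-\alpha)_{(j-1)}(1-\alpha+j-1) = (1-\alpha)_{(j-1)}(j-\alpha)$, the two agree.

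\textbf{Inductive step.} After factoring out $\frac{(1-\alpha)_{(j-1)}}{(1+\theta)_{(j)}}$, we need
\[
(\theta+N\alpha)P_{N-1} - (\theta+(N+1)\alpha)P_N = (j-\alpha)P_N .
\]
Using $P_N = P_{N-1}\,\frac{\theta+N\alpha}{\theta+N\alpha+j}$, we get $(\theta+N\alpha)P_{N-1} = (\theta+N\alpha+j)P_N$. The left side then becomes
\[
\bigl(\theta+N\alpha+j-\theta-(N+1)\alpha\bigr)P_N = (j-\alpha)P_N,
\]
as required. The right side is exactly $a_{N+1}$ times the factored constant, because
\[
a_{N+1} = \frac{(1-\alpha)_{(j-1)}(j-\alpha)}{(1+\theta)_{(j)}}\,P_N .
\]

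\textbf{Edge case $j=0$.} This is the only point needing care, since $(1-\alpha)_{(-1)}$ must be interpreted. The natural convention is $(x)_{(-1)} = 1/(x-1)$, so $(1-\alpha)_{(-1)} = -1/\alpha$. With $j=0$ every factor of $P_{N-1}$ equals $1$, and
\[
R_N = -\frac{1}{\alpha}(\theta - \theta - N\alpha) = N,
\]
which matches $\sum_{k=1}^N 1 = N$. Alternatively, one can simply restrict the statement to $j\ge 1$, which is the case used later.

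The main obstacle is essentially nil. The only step requiring attention is the rewriting $(1-\alpha)_{(j)} = (1-\alpha)_{(j-1)}(j-\alpha)$, which makes the telescoping visible.
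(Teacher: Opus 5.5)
Your proof is correct and is essentially the paper's argument. Both rest on the one-step relation $(\theta+N\alpha)P_{N-1}=(\theta+N\alpha+j)P_N$ together with $(1-\alpha)_{(j)}=(1-\alpha)_{(j-1)}(j-\alpha)$. The paper uses these to derive the closed form by summation by parts, solving a linear equation for $\sum_{k=1}^N A_k$; you instead verify the stated closed form by induction on $N$.

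One small advantage of your version is that it never divides by $j$, whereas the paper's proof does. So with the convention $(1-\alpha)_{(-1)}=-1/\alpha$, your argument also covers the $j=0$ case that the statement nominally includes.
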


\begin{proof}
Let $A_k = \prod_{s = 1}^{k-1} \frac{\theta + s \alpha}{\theta + s\alpha + j}$ for all $k \geq 1$, so that 
\bal
A_{k+1} = A_k\left( \frac{\theta + k\alpha}{\theta + k\alpha + j} \right) \implies A_k = \frac{\theta + k\alpha + j}{j}(A_k - A_{k+1}). 
\eal
Summing both sides over $k$ gives
\bal
\sum_{k=1}^N A_k &= \frac{1}{j} \sum_{k=1}^N (\theta + k\alpha + j)(A_k - A_{k+1}) = \frac{1}{j}\left[(\theta + \alpha + j) + \alpha \sum_{k=2}^N A_k - (\theta + N\alpha + j) A_{N+1} \right] \\
&= \frac{\alpha}{j} \sum_{k=1}^N A_k + \frac{\theta + j - (\theta + N\alpha + j) A_{N+1} }{j},
\eal
and rearranging yields
\bal
\sum_{k=1}^N A_k = \frac{\theta + j - (\theta + N\alpha + j) A_{N+1}}{j-\alpha}. 
\eal
Finally by Lemma \ref{level1weightmoments}, 
\bal
\sum_{k=1}^N \E[V_k^j] &= \frac{(1 - \alpha)_{(j)}}{(1 + \theta)_{(j)}} \sum_{k=1}^N A_k = \frac{(1 - \alpha)_{(j)}}{(1 + \theta)_{(j)}} \cdot \frac{\theta + j - (\theta + N\alpha + j) A_{N+1}}{j-\alpha} \\
&= \frac{(1 - \alpha)_{(j-1)} \left(\theta + j - (\theta + N\alpha) \prod_{s = 1}^{N-1} \frac{\theta + s \alpha}{\theta + s\alpha + j} \right)}{(1 + \theta)_{(j)}}. \qedhere
\eal
\end{proof}

\begin{lemma}\label{asconvHomozyg}
For all $j \geq 1$, 
\bal
\frac{\theta^{j-1}}{(1 - \alpha)_{(j-1)}} H_j(\alpha, \theta) \xrightarrow{a.s.} 1
\eal
Moreover this almost sure convergence also holds if $H_j(\alpha, \theta)$ is replaced by $\sum_{k=1}^{\lfloor \theta^r \rfloor} V_k^j$, where $r$ is an integer such that $r = r(\alpha) = \lfloor \frac{1}{1-\alpha} \rfloor + 1$. 
\end{lemma}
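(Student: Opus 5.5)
The plan is to prove the statement along an arbitrary sequence $\theta=\theta_n\to\infty$ with $\sum_n \theta_n^{-2}<\infty$ and $\sum_n\theta_n^{-(1+\delta)}<\infty$ for some $\delta>0$ (for instance $\theta_n=n$), which is how I would read the almost sure statement. The argument is a moment computation followed by Borel--Cantelli. I will first treat the full sum $H_j(\alpha,\theta)$ and then the truncated sum. The case $j=1$ is trivial, since $H_1=1$, so assume $j\ge 2$ throughout.

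\emph{Step 1 (mean).} Let $N\to\infty$ in Lemma \ref{level1weightspartialsum}. The product $\prod_{s=1}^{N-1}\frac{\theta+s\alpha}{\theta+s\alpha+j}$ behaves like $N^{-j/\alpha}$, so the term $(\theta+N\alpha)\prod_{s=1}^{N-1}(\cdots)$ tends to $0$ because $j/\alpha>1$. This gives
\[
\E[H_j(\alpha,\theta)]=\frac{(1-\alpha)_{(j-1)}(\theta+j)}{(1+\theta)_{(j)}},
\]
and hence $\frac{\theta^{j-1}}{(1-\alpha)_{(j-1)}}\E[H_j]=1+O(\theta^{-1})$.

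\emph{Step 2 (fluctuations).} Set $Y_\theta=\frac{\theta^{j-1}}{(1-\alpha)_{(j-1)}}H_j-1$. I want $\E[Y_\theta^4]=O(\theta^{-2})$. Expanding $Y_\theta^4$ requires the mixed moments $\E[H_j^p]$ for $p\le 4$. Each is a finite sum over ways of merging indices: $\E[\sum_{i_1,\dots,i_p}V_{i_1}^j\cdots V_{i_p}^j]$ splits into sums over distinct indices, and these are given exactly by the Pitman sampling formula (the $\PD(\alpha,\theta)$ EPPF),
\[
\E\Big[\sum_{\text{distinct}}\prod_{l=1}^q V_{i_l}^{n_l}\Big]=\frac{\prod_{l=1}^{q-1}(\theta+l\alpha)}{(\theta+1)_{(n-1)}}\prod_{l=1}^q(1-\alpha)_{(n_l-1)},\qquad n=\sum_l n_l .
\]
Each such term is a rational function of $\theta$. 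Expanding in powers of $1/\theta$, the orders $\theta^0$ and $\theta^{-1}$ cancel in the centered fourth moment, exactly as for a normalized sum of weakly dependent terms; this is consistent with the $O(1/\theta)$ variance visible in Lemma \ref{PYPCLT}. Markov's inequality then gives $P(|Y_{\theta_n}|>\epsilon)\le C\epsilon^{-4}\theta_n^{-2}$, which is summable, and Borel--Cantelli yields the first claim. This cancellation bookkeeping is the main obstacle. To keep it manageable, I would first try to track only leading orders, organizing the expansion by set partitions of the four copies of $H_j$.

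\emph{Step 3 (truncation).} Let $N=\lfloor\theta^r\rfloor$. The tail $R=\sum_{k>N}V_k^j$ is nonnegative, so it suffices to show $\theta^{j-1}R\to0$ a.s. By Step 1 and Lemma \ref{level1weightspartialsum},
\[
\E[R]=\frac{(1-\alpha)_{(j-1)}(\theta+N\alpha)}{(1+\theta)_{(j)}}\prod_{s=1}^{N-1}\frac{\theta+s\alpha}{\theta+s\alpha+j}.
\]
Using $\log\prod_{s=1}^{N-1}(\cdots)\approx -\frac{j}{\alpha}\log\frac{\theta+N\alpha}{\theta}$, the product is of order $\theta^{-(r-1)j/\alpha}$. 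Hence
\[
\theta^{j-1}\E[R]\asymp \theta^{\,r-1-(r-1)j/\alpha}=\theta^{-(r-1)(j/\alpha-1)}.
\]
Since $j\ge2$, we have $j/\alpha-1\ge(1-\alpha)/\alpha$. The choice $r-1>\alpha/(1-\alpha)$, guaranteed by $r=\lfloor\frac1{1-\alpha}\rfloor+1$, makes the exponent strictly greater than $1$. Markov's inequality and Borel--Cantelli along $\theta_n$ then give $\theta^{j-1}R\to0$ a.s. Combined with Step 2, this proves the claim for $\sum_{k\le\lfloor\theta^r\rfloor}V_k^j$.
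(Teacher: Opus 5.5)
Your route differs from the paper's. The paper computes only the second moment. It gets $\E[(X_\theta-1)^2]\asymp\theta^{-1}$ for $X_\theta=\frac{\theta^{j-1}}{(1-\alpha)_{(j-1)}}H_j(\alpha,\theta)$, applies Chebyshev and Borel--Cantelli along $\theta_n=\lfloor a^n\rfloor$, and passes to general $\theta$ by a squeeze between $\theta_n$ and $\theta_{n+1}$. The tail of the truncated sum is simply asserted to vanish. You instead aim for a fourth-moment bound, so that Borel--Cantelli works along any sequence with $\sum_n\theta_n^{-2}<\infty$ (e.g.\ the integers) with no interpolation, and you give an explicit first-moment estimate for the tail. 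The trade-off is this. The paper's squeeze step tacitly needs a coupling of the laws $\PD(\alpha,\theta)$ across $\theta$ under which the relevant quantities are monotone, and no such coupling is supplied. Your Borel--Cantelli argument is valid for any coupling, at the price of a sequential reading of the statement and a harder moment computation. The later uses in Lemmas \ref{homozygosityLgroupsLevel2.1CLT} and \ref{homozygosityLgroupsLevel2.2CLT} only need convergence in probability, which already follows from the second moment.

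The weak point is Step 2. The bound $\E[Y_\theta^4]=O(\theta^{-2})$ is asserted, and consistency with Lemma \ref{PYPCLT} does not imply it, since convergence in distribution gives no moment bounds. The bound is true, and the cancellation is quick once organized by set partitions as you suggest. By the EPPF, a partition of $\{1,\dots,p\}$ into $q$ blocks contributes order $\theta^{q-p}$ to $\E[X_\theta^p]$. So only the all-singleton partition and the $\binom{p}{2}$ partitions with a single pair matter to order $\theta^{-1}$:
\[
\E[X_\theta^p]=1+\frac{a_p}{\theta}+O(\theta^{-2}),\qquad a_p=\alpha\binom{p}{2}-\binom{pj}{2}+\binom{p}{2}\frac{(1-\alpha)_{(2j-1)}}{(1-\alpha)_{(j-1)}^2}.
\]
Since $a_p$ is quadratic in $p$, $\sum_{p=0}^{4}\binom{4}{p}(-1)^{p}a_p=0$. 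This is exactly the vanishing of the $\theta^{-1}$ term in $\E[(X_\theta-1)^4]$. The same expansion gives $\E[(X_\theta-1)^2]=\theta^{-1}\bigl(\frac{(1-\alpha)_{(2j-1)}}{(1-\alpha)_{(j-1)}^2}+\alpha-j^2\bigr)+O(\theta^{-2})$, in agreement with Lemma \ref{PYPCLT}. With this inserted, your Steps 1--3 go through.

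One small correction: setting $j=1$ aside as trivial covers only the untruncated statement. The truncated case $\sum_{k\le\lfloor\theta^r\rfloor}V_k\to1$ is used in Lemma \ref{homozygosityLgroupsLevel2.2CLT}, and it is precisely the case where the choice of $r$ matters; for $j\ge2$ any integer $r\ge2$ would do. Your Step 3 handles it verbatim, because the bound $j/\alpha-1\ge(1-\alpha)/\alpha$ you use is an equality at $j=1$.
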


\begin{proof}
First recall that 
\bal
\E[H_j(\alpha, \theta)] = \frac{(1 - \alpha)_{(j-1)}}{(1 + \theta)_{(j-1)}}.
\eal
Next we compute the second moment $\E[H_j^2(\alpha, \theta)]$. By direct computation, 
\bal
\E[V_k^{2j}] = \E\left[U_k^{2j} \prod_{s = 1}^{k-1} (1 - U_s)^{2j}\right] = \frac{(1 - \alpha)_{(2j)}}{(1 + \theta + \alpha(k-1))_{(2j)}} \prod_{s = 1}^{k-1} \frac{(\theta + s\alpha)_{(2j)}}{(1 + \theta + \alpha(s-1))_{(2j)}},
\eal
and for $k < l$,
\bal
\E[V_k^j V_l^j] &= \E\left[ U_k^j \prod_{s = 1}^{k-1} (1 - U_s)^j U_l^j \prod_{r = 1}^{l-1} (1 - U_r)^j \right] = \E\left[ U_k^j (1 - U_k)^j U_l^j \prod_{s = 1}^{k-1} (1 - U_s)^{2j} \prod_{s = k+1}^{l-1} (1 - U_s)^j \right] \\ 
&= \frac{(1 - \alpha)_{(j)} (\theta + k\alpha)_{(j)}}{(1 + \theta + \alpha(k-1))_{(2j)}} \frac{(1 - \alpha)_{(j)}}{(1 + \theta + \alpha(l-1))_{(j)}} \prod_{s = 1}^{k-1} \frac{(\theta + s\alpha)_{(2j)}}{(1 + \theta + \alpha(s-1))_{(2j)}} \prod_{s = k+1}^{l-1} \frac{(\theta + s\alpha)_{(j)}}{(1 + \theta + \alpha(s-1))_{(j)}}. 
\eal
Thus combining the above yields
\bal
\E[H_j^2(\alpha, \theta)] &= \sum_{k,l = 1}^\infty \E[V_k^j V_l^j] = \sum_{k=1}^\infty \E[V_k^{2j}] + 2\sum_{1 \leq k < l \leq \infty} \E[V_k^j V_l^j] \\
&= \sum_{k=1}^\infty \frac{(1 - \alpha)_{(2j)}}{(1 + \theta + \alpha(k-1))_{(2j)}} \prod_{s = 1}^{k-1} \frac{(\theta + s\alpha)_{(2j)}}{(1 + \theta + \alpha(s-1))_{(2j)}} \\
&\qquad \qquad + 2\sum_{1 \leq k < l \leq \infty} \left[ \frac{(1 - \alpha)_{(j)} (\theta + k\alpha)_{(j)}}{(1 + \theta + \alpha(k-1))_{(2j)}} \frac{(1 - \alpha)_{(j)}}{(1 + \theta + \alpha(l-1))_{(j)}} \right. \\
&\qquad \qquad \left. \times \prod_{s = 1}^{k-1} \frac{(\theta + s\alpha)_{(2j)}}{(1 + \theta + \alpha(s-1))_{(2j)}} \prod_{s = k+1}^{l-1} \frac{(\theta + s\alpha)_{(j)}}{(1 + \theta + \alpha(s-1))_{(j)}} \right].
\eal
By direct computation, 
\bal
\E\left[ \left( \frac{\theta^{j-1}}{(1 - \alpha)_{(j-1)}} H_j(\alpha, \theta) - 1 \right)^2 \right] &= \Var\left( \frac{\theta^{j-1}}{(1 - \alpha)_{(j-1)}} H_j(\alpha, \theta) \right) + \left( \E\left[ \frac{\theta^{j-1}}{(1 - \alpha)_{(j-1)}} H_j(\alpha, \theta) \right] - 1 \right)^2 \\
&= \frac{1}{(1 - \alpha)\theta} + O(\theta^{-2}) \asymp \theta^{-1},
\eal
as $\theta \to \infty$. For any $a > 1$, set $\theta_n = \lfloor a^n \rfloor$ so that $\theta_n \to \infty$ as $n \to \infty$. Then by the Borel-Cantelli lemma, $\frac{\theta_n^{j-1}}{(1 - \alpha)_{(j-1)}} H_j(\theta_n) \to 1$ almost surely as $n \to \infty$. 

For general $\theta$, we can find $n$ such that $\theta_n \leq \theta \leq \theta_{n+1}$, and the almost sure convergence of $H_j(\alpha, \theta)$ follows by the squeeze theorem.  

Finally the truncated version follows from the fact that $\frac{\theta^{j-1}}{(1 - \alpha)_{(j-1)}} \sum_{k= \lfloor \theta^r \rfloor + 1}^\infty V_k^j \to 0$ almost surely as $\theta \to \infty$. 
\end{proof}


\section{CLT for the homozygosity of the HPYP with $L$ groups} \label{Sec: CLTLGroups}

Recall that the homozygosity of the HPYP with $L$ groups is defined as
\bal
H_{m,L}(\alpha, \theta_0, \beta, \theta) &= \sum_{i=1}^\infty \sum_{\bm{m} \in M_{m,L}} \frac{1}{L^m} \prod_{\ell=1}^L Z_{\ell i}^{m_\ell},
\eal
where $\bm{Z}_\ell = (Z_{\ell 1}, Z_{\ell 2}, \ldots)$ are the HPYP weights of the $\ell$th group. In this section we prove Theorem \ref{HPYPCLTLGroups}, the CLT for $H_{m,L}(\alpha, \theta_0, \beta, \theta)$. 

\subsection{Subordinator representation, law of large numbers, and decomposition for the HPYP with $L$ groups}

For all $1 \leq \ell \leq L$, the HPYP weights of the $\ell$th group has the subordinator representation
\bal
\bm{Z}_\ell = (Z_{\ell 1}, Z_{\ell 2}, \ldots) = \left( \frac{W_{\ell 1}(\alpha, \theta_0, \beta, \theta)}{\sigma_{\beta, \theta, \ell}}, \frac{W_{\ell 2}(\alpha, \theta_0, \beta, \theta)}{\sigma_{\beta, \theta, \ell}},\ldots \right),
\eal
where the increments are defined by
\bal
W_{\ell k}(\alpha, \theta_0, \beta, \theta) = \sigma_{\beta, \theta, \ell}\left( \sum_{i=1}^k V_i \right) - \sigma_{\beta, \theta, \ell}\left( \sum_{i=1}^{k-1} V_i \right) \stackrel{D}{=} \sigma_{\beta, \theta, \ell}(V_k), 
\eal
with $\sigma_{\beta, \theta, \ell}(t) = \sigma_\ell\left( \frac{\gamma_{1/\beta}t}{C\Gamma(1 - \beta)} \right)$ and $\bm{V} = (V_1, V_2, \ldots)$ the level 1 masses. 
We start by computing the expected value of $H_{m,L}(\alpha, \theta_0, \beta, \theta)$. 

\begin{lemma} \label{homozygosityLgroupsmean}
For all $m \geq 2$, 
\bal
\E[H_{m,L}(\alpha, \theta_0, \beta, \theta)] = \frac{1}{L^m} \sum_{\bm{m} \in M_{m,L}} \frac{1}{ \prod_{\ell=1}^L (\theta)_{(m_\ell)} }  \sum_{j=1}^m \left( \sum_{\bm{j} \in M_{j,L}} \prod_{\ell=1}^L \Cs(m_\ell,j_\ell,\beta) \right) \frac{\prod_{s=0}^{j-1} (\theta + s\beta) (1 - \alpha)_{(j-1)} }{\beta^j (\theta_0 + 1)_{(j-1)}}.
\eal
\end{lemma}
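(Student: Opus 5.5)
We compute $\E[H_{m,L}]$ from the subordinator representation, exploiting the independence of $\gamma_{1/\beta}$ (equivalently $\sigma_{\beta,\theta,\ell}$) from the normalized masses. For each group $\ell$ the normalizing total mass $\sigma_{\beta,\theta,\ell}$ is $\Gam(\theta,1)$. Conditionally on $\bm{V}$, it is independent of the normalized vector $\bm{Z}_\ell$; this is the Pitman–Yor analogue of Lukacs' independence, stated for the subordinator representation in Section \ref{Sec:Subordinatorrep}. We will use it in the form: given $\bm V$, the total mass $\sigma_{\beta,\theta,\ell}(1)$ is independent of $(W_{\ell k}/\sigma_{\beta,\theta,\ell})_k$. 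This conditional independence is the step that needs the most care, since the time change at level two is $\sum_i V_i=1$. Granting it, we get
\begin{align*}
\E\Big[\prod_{\ell=1}^L W_{\ell i}^{m_\ell}\,\Big|\,\bm V\Big]=\E\Big[\prod_{\ell}\sigma_{\beta,\theta,\ell}^{m_\ell}\Big]\,\E\Big[\prod_\ell Z_{\ell i}^{m_\ell}\,\Big|\,\bm V\Big],
\end{align*}
where $\E[\sigma_{\beta,\theta,\ell}^{m_\ell}]=(\theta)_{(m_\ell)}$. This explains the factor $1/\prod_\ell(\theta)_{(m_\ell)}$, and reduces the problem to computing $\E[\prod_\ell W_{\ell i}^{m_\ell}]$.

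The $L$ groups are built from independent subordinators $\sigma_\ell$, but they share the gamma variable $\gamma_{1/\beta}$. This sharing is what couples the groups and produces the convolution over $\bm j\in M_{j,L}$. (If one instead takes independent gammas per group, the product over $\ell$ would give $\prod_\ell\E[\gamma^{j_\ell}]$, which does not match the stated formula. The stated formula therefore indicates a common $\gamma_{1/\beta}$, and we follow that convention.) Conditionally on $\bm V$ and $\gamma_{1/\beta}$, the increments $W_{\ell i}$, $\ell=1,\dots,L$, are independent. The first identity of Lemma \ref{incrementsconditionalmean} then gives
\begin{align*}
\E\Big[\prod_\ell W_{\ell i}^{m_\ell}\,\Big|\,\bm V,\gamma_{1/\beta}\Big]=\prod_\ell\sum_{j_\ell}\Cs(m_\ell,j_\ell,\beta)\gamma_{1/\beta}^{j_\ell}V_i^{j_\ell}=\sum_{j}\Big(\sum_{\bm j\in M_{j,L}}\prod_\ell\Cs(m_\ell,j_\ell,\beta)\Big)\gamma_{1/\beta}^jV_i^j .
\end{align*}
Terms with $j_\ell=0$ vanish when $m_\ell\ge1$, and have $\Cs(0,0,\beta)=1$ when $m_\ell=0$, so $j$ effectively runs from $1$ to $m$. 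Next we average over $\gamma_{1/\beta}\sim\Gam(\theta/\beta,1)$, using $\E[\gamma_{1/\beta}^j]=\prod_{s=0}^{j-1}(\theta+s\beta)/\beta^j$ as in Lemma \ref{incrementsconditionalmean}.

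It remains to sum over $i$ and take the expectation in $\bm V$. Using $\E[\sum_i V_i^j]=\E[H_j(\alpha,\theta_0)]=(1-\alpha)_{(j-1)}/(1+\theta_0)_{(j-1)}$, which is the $N\to\infty$ limit of Lemma \ref{level1weightspartialsum} (the product term tends to $0$), we obtain the factor $(1-\alpha)_{(j-1)}/(\theta_0+1)_{(j-1)}$. Tonelli justifies interchanging the sum and expectation since all terms are nonnegative. Multiplying by $1/L^m$ and summing over $\bm m\in M_{m,L}$ yields the claimed formula.

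The main obstacle is justifying the factorization in the first paragraph, i.e.\ the independence of the $\Gam(\theta,1)$ totals from the normalized masses jointly across groups. If a direct argument is awkward, an alternative is to avoid independence altogether. Write $Z_{\ell i}=W_{\ell i}/\sigma_{\beta,\theta,\ell}$ and use the identity $\sigma^{-n}=\Gamma(n)^{-1}\int_0^\infty t^{n-1}e^{-t\sigma}\,dt$, differentiating the conditional Laplace transform of Lemma \ref{incrementsconditionalmean}. This route is heavier but purely computational.
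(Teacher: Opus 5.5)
\textbf{The gap.} The step you flagged as the main obstacle cannot be granted, and no choice of model makes your chain of equalities hold. You need two things at once. First, (i) the factorization $\E[\prod_\ell W_{\ell i}^{m_\ell}\mid\bm V]=\E[\prod_\ell\sigma_{\beta,\theta,\ell}^{m_\ell}]\,\E[\prod_\ell Z_{\ell i}^{m_\ell}\mid\bm V]$ together with $\E[\prod_\ell\sigma_{\beta,\theta,\ell}^{m_\ell}]=\prod_\ell(\theta)_{(m_\ell)}$. Second, (ii) a single $\gamma_{1/\beta}$ shared by all groups, to produce $\E[\gamma_{1/\beta}^{j}]$ with $j=j_1+\dots+j_L$.

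These are incompatible. With a shared $\gamma_{1/\beta}$ the totals are dependent across groups: for example $\E[\sigma_{\beta,\theta,1}\sigma_{\beta,\theta,2}]=\beta^2\E[\gamma_{1/\beta}^2]=\theta(\theta+\beta)\neq\theta^2$. With independent gammas, (i) holds but yields $\prod_\ell\E[\gamma_\ell^{j_\ell}]$, as you observed yourself.

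The right conclusion from your observation is that the displayed identity is false for $L\ge 2$ and $\beta>0$. Take $L=m=2$ and $\bm m=(1,1)$. Since $\Cs(1,1,\beta)=\beta$ and $\Cs(1,0,\beta)=\Cs(1,2,\beta)=0$, only $j=2$, $\bm j=(1,1)$ contributes, and the summand equals $\frac{(\theta+\beta)(1-\alpha)}{\theta(\theta_0+1)}$. In either model, however, $Z_{\ell i}$ has conditional mean $V_i$ given $\bm V$ and the gamma variable(s), because the jump times of the level two subordinator are iid uniform and independent of the jump sizes. The groups are also conditionally independent given these variables. Hence $\E[\sum_i Z_{1i}Z_{2i}]=\E[\sum_i V_i^2]=\frac{1-\alpha}{\theta_0+1}$. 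The $(2,0)$ and $(0,2)$ summands are single-group terms, which the formula gets right, so the two sides of the lemma differ by $\frac{\beta(1-\alpha)}{4\theta(\theta_0+1)}$.

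\textbf{Comparison with the paper.} The paper's one-line proof has the same defect in a different place. It replaces $\E[\prod_\ell Z_{\ell i}^{m_\ell}\mid\bm V,\gamma_{1/\beta}]$ by $\prod_\ell \E[W_{\ell i}^{m_\ell}\mid\bm V,\gamma_{1/\beta}]/(\theta)_{(m_\ell)}$. But given $\gamma_{1/\beta}$, the total $\sigma_{\beta,\theta,\ell}$ is neither $\Gam(\theta,1)$ nor independent of $\bm Z_\ell$. For $m_\ell=1$ this step would assert $\E[Z_{\ell i}\mid\bm V,\gamma_{1/\beta}]=\beta\gamma_{1/\beta}V_i/\theta$ instead of $V_i$. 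The error averages out for $L=1$, but not in products over groups sharing $\gamma_{1/\beta}$.

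\textbf{What your argument does prove.} Your unconditional factorization is the sound version of this idea.
\begin{enumerate}
\item For $L=1$ it proves the lemma. Given $\bm V$, $\sigma_{\beta,\theta}(1)$ is independent of the ranked normalized jumps, and the jump times are iid uniforms independent of $\gamma_{1/\beta}$ and of the sizes.
\item For general $L$ with an independent gamma for each group, which is what conditional independence of the groups given the level one process requires, it proves a corrected identity. In it, $\prod_{s=0}^{j-1}(\theta+s\beta)/\beta^j$ is replaced by $\prod_{\ell=1}^L\prod_{s=0}^{j_\ell-1}(\theta+s\beta)/\beta^{j_\ell}$.
\end{enumerate}
The two expressions agree up to a factor $1+O(1/\theta)$, so the leading-order asymptotics of the mean are unaffected. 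Your Laplace-transform fallback would also arrive at the corrected value, not the stated one.
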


\begin{proof}
Conditioning on $\bm{V}$ and $\gamma_{1/\beta}$ and using Lemma \ref{incrementsconditionalmean} yields the conditional expectation
\bal
&\E[H_{m,L}(\alpha, \theta_0, \beta, \theta) \mid \bm{V}, \gamma_{1/\beta}] = \frac{1}{L^m} \sum_{i=1}^\infty \sum_{\bm{m} \in M_{m,L}} \prod_{\ell=1}^L \left( \frac{1}{(\theta)_{(m_\ell)}} \sum_{j=1 \wedge m_\ell}^{m_\ell} \Cs(m_\ell,j,\beta) \gamma_{1/\beta}^j V_i^j  \right) \\
&= \frac{1}{L^m} \sum_{\bm{m} \in M_{m,L}} \frac{1}{ \prod_{\ell=1}^L (\theta)_{(m_\ell)} }  \sum_{j=1}^m \left( \sum_{\bm{j} \in M_{j,L}} \prod_{\ell=1}^L \Cs(m_\ell,j_\ell,\beta) \right) \gamma_{1/\beta}^j \sum_{i=1}^\infty V_i^j.
\eal
Applying the law of total expectation finishes the proof. 
\end{proof}

Define the coefficients $\{A_j(\beta, m, L)\}_{1 \leq j \leq m}$ as
\bal
A_j(\beta, m, L) = \sum_{\bm{m} \in M_{m,L}} \sum_{\bm{j} \in M_{j,L}} \prod_{\ell=1}^L \Cs(m_\ell,j_\ell,\beta). 
\eal
By Lemma \ref{homozygosityLgroupsmean}, observe that the mean of the homozygosity is asymptotically
\bal
\E[H_{m,L}(\alpha, \theta_0, \beta, \theta)] &\sim \frac{1}{L^m \theta^{m-1}} \sum_{j=1}^m A_j(\beta, m, L) \frac{(1 - \alpha)_{(j-1)}}{\beta^j c^{j-1}} =: f(\alpha, \beta, \theta; m, L, c)
\eal
as $\theta_0, \theta \to \infty$ such that $\frac{\theta_0}{\theta} \to c$. Define
\bal
\tilde{f}(\alpha, \beta; m, L, c) = \sum_{j=1}^m A_j(\beta, m, L) \frac{(1 - \alpha)_{(j-1)}}{\beta^j c^{j-1}},
\eal
so that $L^m \theta^{m-1} f(\alpha, \beta, \theta; m, L, c) = \tilde{f}(\alpha, \beta; m, L, c)$. 

The next result is the law of large numbers for $H_{m,L}(\alpha, \theta_0, \beta, \theta)$. The proof follows by Chebyshev's inequality.

\begin{proposition} \label{homozygosityLgroupsLLN}
The homozygosity satisfies
\bal
\frac{H_{m,L}(\alpha, \theta_0, \beta, \theta)}{f(\alpha, \beta, \theta; m, L, c)} \xrightarrow{P} 1
\eal
as $\theta_0, \theta \to \infty$ such that $\frac{\theta_0}{\theta} \to c$.
\end{proposition}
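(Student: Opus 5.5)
Since the statement is a weak law of large numbers, the plan is to use Chebyshev's inequality. Lemma \ref{homozygosityLgroupsmean} already gives $\E[H_{m,L}]\sim f(\alpha,\beta,\theta;m,L,c)$. It therefore suffices to show
\[
\E[H_{m,L}^2]=f(\alpha,\beta,\theta;m,L,c)^2\,(1+o(1)),
\]
equivalently $\Var(H_{m,L})=o(\theta^{-2(m-1)})$. Then Chebyshev gives $P(|H_{m,L}/f-1|>\epsilon)\to 0$. The variance should in fact be $O(\theta^{-2m+1})$, matching the $\sqrt{\theta}$ scaling in Theorem \ref{HPYPCLTLGroups}.

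The natural route is to decouple the denominators from the subordinator increments rather than compute $\E[H_{m,L}^2]$ directly. Write
\[
H_{m,L}=\frac{1}{L^m}\sum_{\bm{m}\in M_{m,L}}\prod_{\ell}\sigma_{\beta,\theta,\ell}^{-m_\ell}\;\sum_i\prod_\ell W_{\ell i}^{m_\ell}.
\]
Each total mass $\sigma_{\beta,\theta,\ell}$ is $\Gam(\theta,1)$. Hence $\sigma_{\beta,\theta,\ell}/\theta\to 1$ in probability, with fluctuations of order $\theta^{-1/2}$; note that all groups share $\gamma_{1/\beta}$. It then suffices to prove that
\[
Y_{\bm m}:=\theta^{-m}\sum_i\prod_\ell W_{\ell i}^{m_\ell}
\]
satisfies
\[
\theta^{m-1}Y_{\bm m}\xrightarrow{P}\sum_{j}\sum_{\bm j\in M_{j,L}}\prod_\ell\Cs(m_\ell,j_\ell,\beta)\frac{(1-\alpha)_{(j-1)}}{\beta^jc^{j-1}}.
\]
Summing over $\bm m$ and using Slutsky then gives the result.

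For $Y_{\bm m}$ I would proceed in three conditioning steps. First, condition on $(\bm V,\gamma_{1/\beta})$. Given these, the $W_{\ell i}$ are independent across $i$ and $\ell$, and Lemma \ref{incrementsconditionalmean} gives
\[
\E\Big[\sum_i\prod_\ell W_{\ell i}^{m_\ell}\,\Big|\,\bm V,\gamma_{1/\beta}\Big]=\sum_j\sum_{\bm j\in M_{j,L}}\prod_\ell\Cs(m_\ell,j_\ell,\beta)\,\gamma_{1/\beta}^j H_j(\alpha,\theta_0).
\]
The conditional variance is bounded by $\sum_i\prod_\ell\E[W_{\ell i}^{2m_\ell}\mid\cdot]$, which by the same lemma is a polynomial in $\gamma_{1/\beta}$ times $H_{j'}(\alpha,\theta_0)$ for $j'\le 2m$. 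Second, use $\gamma_{1/\beta}\sim\Gam(\theta/\beta,1)$, so that $\beta\gamma_{1/\beta}/\theta\to 1$ in probability. Third, use Lemma \ref{asconvHomozyg}, which gives $\theta_0^{j-1}H_j(\alpha,\theta_0)\to(1-\alpha)_{(j-1)}$ almost surely, with $\theta_0\sim c\theta$.

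Combining these, the conditional mean times $\theta^{m-1-m}$ converges to the stated constant. The $j$-th term scales as $\theta^{j}\cdot\theta^{-(j-1)}\cdot\theta^{-m}\cdot\theta^{m-1}=O(1)$, which is consistent. For the conditional variance scaled by $\theta^{2(m-1)-2m}=\theta^{-2}$, the term with index $j'$ is of order $\theta^{j'}\theta^{-(j'-1)}\theta^{-2}=\theta^{-1}\to0$. Conditional Chebyshev then yields convergence in probability of $\theta^{m-1}Y_{\bm m}$.

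The main obstacle is the bookkeeping of orders. One must confirm that every $j$ between $1$ and $m$ contributes at the same order, which is where $\theta_0/\theta\to c$ enters. One must also make sure that using conditional variance bounds, rather than exact moments, does not lose a power of $\theta$. I expect this to work because each diagonal term in the conditional variance gains exactly one factor $\theta^{-1}$.
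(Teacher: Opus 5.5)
Your argument is correct, but it is organized differently from what the paper indicates. The paper's proof is only the remark that the result follows from Chebyshev's inequality. That most naturally means an unconditional second-moment bound for $H_{m,L}$ itself around the mean of Lemma~\ref{homozygosityLgroupsmean}, which is your opening plan.

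You instead apply Chebyshev conditionally on $(\bm V,\gamma_{1/\beta})$ to the unnormalized sums $\sum_i\prod_\ell W_{\ell i}^{m_\ell}$. You then use laws of large numbers for $\gamma_{1/\beta}$, for the level-one homozygosities, and for the total masses $\sigma_{\beta,\theta,\ell}\sim\Gam(\theta,1)$, and conclude by Slutsky. This mirrors the decomposition the paper later uses for the CLT. Your intermediate statement $\sum_i\prod_\ell W_{\ell i}^{m_\ell}/(\theta\tilde f)\xrightarrow{P}C_{\bm m}$ is exactly the ``law of large numbers'' the paper invokes without proof in the proof of Theorem~\ref{HPYPCLTLGroups}.

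What your route buys is that it never needs moments of the normalized weights $Z_{\ell i}=W_{\ell i}/\sigma_{\beta,\theta,\ell}$. A direct computation of $\E[H_{m,L}^2]$ would need the total masses to be independent of the normalized weights jointly across groups. That is delicate in this representation: all groups share $\gamma_{1/\beta}$, so the $\sigma_{\beta,\theta,\ell}$ are correlated across groups and are not jointly independent of $(\bm Z_1,\ldots,\bm Z_L)$. You only use Lemma~\ref{incrementsconditionalmean}, conditional independence given $(\bm V,\gamma_{1/\beta})$, and marginal laws. The direct route, when exact moment formulas are available, is a single computation and also gives the rate $\Var(H_{m,L})=O(\theta^{-2m+1})$.

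Two small simplifications are available. First, you only need convergence in probability of $\theta_0^{j-1}H_j(\alpha,\theta_0)$, which follows from Lemma~\ref{PYPCLT} or from the $L^2$ estimate in the proof of Lemma~\ref{asconvHomozyg}; the almost sure statement is not needed. Second, since $\bm V$ and $\gamma_{1/\beta}$ are independent, you can take expectations in your conditional variance bound, $\theta^{-2}\E[\gamma_{1/\beta}^{j'}]\,\E[H_{j'}(\alpha,\theta_0)]=O(\theta^{-1})$, and apply unconditional Chebyshev directly.
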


The law of large numbers suggest a scaling of the form
\bal
\tilde{H}_{m,L}(\alpha, \theta_0, \beta, \theta) = \frac{\sqrt{\theta}\left( H_{m,L}(\alpha, \theta_0, \beta, \theta) - \frac{1}{L^m \theta^{m-1}} \sum_{j=1}^m A_j(\beta, m, L) \frac{ \prod_{s=1}^{j-1} (\theta + s\beta) (1 - \alpha)_{(j-1)}}{\beta^j \theta_0^{j-1}} \right)}{f(\alpha, \beta, \theta; m, L, c)},
\eal
which we refer to as the {\em scaled $m$th order homozygosity}. 

For notational simplicity, in what follows we suppress notation and write 
\bal
\tilde{f} = \tilde{f}(\alpha, \beta; m, L, c) \quad \text{and} \quad W_{\ell i} = W_{\ell i}(\alpha, \theta_0, \beta, \theta).
\eal
The scaled $m$th order homozygosity can be decomposed as
\bal
\tilde{H}_{m,L}(\alpha, \theta_0, \beta, \theta) &= X_{m,L}(\alpha, \theta_0, \beta, \theta) + T_{m,L}(\alpha, \theta_0, \beta, \theta) + Y_{m,L}(\alpha, \theta_0, \beta, \theta) \\
&\qquad + \sum_{\bm{m} \in M_{m,L}} \sqrt{\theta} \left( \prod_{\ell=1}^L \left( \frac{\theta}{\sigma_{\beta, \theta, \ell}} \right)^{m_\ell} - 1 \right) \frac{\sum_{i=1}^\infty \prod_{\ell=1}^L W_{\ell i}^{m_\ell}}{\theta \tilde{f}},
\eal
where the {\em level 1}, {\em gamma}, and {\em stable contributions} are defined, respectively, as
\bal
X_{m,L}(\alpha, \theta_0, \beta, \theta) &= \frac{1}{\sqrt{\theta} \tilde{f}} \sum_{i=1}^\infty \sum_{\bm{m} \in M_{m,L}} \left( \prod_{\ell=1}^L W_{\ell i}^{m_\ell} - \E\left[ \prod_{\ell=1}^L W_{\ell i}^{m_\ell} \middle\vert \bm{V}, \gamma_{1/\beta}\right] \right)  \\
T_{m,L}(\alpha, \theta_0, \beta, \theta) &= \frac{1}{\sqrt{\theta} \tilde{f}} \sum_{i=1}^\infty \sum_{\bm{m} \in M_{m,L}} \left( \E\left[ \prod_{\ell=1}^L W_{\ell i}^{m_\ell} \middle\vert \bm{V}, \gamma_{1/\beta}\right] - \E\left[ \prod_{\ell=1}^L W_{\ell i}^{m_\ell} \middle\vert \bm{V} \right] \right) \\
Y_{m,L}(\alpha, \theta_0, \beta, \theta) &= \frac{1}{\sqrt{\theta} \tilde{f}} \left( \sum_{i=1}^\infty \sum_{\bm{m} \in M_{m,L}} \E\left[ \prod_{\ell=1}^L W_{\ell i}^{m_\ell} \middle\vert \bm{V} \right] \right. \\
&\qquad \qquad \qquad \qquad \left. - \sum_{j=1}^m A_j(\beta, m, L) \frac{\prod_{s=1}^{j-1} (\theta + s\beta)(1 - \alpha)_{(j-1)}}{\beta^j \theta_0^{j-1}} \right),
\eal

We record the following useful lemma, which follows by Lemma \ref{incrementsconditionalmean} and direct calculation. 
\begin{lemma} \label{extendedincrementsmean}
For all $m \geq 1$ and $L \geq 1$, 
\bal
\sum_{i=1}^\infty \sum_{\bm{m} \in M_{m,L}} \E\left[ \prod_{\ell=1}^L W_{\ell i}^{m_\ell} \middle\vert \bm{V}, \gamma_{1/\beta} \right] &= \sum_{j=1}^m A_j(\beta, m, L) \gamma_{1/\beta}^j \sum_{i=1}^\infty V_i^j, \\
\sum_{i=1}^\infty \sum_{\bm{m} \in M_{m,L}} \E\left[ \prod_{\ell=1}^L W_{\ell i}^{m_\ell} \middle\vert \bm{V} \right] &= \sum_{j=1}^m A_j(\beta, m, L) \frac{\prod_{s=0}^{j-1} (\theta + s\beta)}{\beta^j} \sum_{i=1}^\infty V_i^j. 
\eal
\end{lemma}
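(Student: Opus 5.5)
The plan is to prove both identities by conditioning on $\bm{V}$ and $\gamma_{1/\beta}$ and using the conditional independence of the $L$ groups; after that only algebraic reorganization and one integration over the gamma variable remain.

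\textbf{First identity.} Given $\bm{V}$ and $\gamma_{1/\beta}$, the subordinators $\sigma_1,\ldots,\sigma_L$ are independent. Hence, for fixed $i$, the increments $W_{1i},\ldots,W_{Li}$ are conditionally independent, each distributed as $\sigma_{\beta,\theta,\ell}(V_i)$. This gives
\bal
\E\left[\prod_{\ell=1}^L W_{\ell i}^{m_\ell}\middle\vert \bm{V},\gamma_{1/\beta}\right]=\prod_{\ell=1}^L \E\left[W_{\ell i}^{m_\ell}\middle\vert \bm{V},\gamma_{1/\beta}\right].
\eal
Apply the first formula of Lemma \ref{incrementsconditionalmean} to each factor with $m_\ell\ge 1$, namely $\sum_{j_\ell=1}^{m_\ell}\Cs(m_\ell,j_\ell,\beta)\gamma_{1/\beta}^{j_\ell}V_i^{j_\ell}$. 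For $m_\ell=0$ the factor equals $1$, which matches the convention $\Cs(0,0,\beta)=1$ and $\Cs(m,0,\beta)=0$ for $m\ge1$; the latter follows from the defining alternating sum. With this convention each factor can be written as $\sum_{j_\ell=0}^{m_\ell}$. Expand the product and group terms by $j=j_1+\dots+j_L$. Since $\Cs(m_\ell,j_\ell,\beta)=0$ whenever $j_\ell>m_\ell$, the sum over $\bm{j}$ can be taken over all of $M_{j,L}$, with $j$ running from $1$ to $m$. The case $j=0$ drops out because $\sum_\ell m_\ell=m\ge1$ forces some $m_\ell\ge1$, and then $j_\ell=0$ gives a zero coefficient. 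Summing over $\bm{m}\in M_{m,L}$ produces $A_j(\beta,m,L)\gamma_{1/\beta}^jV_i^j$. Finally, interchange the sum over $i$ with the finite sums, which is justified because all terms are nonnegative.

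\textbf{Second identity.} Take a further conditional expectation given $\bm{V}$. The variable $\gamma_{1/\beta}$, the gamma subordinator at time $1/\beta$ with Lévy density $\theta x^{-1}e^{-x}$, is $\Gam(\theta/\beta,1)$ and independent of $\bm{V}$. Its moments are $\E[\gamma_{1/\beta}^j]=(\theta/\beta)_{(j)}=\prod_{s=0}^{j-1}(\theta+s\beta)/\beta^j$. Monotone convergence lets us pass the expectation through the infinite sum over $i$, which gives the stated formula.

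\textbf{Main obstacle.} The only delicate point is the bookkeeping for the $m_\ell=0$ factors and the extension of the index sets to all of $M_{j,L}$. This depends on the convention that $\Cs(m,j,\beta)$ vanishes for $j>m$ and for $j=0<m$, both of which follow from the defining alternating sum. The same convention is already implicit in the paper's definition of $A_j(\beta,m,L)$.
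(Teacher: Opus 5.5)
Your proof is correct and follows the route the paper intends. You condition on $\bm{V}$ and $\gamma_{1/\beta}$, factor across the $L$ conditionally independent groups, apply Lemma \ref{incrementsconditionalmean} to each factor, regroup by total degree $j=j_1+\dots+j_L$ to obtain $A_j(\beta,m,L)$, and then integrate out $\gamma_{1/\beta}\sim\Gam(\theta/\beta,1)$; this is the computation the paper carries out in the proof of Lemma \ref{homozygosityLgroupsmean}, and your explicit conventions $\Cs(0,0,\beta)=1$, $\Cs(m,0,\beta)=0$ for $m\geq 1$, and $\Cs(m,j,\beta)=0$ for $j>m$ make precise what the paper encodes with the lower summation limit $1\wedge m_\ell$.
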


\subsection{Level 1 contribution for the HPYP with $L$ groups}

\begin{lemma} \label{homozygosityLgroupsLevel1CLT}
The level 1 contribution satisfies
\bal
Y_{m,L}(\alpha, \theta_0, \beta, \theta) \xrightarrow{D} N(0, \sigma_{1,m,L}^2),
\eal
as $\theta_0, \theta \to \infty$ such that $\frac{\theta_0}{\theta} \to c$, where
\bal
\sigma_{1,m,L}^2 = \frac{\sum_{1 \leq i,j \leq m} \frac{A_i(\beta, m, L)A_j(\beta, m, L)\left[ (1 - \alpha)_{(i+j-1)} + (\alpha - ij)(1 - \alpha)_{(i-1)}(1 - \alpha)_{(j-1)} \right]}{\beta^{i+j} c^{i+j-1}}}{\left( \sum_{j=1}^m A_j(\beta, m, L) \frac{(1 - \alpha)_{(j-1)}}{\beta^j c^{j-1}} \right)^2}.
\eal
\end{lemma}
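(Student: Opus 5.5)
By Lemma \ref{extendedincrementsmean}, $Y_{m,L}$ depends only on the level 1 masses $\bm{V}$. It is an explicit linear combination of the level 1 homozygosities $H_j(\alpha,\theta_0)=\sum_i V_i^j$, $1\le j\le m$, with $H_1=1$. The plan is therefore to reduce everything to Lemma \ref{PYPCLT}, applied with concentration parameter $\theta_0$. Write
\[
Y_{m,L} = \frac{1}{\sqrt{\theta}\,\tilde f}\sum_{j=1}^m A_j(\beta,m,L)\,\frac{\theta\prod_{s=1}^{j-1}(\theta+s\beta)}{\beta^j}\left( H_j(\alpha,\theta_0) - \frac{(1-\alpha)_{(j-1)}}{\theta_0^{j-1}}\right),
\]
using $\prod_{s=0}^{j-1}(\theta+s\beta)=\theta\prod_{s=1}^{j-1}(\theta+s\beta)$. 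The $j=1$ term vanishes identically, since $H_1=1=(1-\alpha)_{(0)}/\theta_0^0$.

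For $j\ge 2$ we rewrite each bracket as
\[
\frac{(1-\alpha)_{(j-1)}}{\theta_0^{j-1}}\cdot\frac{\tilde H_j(\alpha,\theta_0)}{\sqrt{\theta_0}},
\]
where $\tilde H_j(\alpha,\theta_0)$ is the scaled homozygosity of Lemma \ref{PYPCLT}. This expresses $Y_{m,L}$ as $\sum_{j=2}^m c_j(\theta,\theta_0)\,\tilde H_j(\alpha,\theta_0)$ with deterministic coefficients
\[
c_j(\theta,\theta_0) = \frac{A_j(\beta,m,L)}{\tilde f}\cdot\frac{\sqrt{\theta}\prod_{s=1}^{j-1}(\theta+s\beta)\,(1-\alpha)_{(j-1)}}{\beta^j\,\theta_0^{j-1}\sqrt{\theta_0}}.
\]
As $\theta_0/\theta\to c$, we have $\prod_{s=1}^{j-1}(\theta+s\beta)/\theta_0^{j-1}\to c^{-(j-1)}$ and $\sqrt{\theta/\theta_0}\to c^{-1/2}$. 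Hence
\[
c_j \to \frac{A_j(\beta,m,L)(1-\alpha)_{(j-1)}}{\tilde f\,\beta^j c^{j-1/2}}.
\]

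Lemma \ref{PYPCLT} gives the joint convergence $(\tilde H_2,\ldots,\tilde H_m)\xrightarrow{D}(H_2(\alpha),\ldots,H_m(\alpha))$. Slutsky's theorem, together with the continuous mapping theorem for the linear functional with converging coefficients, then gives $Y_{m,L}\xrightarrow{D}N(0,\sigma^2)$ with
\[
\sigma^2 = \sum_{2\le i,j\le m}\lim c_i\lim c_j\,\cov(H_i(\alpha),H_j(\alpha)).
\]
Substituting the covariance $\frac{(1-\alpha)_{(i+j-1)}}{(1-\alpha)_{(i-1)}(1-\alpha)_{(j-1)}}+\alpha-ij$ and multiplying through gives the numerator terms
\[
\frac{A_iA_j\left[(1-\alpha)_{(i+j-1)}+(\alpha-ij)(1-\alpha)_{(i-1)}(1-\alpha)_{(j-1)}\right]}{\beta^{i+j}c^{i+j-1}},
\]
divided by $\tilde f^2$. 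This matches $\sigma_{1,m,L}^2$, except that the stated sum runs over $1\le i,j\le m$.

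The remaining check, and the only real subtlety, is that the terms with $i=1$ or $j=1$ contribute zero. For $i=1$ the bracket is $(1-\alpha)_{(j)}+(\alpha-j)(1-\alpha)_{(j-1)}=(1-\alpha)_{(j-1)}\bigl[(1-\alpha+j-1)+\alpha-j\bigr]=0$, and symmetrically for $j=1$. So the formula over $1\le i,j\le m$ agrees with the sum over $i,j\ge2$.

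A minor point is that Lemma \ref{PYPCLT} is stated as $\theta\to\infty$ for a single parameter. Here $\theta_0\to\infty$ while the coefficients depend on $\theta$, which only enters deterministically, so no joint-limit issue arises. I expect no serious obstacle; the main care is the bookkeeping of the $\theta$ versus $\theta_0$ powers in $c_j$.
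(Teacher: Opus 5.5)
Your proposal is correct and is essentially the paper's proof. Both arguments use Lemma \ref{extendedincrementsmean} to write $Y_{m,L}$ as a deterministic linear combination of the scaled level 1 homozygosities $\tilde H_j(\alpha,\theta_0)$, $2\le j\le m$, with coefficients converging to $A_j(\beta,m,L)(1-\alpha)_{(j-1)}/(\tilde f\beta^j c^{j-1/2})$, and then apply Lemma \ref{PYPCLT}. Two additions of yours are worth noting: your check that the $i=1$ and $j=1$ terms of $\sigma_{1,m,L}^2$ vanish fills in a step the paper leaves implicit; and your first display, like the paper's proof, uses the centering with $\prod_{s=0}^{j-1}(\theta+s\beta)$, which is what the decomposition of $\tilde H_{m,L}$ actually produces, whereas the printed definition of $Y_{m,L}$ has $\prod_{s=1}^{j-1}(\theta+s\beta)$ and is off by a factor $\theta$.
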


\begin{proof}
By Lemma \ref{extendedincrementsmean} and direct computation, we can rewrite the level 1 contribution as
\bal
Y_{m,L}(\alpha, \theta_0, \beta, \theta) = \frac{\sqrt{\theta/\theta_0}}{\tilde{f}} \sum_{j=2}^m A_j(\beta, m, L) \frac{\prod_{s=1}^{j-1} (\theta + s\beta)(1 - \alpha)_{(j-1)}}{\beta^j \theta_0^{j-1}} \tilde{H}_j(\alpha, \theta_0),
\eal
where $\tilde{H}_j(\alpha, \theta_0) = \sqrt{\theta_0}\left( \frac{\theta_0^{j-1}}{(1 - \alpha)_{(j-1)}}  \sum_{i = 1}^\infty X_i^j - 1 \right)$ is the scaled level 1 homozygosity. From this representation, it follows that $Y_{m,L}$ is asymptotically normal with mean $0$ and variance
\bal
\sigma_{1,m,L}^2 &= \frac{\sum_{1 \leq i,j \leq m} \frac{A_i(\beta, m, L)A_j(\beta, m, L)\left[ (1 - \alpha)_{(i+j-1)} + (\alpha - ij)(1 - \alpha)_{(i-1)}(1 - \alpha)_{(j-1)} \right]}{\beta^{i+j} c^{i+j-1}}}{\tilde{f}^2}. \qedhere
\eal
\end{proof}

\subsection{Gamma and stable contributions for the HPYP with $L$ groups}

\begin{lemma} \label{homozygosityLgroupsLevel2.1CLT}
The gamma contribution satisfies
\bal
T_{m,L}(\alpha, \theta_0, \beta, \theta) \xrightarrow{D} N(0, \sigma_{T,m,L}^2),
\eal
as $\theta_0, \theta \to \infty$ such that $\frac{\theta_0}{\theta} \to c$, where 
\bal
\sigma_{T,m,L}^2 &= \beta \frac{ \sum_{1 \leq i,j \leq m} \frac{ A_i(\beta, m, L)A_j(\beta, m, L) (1 - \alpha)_{(i-1)}(1 - \alpha)_{(j-1)} }{\beta^{i+j} c^{i+j-2}} \cdot ij}{\left( \sum_{j=1}^m A_j(\beta, m, L) \frac{(1 - \alpha)_{(j-1)}}{\beta^j c^{j-1}} \right)^2}.
\eal
\end{lemma}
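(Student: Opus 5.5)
By Lemma \ref{extendedincrementsmean}, the gamma contribution can be written explicitly as
\bal
T_{m,L}(\alpha, \theta_0, \beta, \theta) = \frac{1}{\sqrt{\theta}\tilde{f}} \sum_{j=1}^m A_j(\beta, m, L) \left( \gamma_{1/\beta}^j - \frac{\prod_{s=0}^{j-1}(\theta + s\beta)}{\beta^j} \right) \sum_{i=1}^\infty V_i^j .
\eal
Here $\gamma_{1/\beta} = \gamma(1/\beta)$ is a Gamma random variable with shape $\theta/\beta$ and scale $1$. Its $j$th moment is exactly $\prod_{s=0}^{j-1}(\theta+s\beta)/\beta^j$, and it is independent of $\bm{V}$. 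The plan is to write $\gamma_{1/\beta} = \frac{\theta}{\beta}\left(1 + \sqrt{\beta/\theta}\, G_\theta\right)$. By the classical CLT for Gamma variables with growing shape parameter, $G_\theta = \sqrt{\beta/\theta}\,(\gamma_{1/\beta} - \theta/\beta) \xrightarrow{D} N(0,1)$ as $\theta \to \infty$.

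A delta-method expansion then gives
\bal
\gamma_{1/\beta}^j = \frac{\theta^j}{\beta^j}\left( 1 + j\sqrt{\beta/\theta}\, G_\theta + O_P(\theta^{-1}) \right),
\eal
while the centering satisfies $\prod_{s=0}^{j-1}(\theta+s\beta)/\beta^j = \frac{\theta^j}{\beta^j}(1 + O(\theta^{-1}))$. Hence
\bal
\gamma_{1/\beta}^j - \frac{\prod_{s=0}^{j-1}(\theta+s\beta)}{\beta^j} = \frac{\theta^{j}}{\beta^j}\left( j\sqrt{\beta/\theta}\, G_\theta + O_P(\theta^{-1})\right).
\eal

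Next, Lemma \ref{asconvHomozyg} gives $\sum_i V_i^j = \frac{(1-\alpha)_{(j-1)}}{\theta_0^{j-1}}(1+o(1))$ almost surely. Since $\theta_0/\theta \to c$, this yields
\bal
\frac{\theta^j}{\sqrt{\theta}\,\beta^j}\sum_i V_i^j = \sqrt{\theta}\,\frac{(1-\alpha)_{(j-1)}}{\beta^j c^{j-1}}(1+o(1)).
\eal
Combining the two expansions,
\bal
T_{m,L} = \frac{\sqrt{\beta}\, G_\theta}{\tilde f}\sum_{j=1}^m A_j(\beta,m,L)\, j\, \frac{(1-\alpha)_{(j-1)}}{\beta^j c^{j-1}} + o_P(1).
\eal
The $O_P(\theta^{-1})$ error terms, multiplied by $\sqrt{\theta}$, vanish. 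Slutsky's theorem then gives a normal limit. Its variance is $\beta\left(\sum_j j A_j (1-\alpha)_{(j-1)}/(\beta^j c^{j-1})\right)^2/\tilde f^2$, which expands as the double sum over $i,j$ with factor $ij$.

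The main point to check is the power of $c$: the stated formula has $c^{i+j-2}$ where my computation naturally gives $c^{i+j-2}$ after squaring $c^{-(i-1)}c^{-(j-1)}$. So the two should agree, and the bookkeeping needs to be verified carefully.

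The only other step needing care is justifying the replacement of $\sum_i V_i^j$ by its asymptotic value inside the product with $G_\theta$. This is handled because the convergence is almost sure and $G_\theta$ is tight. Independence of $\gamma_{1/\beta}$ and $\bm{V}$ is not even needed for this lemma alone. It will matter later, for the joint convergence with $Y_{m,L}$ and $X_{m,L}$.
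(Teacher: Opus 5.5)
Your proposal is correct and follows essentially the paper's route. You rewrite $T_{m,L}$ via Lemma \ref{extendedincrementsmean} and linearize the powers of $\beta\gamma_{1/\beta}/\theta$ (the paper does this with the multivariate delta method for $g(x)=(x,\ldots,x^m)$), replace the level 1 homozygosities by their limits, and apply Slutsky; the only minor difference is that the paper disposes of the cross term using Handa's CLT (Lemma \ref{PYPCLT}) plus the gamma law of large numbers, whereas your use of Lemma \ref{asconvHomozyg} together with tightness of $G_\theta$ needs only the law of large numbers.
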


\begin{proof}
By Lemma \ref{extendedincrementsmean} and direct computation, we can rewrite the gamma contribution as
\bal
&T_{m,L}(\alpha, \theta_0, \beta, \theta) =  \frac{1}{\sqrt{\theta} \tilde{f}} \sum_{j=1}^m A_j(\beta, m, L) \sum_{i=1}^\infty V_i^j \left( \gamma_{1/\beta}^j - \frac{\prod_{s=0}^{j-1} (\theta + s\beta)}{\beta^j} \right) \\
&= \frac{1}{\tilde{f}} \sum_{j=1}^m \frac{A_j(\beta, m, L) (1 - \alpha)_{(j-1)}}{\beta^j (\theta_0/\theta)^{j-1}} \left( \frac{\theta_0^{j-1}}{(1 - \alpha)_{(j-1)}} \sum_{i=1}^\infty V_i^j \right) \sqrt{\theta} \left( \left(\frac{\beta \gamma_{1/\beta}}{\theta}\right)^j - \frac{\prod_{s=0}^{j-1} (\theta + s\beta)}{\theta^j} \right),
\eal
which can then be decomposed into a sum of two terms as
\begin{align} \label{Level2.1Decomp}
\begin{split}
T_{m,L}(\alpha, \theta_0, \beta, \theta) &= \frac{\sqrt{\theta/\theta_0}}{\tilde{f}} \sum_{j=1}^m \frac{A_j(\beta, m, L) (1 - \alpha)_{(j-1)}}{\beta^j (\theta_0/\theta)^{j-1}} \tilde{H}_j(\alpha, \theta_0) \left( \left(\frac{\beta \gamma_{1/\beta}}{\theta}\right)^j - \frac{\prod_{s=0}^{j-1} (\theta + s\beta)}{\theta^j} \right) \\
&\qquad + \frac{1}{\tilde{f}} \sum_{j=1}^m \frac{A_j(\beta, m, L) (1 - \alpha)_{(j-1)}}{\beta^j (\theta_0/\theta)^{j-1}} \sqrt{\theta} \left( \left(\frac{\beta \gamma_{1/\beta}}{\theta}\right)^j - \frac{\prod_{s=0}^{j-1} (\theta + s\beta)}{\theta^j} \right).
\end{split}
\end{align}

For any $j \geq 1$, the law of large numbers for iid gamma random variables implies that
\bal
\left( \left(\frac{\beta \gamma_{1/\beta}}{\theta}\right)^j - \frac{\prod_{s=0}^{j-1} (\theta + s\beta)}{\theta^j} \right) \xrightarrow{P} 0
\eal
as $\theta \to \infty$. Thus by Slutsky's theorem and Lemmas \ref{PYPCLT} and \ref{asconvHomozyg}, the first summand of (\ref{Level2.1Decomp}) converges to $0$ in probability as $\theta_0, \theta \to \infty$. 

Next, observe that we can write
\bal
\sqrt{\theta} \left( \left(\frac{\beta \gamma_{1/\beta}}{\theta}\right)^j - \frac{\prod_{s=0}^{j-1} (\theta + s\beta)}{\theta^j} \right) 
&= \sqrt{\theta} \left( \left(\frac{\beta \gamma_{1/\beta}}{\theta}\right)^j - 1\right) + \sqrt{\theta}\left(1 - \frac{\prod_{s=0}^{j-1} (\theta + s\beta)}{\theta^j} \right). 
\eal
The second summand above converges to $0$ as $\theta \to \infty$. On the other hand, the CLT for a sum of iid Gamma random variables gives
\bal
\sqrt{\theta}\left( \frac{\beta \gamma_{1/\beta}}{\theta} - 1 \right) \xrightarrow{D} N(0,\beta).
\eal
Define the function $g(x) = (x, x^2, \ldots, x^m)$ so that $\nabla g(x) = (1, 2x, \ldots, mx^{m-1})$. By the multivariate Delta method, 
\begin{align} \label{gammamultivariateCLT}
\begin{split}
\left(\sqrt{\theta} \left( \left(\frac{\beta \gamma_{1/\beta}}{\theta}\right) - 1\right), \sqrt{\theta} \left( \left(\frac{\beta \gamma_{1/\beta}}{\theta}\right)^2 - 1\right), \ldots, \sqrt{\theta} \left( \left(\frac{\beta \gamma_{1/\beta}}{\theta}\right)^m - 1\right) \right) &= \sqrt{\theta}\left( g\left(\frac{\beta \gamma_{1/\beta}}{\theta}\right) - g(1) \right) \\
&\xrightarrow{D} N(\bm{0}, \bm{\Sigma}),
\end{split}
\end{align}
where the covariance matrix is given by
\begin{align} \label{GammaCovMatrix}
\begin{split}
\bm{\Sigma} = \nabla g(1)^T \cdot \beta \cdot \nabla g(1) = \beta \begin{pmatrix}
1 & 2 & 3 & \dotsb & m \\ 
2 & 4 & 6 & \dotsb & 2m \\
3 & 6 & 9 & \dotsb & 3m \\
\vdots & \vdots & \vdots & \ddots & \vdots \\
m & 2m & 3m & \dotsb & m^2
\end{pmatrix}.
\end{split}
\end{align}
Combining the above, it follows that the second summand of (\ref{Level2.1Decomp}) converges in distribution to $T \sim N(0, \sigma_{T,m,L}^2)$, where the asymptotic variance is given by
\bal
\sigma_{T,m,L}^2 &= \beta \frac{ \sum_{1 \leq i,j \leq m} \frac{ A_i(\beta, m, L)A_j(\beta, m, L) (1 - \alpha)_{(i-1)}(1 - \alpha)_{(i-1)} }{\beta^{i+j} c^{i+j-2}} \cdot ij}{ \tilde{f}^2 }. 
\eal
The proof follows by another application of Slutsky's theorem. 
\end{proof}

\begin{lemma} \label{homozygosityLgroupsLevel2.2CLT}
The stable contribution satisfies
\bal
X_{m,L}(\alpha, \theta_0, \beta, \theta) \xrightarrow{D} N(0, \sigma_{X,m,L}^2),
\eal
as $\theta_0, \theta \to \infty$ such that $\frac{\theta_0}{\theta} \to c$, where 
\bal
\sigma_{X,m,L}^2 &= \frac{\sum_{j=1}^{2m} \tilde{A}_j(\beta, m, L) \frac{(1 - \alpha)_{(j-1)}}{\beta^j c^{j-1}} -  \sum_{1 \leq i,j \leq m} A_i(\beta, m, L) A_j(\beta, m, L)  \frac{(1 - \alpha)_{(i+j-1)}}{\beta^{i+j} c^{i+j-1}}  }{\left( \sum_{j=1}^m A_j(\beta, m, L) \frac{(1 - \alpha)_{(j-1)}}{\beta^j c^{j-1}} \right)^2}
\eal
and the coefficients $\{\tilde{A}_j(\beta, m, L)\}_{1 \leq j \leq 2m}$ are defined by
\bal
\tilde{A}_j(\beta, m, L) = \sum_{\bm{m}_1 \in M_{m,L}} \sum_{\bm{m}_2 \in M_{m,L}} \sum_{\bm{j} \in M_{j,L}} \prod_{k=1}^L \Cs(m_{1k} + m_{2k}, j_k, \beta).
\eal
\end{lemma}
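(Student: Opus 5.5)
Conditionally on $(\bm{V},\gamma_{1/\beta})$, the stable contribution $X_{m,L}$ is a sum of independent, conditionally centered terms. The plan is therefore a conditional CLT (Lindeberg--Feller, or conditional characteristic functions), after which we integrate out the conditioning. Write
\[
X_{m,L}=\sum_{i\ge1}\xi_i,\qquad \xi_i=\frac{1}{\sqrt{\theta}\,\tilde f}\sum_{\bm m\in M_{m,L}}\Bigl(\prod_{\ell=1}^L W_{\ell i}^{m_\ell}-\E\Bigl[\prod_{\ell=1}^L W_{\ell i}^{m_\ell}\Bigm\vert \bm V,\gamma_{1/\beta}\Bigr]\Bigr).
\]
Given $(\bm V,\gamma_{1/\beta})$, the $L$ subordinators $\sigma_\ell$ are independent, and their increments over the disjoint intervals determined by $\bm V$ are independent, each distributed as $\sigma_{\beta,\theta,\ell}(V_i)$. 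Hence the $\xi_i$ are conditionally independent with mean zero.

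\textbf{Conditional variance.} Compute $\sum_i\E[\xi_i^2\mid\bm V,\gamma_{1/\beta}]$ by applying Lemma \ref{incrementsconditionalmean} group by group. For fixed $i$, conditional independence over $\ell$ gives
\[
\E\Bigl[\prod_\ell W_{\ell i}^{m_{1\ell}+m_{2\ell}}\Bigm\vert\cdot\Bigr]=\prod_\ell\sum_{j_\ell}\Cs(m_{1\ell}+m_{2\ell},j_\ell,\beta)\gamma_{1/\beta}^{j_\ell}V_i^{j_\ell}.
\]
Summing over $\bm m_1,\bm m_2$ and collecting by $j=\sum_\ell j_\ell$ yields $\sum_{j=1}^{2m}\tilde A_j\gamma_{1/\beta}^jV_i^j$. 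The squared conditional mean gives $\sum_{i',j'}A_{i'}A_{j'}\gamma_{1/\beta}^{i'+j'}V_i^{i'+j'}$, exactly as in Lemma \ref{extendedincrementsmean}. Summing over $i$, the conditional variance is
\[
\frac{1}{\theta\tilde f^2}\Bigl(\sum_{j=1}^{2m}\tilde A_j\gamma_{1/\beta}^jH_j(\alpha,\theta_0)-\sum_{i',j'}A_{i'}A_{j'}\gamma_{1/\beta}^{i'+j'}H_{i'+j'}(\alpha,\theta_0)\Bigr).
\]
By the law of large numbers $\beta\gamma_{1/\beta}/\theta\to1$ a.s./in probability, and Lemma \ref{asconvHomozyg} gives $\theta_0^{j-1}H_j(\alpha,\theta_0)\to(1-\alpha)_{(j-1)}$. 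Then $\gamma_{1/\beta}^jH_j\sim\theta\,(1-\alpha)_{(j-1)}/(\beta^jc^{j-1})$, so the conditional variance converges in probability to $\sigma^2_{X,m,L}$.

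\textbf{Lindeberg/Lyapunov.} This is the main obstacle. I would verify a Lyapunov condition with fourth moments, showing $\sum_i\E[\xi_i^4\mid\bm V,\gamma_{1/\beta}]\to0$ in probability. The fourth conditional moment of $\xi_i$ is bounded by a combination of the conditional moments $\E[\prod_\ell W_{\ell i}^{n_\ell}\mid\cdot]$ with $\sum n_\ell\le 4m$. By Lemma \ref{incrementsconditionalmean}, each of these is a polynomial $\sum_{j\le 4m}c_j\gamma_{1/\beta}^jV_i^j$. After summing over $i$, each term is of order $\gamma_{1/\beta}^jH_j(\alpha,\theta_0)\asymp\theta^j\theta_0^{-(j-1)}\asymp\theta$. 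With the prefactor $(\sqrt\theta\tilde f)^{-4}=O(\theta^{-2})$, the sum is $O_P(\theta^{-1})\to0$. Since the conditional moment formulas are exact polynomials in $\gamma_{1/\beta}$ and the $V_i$, this reduces once more to Lemma \ref{asconvHomozyg}. If an almost sure statement is needed to condition cleanly, one can pass to subsequences along which the convergences hold a.s.

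\textbf{Conclusion.} By the conditional Lindeberg--Feller theorem,
\[
\E[e^{itX_{m,L}}\mid\bm V,\gamma_{1/\beta}]\to e^{-t^2\sigma^2_{X,m,L}/2}
\]
in probability, and bounded convergence gives the unconditional limit $N(0,\sigma^2_{X,m,L})$. This conditional form of the statement is also the form needed later for asymptotic independence from $T_{m,L}$ and $Y_{m,L}$, which are measurable with respect to $(\bm V,\gamma_{1/\beta})$.
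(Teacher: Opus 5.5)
Your proposal is correct and follows the paper's argument. You condition on $(\bm V,\gamma_{1/\beta})$, compute the conditional variance via Lemma~\ref{incrementsconditionalmean}, and pass to the limit with the law of large numbers for $\gamma_{1/\beta}$ and Lemma~\ref{asconvHomozyg}. You then check a fourth-moment Lyapunov condition of order $\theta^{-1}$ and remove the conditioning by bounded convergence. The one difference is that the paper first truncates the sum at $\lfloor\theta_0^r\rfloor$, kills the tail with a first-moment bound, and applies Lyapunov to a finite array. You instead apply Lindeberg--Feller directly to the conditionally square-summable infinite array, using the untruncated statement of Lemma~\ref{asconvHomozyg} and a subsequence argument for the conditioning, and this is equally valid.
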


\begin{proof}
Let 
\bal
X_{m,L,1}(\alpha, \theta_0, \beta, \theta) &= \frac{1}{\sqrt{\theta} \tilde{f}} \sum_{k=1}^{\lfloor \theta_0^r \rfloor} \sum_{\bm{m} \in M_{m,L}} \left( \prod_{\ell=1}^L W_{\ell k}^{m_\ell} - \E\left[ \prod_{\ell=1}^L W_{\ell k}^{m_\ell} \middle\vert \bm{V}, \gamma_{1/\beta}\right] \right) \\
X_{m,L,2}(\alpha, \theta_0, \beta, \theta) &= \frac{1}{\sqrt{\theta} \tilde{f}} \sum_{k = \lfloor \theta_0^r \rfloor + 1}^\infty \sum_{\bm{m} \in M_{m,L}} \left( \prod_{\ell=1}^L W_{\ell k}^{m_\ell} - \E\left[ \prod_{\ell=1}^L W_{\ell k}^{m_\ell} \middle\vert \bm{V}, \gamma_{1/\beta}\right] \right)
\eal
where $r$ is an integer such that $r = r(\alpha) = \lfloor \frac{1}{1 - \alpha} \rfloor + 1$.

First we show that the tail, $X_{m,L,2}(\alpha, \theta_0, \beta, \theta)$, converges to $0$ as $\theta_0, \theta \to \infty$ such that $\frac{\theta_0}{\theta} \to c$. By Lemma \ref{level1weightspartialsum} and direct computation, 
\bal
\sum_{k= N + 1}^\infty \E[V_k^j] &= \sum_{k= 1}^\infty \E[V_k^j] - \sum_{k=1}^N \E[V_k^j] = \frac{(1 - \alpha)_{(j-1)}}{(1 +\theta_0)_{(j-1)}} - \frac{(1 - \alpha)_{(j-1)}}{(1 +\theta_0)_{(j-1)}}\left[ 1 - \frac{(\theta_0 + N\alpha)\prod_{s = 1}^{N-1} \frac{\theta_0 + s \alpha}{\theta_0 + s\alpha + j} }{\theta_0 + j} \right] \\
&= \frac{(1 - \alpha)_{(j-1)}}{(1 +\theta_0)_{(j-1)}}  \frac{(\theta_0 + N\alpha) \prod_{s = 1}^{N-1} \frac{\theta_0 + s \alpha}{\theta_0 + s\alpha + j} }{\theta_0 + j} = \frac{(1 - \alpha)_{(j-1)}}{(1 +\theta_0)_{(j-1)}} \frac{\theta_0 + N\alpha + j}{\theta_0 + j} \prod_{s = 1}^{N} \frac{\theta_0 + s \alpha}{\theta_0 + s\alpha + j}
\eal
for all $N \geq 1$. In particular, setting $N = \lfloor \theta_0^r \rfloor$ gives
\bal
\sum_{k= \lfloor \theta_0^r \rfloor + 1}^\infty \E[V_k^j] = \frac{(1 - \alpha)_{(j-1)}}{(1 +\theta_0)_{(j-1)}} \frac{\theta_0 + \lfloor \theta_0^r \rfloor\alpha + j}{\theta_0 + j} \prod_{s = 1}^{\lfloor \theta_0^r \rfloor} \frac{\theta_0 + s \alpha}{\theta_0 + s\alpha + j}.
\eal
Fix $\epsilon > 0$. By Chebyshev's inequality, 
\bal
&P( |X_{m2}(\alpha, \theta_0, \beta, \theta)| > \epsilon) \leq \frac{2}{\epsilon \sqrt{\theta} \tilde{f}} \sum_{k = \lfloor \theta_0^r \rfloor + 1}^\infty \sum_{\bm{m} \in M_{m,L}} \E\left[ \prod_{\ell=1}^L W_{\ell k}^{m_\ell} \right]  \\
&= \frac{2}{\epsilon \sqrt{\theta}\tilde{f}} \sum_{j=1}^m A_j(\beta, m, L) \frac{\prod_{s=0}^{j-1} (\theta + s\beta)}{\beta^j} \sum_{k= \lfloor \theta_0^r \rfloor + 1}^\infty \E[V_k^j] \\
&= \frac{2}{\epsilon \sqrt{\theta}\tilde{f}} \sum_{j=1}^m A_j(\beta, m, L) \frac{ \prod_{s=0}^{j-1} (\theta + s\beta)}{\beta^j} \frac{(1 - \alpha)_{(j-1)}}{(1 +\theta_0)_{(j-1)}} \frac{\theta_0 + \lfloor \theta_0^r \rfloor\alpha + j}{\theta_0 + j} \prod_{s = 1}^{\lfloor \theta_0^r \rfloor} \frac{\theta_0 + s \alpha}{\theta_0 + s\alpha + j} \\
&\sim \frac{2}{\epsilon c \sqrt{\theta} \tilde{f}} \sum_{j=1}^m A_j(\beta, m, L) \frac{(1 - \alpha)_{(j-1)}}{\beta^j c^{j-1}} \cdot \theta_0 \cdot \alpha \theta_0^{r-1} \cdot \theta_0^{-\frac{j(r-1)}{\alpha}} \\
&< \frac{2\alpha}{\epsilon c \sqrt{\theta}} \theta_0^{r - \frac{r-1}{\alpha}} \\
&\to 0
\eal
as $\theta_0, \theta \to \infty$ such that $\frac{\theta_0}{\theta} \to c$, using the fact that $r - \frac{r-1}{\alpha} < 0$ for $r = \lfloor \frac{1}{1-\alpha} \rfloor + 1$ and $\alpha \in [0,1)$. 

Thus it suffices to show that the characteristic function of the truncated sum, $X_{m,L,1}(\alpha, \theta_0, \beta, \theta)$, converges to the characteristic function of $N(0, \sigma_{X,m,L}^2)$. To this end, we first show that the conditional characteristic function of $X_{m,L,1}(\alpha, \theta_0, \beta, \theta)$ given $\bm{V}, \gamma_{1/\beta}$ converges to the characteristic function of $N(0, \sigma_{X,m,L}^2)$. First we identify the conditional variance $\sigma_{X,m,L}^2$. By direct computation and using Lemma \ref{extendedincrementsmean}, 
\bal
&\Var\left( X_{m,L,1}(\alpha, \theta_0, \beta, \theta) \middle\vert \bm{V}, \gamma_{1/\beta} \right) \\
&= \frac{1}{\theta \tilde{f}^2} \sum_{k=1}^{\lfloor \theta_0^r \rfloor} \E\left[ \left( \sum_{\bm{m} \in M_{m,L}} \prod_{\ell=1}^L W_{\ell k}^{m_\ell} - \E\left[ \sum_{\bm{m} \in M_{m,L}} \prod_{\ell=1}^L W_{\ell k}^{m_\ell} \middle\vert \bm{V}, \gamma_{1/\beta}\right] \right)^2 \middle\vert \bm{V}, \gamma_{1/\beta}\right] \\
&= \frac{1}{\theta \tilde{f}^2} \sum_{k=1}^{\lfloor \theta_0^r \rfloor} \left( \E\left[ \left( \sum_{\bm{m} \in M_{m,L}} \prod_{\ell=1}^L W_{\ell k}^{m_\ell} \right)^2 \middle\vert \bm{V}, \gamma_{1/\beta}\right] - \left( \E\left[ \sum_{\bm{m} \in M_{m,L}} \prod_{\ell=1}^L W_{\ell k}^{m_\ell} \middle\vert \bm{V}, \gamma_{1/\beta}\right]\right)^2 \right) \\
&= \frac{1}{\theta \tilde{f}^2} \sum_{k=1}^{\lfloor \theta_0^r \rfloor} \left( \sum_{\bm{m}_1 \in M_{m,L}} \sum_{\bm{m}_2 \in M_{m,L}} \prod_{\ell=1}^L \sum_{j=1}^{m_{1\ell} + m_{2\ell}} \Cs(m_{1\ell} + m_{2\ell}, j, \beta) \gamma_{1/\beta}^j V_k^j - \left( \sum_{j=1}^m A_j(\beta, m, L) \gamma_{1/\beta}^j V_k^j \right)^2 \right) \\
&= \frac{1}{\theta \tilde{f}^2} \sum_{k=1}^{\lfloor \theta_0^r \rfloor} \left( \sum_{j=1}^{2m} \tilde{A}_j(\beta, m, L) \gamma_{1/\beta}^j V_k^j - \sum_{1 \leq i,j \leq m} A_i(\beta, m, L) A_j(\beta, m, L) \gamma_{1/\beta}^{i+j} V_k^{i+j} \right)
\eal
where the coefficients $\{\tilde{A}_j(\beta, m, L)\}_{1 \leq j \leq 2m}$ are defined as in the lemma statement. Therefore by the law of large numbers for a sequence of iid gamma random variables and Lemma \ref{asconvHomozyg},
\bal
&\Var\left( X_{m,L,1}(\alpha, \theta_0, \beta, \theta) \middle\vert \bm{V}, \gamma_{1/\beta} \right) \\
&= \frac{1}{\theta \tilde{f}^2} \left( \sum_{j=1}^{2m} \tilde{A}_j(\beta, m, L) \frac{\prod_{s=0}^{j-1} (\theta + s\beta)}{\beta^j} \left( \frac{\beta^j \gamma_{1/\beta}^j}{\prod_{s=0}^{j-1} (\theta + s\beta)} \right) \frac{(1 - \alpha)_{(j-1)}}{\theta_0^{j-1}} \left( \frac{\theta_0^{j-1}}{(1 - \alpha)_{(j-1)}} \sum_{k=1}^{\lfloor \theta_0^r \rfloor} V_k^j \right) \right. \\
& \qquad  \left. - \sum_{1 \leq i,j \leq m} A_i(\beta, m, L) A_j(\beta, m, L) \frac{\prod_{s=0}^{i+j-1} (\theta + s\beta)}{\beta^{i+j}} \left( \frac{\beta^{i+j} \gamma_{1/\beta}^{i+j}}{\prod_{s=0}^{i+j-1} (\theta + s\beta)} \right) \right. \\
&\qquad \qquad \left. \times \frac{(1 - \alpha)_{(i+j-1)}}{\theta_0^{i+j-1}}  \left( \frac{\theta_0^{i+j-1}}{(1 - \alpha)_{(i+j-1)}} \sum_{k=1}^{\lfloor \theta_0^r \rfloor} V_k^{i+j} \right)  \right) \\
&\xrightarrow{P} \frac{ \sum_{j=1}^{2m} \tilde{A}_j(\beta, m, L) \frac{(1 - \alpha)_{(j-1)}}{\beta^j c^{j-1}} -  \sum_{1 \leq i,j \leq m} A_i(\beta, m, L) A_j(\beta, m, L)  \frac{(1 - \alpha)_{(i+j-1)}}{\beta^{i+j} c^{i+j-1}}}{\tilde{f}^2} =: \sigma_{X,m,L}^2
\eal
as $\theta_0, \theta \to \infty$ such that $\frac{\theta_0}{\theta} \to c$. 

By similar computations, 
\bal
&\sum_{k=1}^{\lfloor \theta_0^r \rfloor} \E\left[\left|\frac{\sum_{\bm{m} \in M_{m,L}} \prod_{\ell = 1}^L W_{\ell k}^{m_\ell} - \E[\sum_{\bm{m} \in M_{m,L}} \prod_{\ell = 1}^L W_{\ell k}^{m_\ell} \mid \bm{V}, \gamma_{1/\beta}]}{\sqrt{\theta} \tilde{f}}\right|^4 \middle\vert \bm{V}, \gamma_{1/\beta} \right] \\
&= \frac{1}{\theta^2 \tilde{f}^4} \sum_{k=1}^{\lfloor \theta_0^r \rfloor} \E\left[\left(\sum_{\bm{m} \in M_{m,L}} \prod_{\ell = 1}^L W_{\ell k}^{m_\ell} - \E\left[\sum_{\bm{m} \in M_{m,L}} \prod_{\ell = 1}^L W_{\ell k}^{m_\ell} \mid \bm{V}, \gamma_{1/\beta}\right]\right)^4 \middle\vert \bm{V}, \gamma_{1/\beta} \right] \\
&= \frac{1}{\theta^2 \tilde{f}^4} \sum_{k=1}^{\lfloor \theta_0^r \rfloor} \left( \E\left[ \left( \sum_{\bm{m} \in M_{m,L}} \prod_{\ell = 1}^L W_{\ell k}^{m_\ell} \right)^4 \middle\vert \bm{V}, \gamma_{1/\beta}\right] \right. \\
&\qquad \qquad \qquad \left. - 4\E\left[ \left( \sum_{\bm{m} \in M_{m,L}} \prod_{\ell = 1}^L W_{\ell k}^{m_\ell} \right)^3 \middle\vert \bm{V}, \gamma_{1/\beta}\right] \E\left[ \left( \sum_{\bm{m} \in M_{m,L}} \prod_{\ell = 1}^L W_{\ell k}^{m_\ell} \right) \middle\vert \bm{V}, \gamma_{1/\beta}\right] \right. \\
& \qquad \qquad \qquad \left. + 6\E\left[ \left( \sum_{\bm{m} \in M_{m,L}} \prod_{\ell = 1}^L W_{\ell k}^{m_\ell} \right)^2 \middle\vert \bm{V}, \gamma_{1/\beta}\right] \left(\E\left[ \left( \sum_{\bm{m} \in M_{m,L}} \prod_{\ell = 1}^L W_{\ell k}^{m_\ell} \right) \middle\vert \bm{V}, \gamma_{1/\beta}\right] \right)^2  \right. \\
& \qquad \qquad \qquad \left. - 3\left(\E\left[ \left( \sum_{\bm{m} \in M_{m,L}} \prod_{\ell = 1}^L W_{\ell k}^{m_\ell} \right) \middle\vert \bm{V}, \gamma_{1/\beta}\right] \right)^4 \right),
\eal
with the following asymptotics 
\bal
&\frac{1}{\theta^2 \tilde{f}^4} \sum_{k=1}^{\lfloor \theta_0^r \rfloor} \E\left[ \left( \sum_{\bm{m} \in M_{m,L}} \prod_{\ell = 1}^L W_{\ell k}^{m_\ell} \right)^4 \middle\vert \bm{V}, \gamma_{1/\beta}\right] \\
&\quad \asymp \frac{1}{\theta \tilde{f}^4} \sum_{j=1}^{4m} \left( \sum_{\bm{m}_1, \bm{m}_2, \bm{m}_3, \bm{m}_4 \in M_{m,L}} \sum_{\bm{j} \in M_{j,L}} \prod_{\ell = 1}^L \Cs(m_{1\ell} + m_{2\ell} + m_{3\ell} + m_{4\ell}, j_\ell, \beta) \right) \frac{(1 - \alpha)_{(j-1)}}{\beta^j c^{j-1}} \\
&\frac{1}{\theta^2 \tilde{f}^4} \sum_{k=1}^{\lfloor \theta_0^r \rfloor} \E\left[ \left( \sum_{\bm{m} \in M_{m,L}} \prod_{\ell = 1}^L W_{\ell k}^{m_\ell} \right)^3 \middle\vert \bm{V}, \gamma_{1/\beta}\right] \E\left[ \left( \sum_{\bm{m} \in M_{m,L}} \prod_{\ell = 1}^L W_{\ell k}^{m_\ell} \right) \middle\vert \bm{V}, \gamma_{1/\beta}\right] \\
&\quad \asymp \frac{1}{\theta \tilde{f}^4} \sum_{\substack{1 \leq i \leq 3m \\ 1 \leq j \leq m}} \left( \sum_{\bm{m}_1, \bm{m}_2, \bm{m}_3 \in M_{m,L}} \sum_{\bm{j} \in M_{j,L}} \prod_{\ell = 1}^L \Cs(m_{1\ell} + m_{2\ell} + m_{3\ell}, j_\ell, \beta) \right) A_j(\beta, m, L) \frac{(1 - \alpha)_{(i+j-1)}}{\beta^{i+j} c^{i+j-1}} \\
&\frac{1}{\theta^2 \tilde{f}^4} \sum_{k=1}^{\lfloor \theta_0^r \rfloor} \E\left[ \left( \sum_{\bm{m} \in M_{m,L}} \prod_{\ell = 1}^L W_{\ell k}^{m_\ell} \right)^2 \middle\vert \bm{V}, \gamma_{1/\beta}\right] \left(\E\left[ \left( \sum_{\bm{m} \in M_{m,L}} \prod_{\ell = 1}^L W_{\ell k}^{m_\ell} \right) \middle\vert \bm{V}, \gamma_{1/\beta}\right] \right)^2 \\
&\quad \asymp \frac{1}{\theta \tilde{f}^4} \sum_{\substack{1 \leq j \leq 2m \\ 1 \leq s,t \leq m}} \tilde{A}_j(\beta, m, L) A_s(\beta, m, L) A_t(\beta, m, L) \frac{(1 - \alpha)_{(j+s+t-1)}}{\beta^{j+s+t} c^{j+s+t-1}} \\ 
&\frac{1}{\theta^2 \tilde{f}^4} \sum_{k=1}^{\lfloor \theta_0^r \rfloor} \left(\E\left[ \left( \sum_{\bm{m} \in M_{m,L}} \prod_{\ell = 1}^L W_{\ell k}^{m_\ell} \right) \middle\vert \bm{V}, \gamma_{1/\beta}\right] \right)^4 \\
&\quad \asymp \frac{1}{\theta \tilde{f}^4} \sum_{1 \leq i, j, s, t \leq m} A_i(\beta, m, L)A_j(\beta, m, L)A_s(\beta, m, L)A_t(\beta, m, L) \frac{(1 - \alpha)_{(i+j+s+t-1)}}{\beta^{i+j+s+t} c^{i+j+s+t-1}} 
\eal
as $\theta_0, \theta \to \infty$ such that $\frac{\theta_0}{\theta} \to c$. Combining the above, it follows that 
\bal
\sum_{k=1}^{\lfloor \theta_0^r \rfloor} \E\left[\left|\frac{W_k^m - \E[W_k^m \mid \bm{V}, \gamma_{1/\beta}]}{\sqrt{\theta} \tilde{f}}\right|^4 \middle\vert \bm{V}, \gamma_{1/\beta} \right] \asymp \frac{1}{\theta} \xrightarrow{P} 0, 
\eal
and in particular, 
\bal
\frac{\sum_{k=1}^{\lfloor \theta_0^r \rfloor} \E\left[\left|\frac{W_k^m - \E[W_k^m \mid \bm{V}, \gamma_{1/\beta}]}{\sqrt{\theta} \tilde{f}}\right|^4 \middle\vert \bm{V}, \gamma_{1/\beta} \right] }{\sqrt{\Var(X_{m1}(\alpha, \theta_0, \beta, \theta) \mid \bm{V}, \gamma_{1/\beta})}^4} \xrightarrow{P} 0
\eal
as $\theta_0, \theta \to \infty$ such that $\frac{\theta_0}{\theta} \to c$. 

Therefore conditional on $\bm{V}$ and $\gamma_{1/\beta}$, the Lyapunov condition with $\delta = 2$ is satisfied. This implies that the conditional Lindeberg-Feller condition is satisfied, and so the conditional characteristic function of $X_{m,L,1}(\alpha, \theta_0, \beta, \theta)$ given $\bm{V}, \gamma_{1/\beta}$ converges to the characteristic function of $N(0, \sigma_{X,m,L}^2)$. Finally by the dominated convergence theorem, the unconditional characteristic function of $X_{m1}(\alpha, \theta_0, \beta, \theta)$ also converges to the characteristic function of $N(0, \sigma_X^2)$, and the result follows. 
\end{proof}

\subsection{Joint convergence in the HPYP with $L$ groups}

Let $Z_k(\alpha, \theta_0, \beta, \theta) = \sqrt{\theta}\left( \frac{\sigma_{\beta, \theta,k}}{\theta} - 1 \right)$ and let
\bal
X_{1,k}(\alpha, \theta_0, \beta, \theta) = \frac{1}{\sqrt{\theta}} \sum_{i=1}^\infty \left( W_{ki}(\alpha, \theta_0, \beta, \theta) - \beta \gamma_{1/\beta} V_i \right) \quad \text{and} \quad T_1(\alpha, \theta_0, \beta, \theta) &= \sqrt{\theta} \left( \frac{\beta \gamma_{1/\beta}}{\theta} - 1 \right),
\eal
so that $Z_k(\alpha, \theta_0, \beta, \theta) = X_{1,k}(\alpha, \theta_0, \beta, \theta) + T_1(\alpha, \theta_0, \beta, \theta)$. 

\begin{lemma}\label{JointCLTXTLGroups}
For $m \geq 2$ and $L \geq 1$, the following multivariate CLTs hold
\bal
\bm{W}_{X,L} &= (X_{1,k}(\alpha, \theta_0, \beta, \theta), X_{m,L}(\alpha, \theta_0, \beta, \theta)) \xrightarrow{D} N(\bm{0}, \bm{\Sigma}_{X,L}), \qquad \text{for all $1 \leq k \leq L$} \\
\bm{W}_{T,L} &= (T_1(\alpha, \theta_0, \beta, \theta), T_{m,L}(\alpha, \theta_0, \beta, \theta)) \xrightarrow{D} N(\bm{0}, \bm{\Sigma}_{T,L}) 
\eal
as $\theta_0, \theta \to \infty$ such that $\frac{\theta_0}{\theta} \to c$, where the covariance matrices are given by
\bal
\bm{\Sigma}_{X,L} &= \begin{pmatrix}
1 - \beta & B_k(\alpha, \beta, c, m, L) \\ 
B_k(\alpha, \beta, c, m, L) & \sigma_{X,m,L}^2 
\end{pmatrix} \\
\bm{\Sigma}_{T,L} &= \begin{pmatrix}
\beta & \beta \frac{\sum_{j=1}^m j A_j(\beta, m, L) \frac{(1 - \alpha)_{(j-1)}}{\beta^j c^{j-1}}}{\sum_{j=1}^m A_j(\beta, m, L) \frac{(1 - \alpha)_{(j-1)}}{\beta^j c^{j-1}}}  \\ 
\beta \frac{\sum_{j=1}^m j A_j(\beta, m, L) \frac{(1 - \alpha)_{(j-1)}}{\beta^j c^{j-1}}}{\sum_{j=1}^m A_j(\beta, m, L) \frac{(1 - \alpha)_{(j-1)}}{\beta^j c^{j-1}}}  & \sigma_{T,m,L}^2 
\end{pmatrix},
\eal
with $\sigma_{X,m,L}^2$ the variance from Lemma \ref{homozygosityLgroupsLevel2.2CLT}, $\sigma_{T,m,L}^2$ the variance from Lemma \ref{homozygosityLgroupsLevel2.1CLT}, and 
\bal
B_k(\alpha, \beta, c, m, L) = \frac{ \sum_{j=1}^m \left(\sum_{\bm{m} \in M_{m,L}} \sum_{\bm{j} \in M_{j, L}} (m_k - \beta j_k) \prod_{\ell = 1}^L \Cs(m_\ell, j_\ell, \beta) \right) \frac{(1 - \alpha)_{(j-1)}}{\beta^j c^{j-1}} }{\sum_{j=1}^m A_j(\beta, m, L) \frac{(1 - \alpha)_{(j-1)}}{\beta^j c^{j-1}}}.
\eal
\end{lemma}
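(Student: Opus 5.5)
The plan is to treat the two pairs separately, reusing the machinery of Lemmas \ref{homozygosityLgroupsLevel2.2CLT} and \ref{homozygosityLgroupsLevel2.1CLT} and passing to the joint statement by the Cram\'er--Wold device. Fix $(a,b)\in\R^2$.

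\emph{The pair $\bm{W}_{X,L}$.} Write
\[
aX_{1,k}+bX_{m,L}=\frac{1}{\sqrt{\theta}}\sum_{i}\xi_i,
\]
where
\[
\xi_i=a\bigl(W_{ki}-\E[W_{ki}\mid \bm V,\gamma_{1/\beta}]\bigr)+\frac{b}{\tilde f}\sum_{\bm m\in M_{m,L}}\Bigl(\prod_\ell W_{\ell i}^{m_\ell}-\E\Bigl[\prod_\ell W_{\ell i}^{m_\ell}\Bigm|\bm V,\gamma_{1/\beta}\Bigr]\Bigr).
\]
This uses Lemma \ref{incrementsconditionalmean} with $m=1$, which gives $\E[W_{ki}\mid\bm V,\gamma_{1/\beta}]=\Cs(1,1,\beta)\gamma_{1/\beta}V_i=\beta\gamma_{1/\beta}V_i$. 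Conditionally on $(\bm V,\gamma_{1/\beta})$ the $\xi_i$ are independent and centered, because the subordinators $\sigma_\ell$ have independent increments and are independent across $\ell$.

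First I truncate at $\lfloor\theta_0^r\rfloor$ exactly as in the proof of Lemma \ref{homozygosityLgroupsLevel2.2CLT}. The tail of the $X_{1,k}$ part is negligible by the same Chebyshev bound with $j=1$ moments. Next I compute the conditional variance $\theta^{-1}\sum_i\E[\xi_i^2\mid\cdot]$. The $a^2$ term is $\theta^{-1}\sum_i\Cs(2,2,\beta)\gamma^2V_i^2+\Cs(2,1,\beta)\gamma V_i-\beta^2\gamma^2V_i^2$. Since $\Cs(2,2,\beta)=\beta^2$ and $\Cs(2,1,\beta)=\beta(1-\beta)$, this is $\theta^{-1}\beta(1-\beta)\gamma_{1/\beta}\to 1-\beta$ by the law of large numbers for gamma. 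The $b^2$ term gives $\sigma_{X,m,L}^2$, as already shown.

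The cross term is the real work. I need the conditional moment of $W_{ki}\prod_\ell W_{\ell i}^{m_\ell}$, which by independence across $\ell$ and Lemma \ref{incrementsconditionalmean} factors as $\prod_{\ell\ne k}\sum_{j_\ell}\Cs(m_\ell,j_\ell,\beta)\gamma^{j_\ell}V_i^{j_\ell}$ times $\sum_{j}\Cs(m_k+1,j,\beta)\gamma^jV_i^j$. I then use the recursion
\[
\Cs(n+1,j,\beta)=(n-\beta j)\Cs(n,j,\beta)+\beta\Cs(n,j-1,\beta),
\]
which I would verify from the definition or Charalambides. The $\beta\Cs(n,j-1,\beta)$ piece cancels exactly against the product of conditional means $\beta\gamma V_i\cdot\E[\prod W^{m}\mid\cdot]$. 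What remains is $\sum_j\sum_{\bm m,\bm j}(m_k-\beta j_k)\prod_\ell\Cs(m_\ell,j_\ell,\beta)\gamma^jV_i^j$. Dividing by $\theta\tilde f$, summing over $i$, and applying Lemma \ref{asconvHomozyg} together with the gamma law of large numbers produces $B_k$.

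A conditional Lyapunov bound follows by the same fourth-moment order counting as before, each term being of order $\theta^{-1}$. Conditional characteristic functions then converge to those of a normal law with the stated quadratic form, and dominated convergence removes the conditioning.

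\emph{The pair $\bm{W}_{T,L}$.} This is simpler. By the decomposition (\ref{Level2.1Decomp}), $T_{m,L}$ equals, up to an $o_P(1)$ term,
\[
\tilde f^{-1}\sum_j \frac{A_j(1-\alpha)_{(j-1)}}{\beta^jc^{j-1}}\sqrt\theta\bigl((\beta\gamma_{1/\beta}/\theta)^j-1\bigr).
\]
Both $T_1$ and $T_{m,L}$ are therefore asymptotically linear functionals of the single vector in (\ref{gammamultivariateCLT}), whose first coordinate is $T_1$. The joint limit is Gaussian with covariance read off from $\bm\Sigma$ in (\ref{GammaCovMatrix}): $\cov(T_1,T_{m,L})\to\beta\sum_j jA_j(1-\alpha)_{(j-1)}\beta^{-j}c^{1-j}/\tilde f$. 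Slutsky's theorem completes this part.

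The main obstacle is the cross-covariance in the $X$ pair. It needs the generalized factorial coefficient recursion applied inside the multi-index sum, and the exact cancellation of the $\beta\Cs(n,j-1,\beta)$ part against the mean product. I would first check the recursion with the $L=1$, $m=2$ case to confirm that the sign convention $(m_k-\beta j_k)$ matches $B_k$.
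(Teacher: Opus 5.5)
Your proposal is correct and follows the paper's proof essentially step for step. For $\bm W_{X,L}$ you use the same truncation at $\lfloor\theta_0^r\rfloor$, the same conditional variance computation with the cross term handled by the recurrence $\Cs(n+1,j,\beta)=(n-\beta j)\Cs(n,j,\beta)+\beta\Cs(n,j-1,\beta)$, and the same conditional Lyapunov plus dominated convergence argument; for $\bm W_{T,L}$ you use the decomposition (\ref{Level2.1Decomp}) together with the gamma vector CLT (\ref{gammamultivariateCLT}).

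Two minor points are in your favour. First, you obtain the $T$-pair's joint limit directly by Slutsky and the continuous mapping theorem, rather than by computing $\E[T_1T_{m,L}]$ as the paper does; this is slightly cleaner. Second, your normalization correctly gives off-diagonal entry $B_k$, whereas the paper's proof carries a stray extra factor $1/\tilde f$ at that point.
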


\begin{proof}
We start with $\bm{W}_{X,L}$. For notational simplicity, in what follows we let $W_{ki} = W_{ki}(\alpha, \theta_0, \beta, \theta)$. It suffices to show the joint convergence
\bal
\bm{\hat{W}}_{X,L} = (\hat{X}_{1,k}(\alpha, \theta_0, \beta, \theta), \hat{X}_{m,L}(\alpha, \theta_0, \beta, \theta)) \xrightarrow{D} N(\bm{0}, \bm{\Sigma}_{X,L}),
\eal
where
\bal
\hat{X}_{1,k}(\alpha, \theta_0, \beta, \theta) &= \frac{1}{\sqrt{\theta}} \sum_{i=1}^{\lfloor \theta_0^r \rfloor} (W_{ki} - \beta \gamma_{1/\beta} V_i) \\
\hat{X}_{m,L}(\alpha, \theta_0, \beta, \theta) &= \frac{1}{\sqrt{\theta} \tilde{f}} \sum_{i=1}^{\lfloor \theta_0^r \rfloor} \sum_{\bm{m} \in M_{m,L}} \left( \prod_{\ell = 1}^L W_{\ell i}^{m_\ell} - \E\left[ \prod_{\ell = 1}^L W_{\ell i}^{m_\ell} \middle\vert \bm{V}, \gamma_{1/\beta} \right] \right).
\eal
To this end, we show that the conditional characteristic function of $\bm{\hat{W}}_{X,L}$ converges to the characteristic function of $N(\bm{0}, \bm{\Sigma}_{X,L})$. The proof is similar to the proof of Lemma \ref{homozygosityLgroupsLevel2.2CLT}, and so we will only highlight the main differences. 

Let $\bm{t} = (t_1, t_2) \in \R^2$. For $i \geq 1$, let
\bal
A_i = W_{ki} - \beta \gamma_{1/\beta} V_i \quad \text{and} \quad B_i = \sum_{\bm{m} \in M_{m,L}} \left( \prod_{\ell = 1}^L W_{\ell i}^{m_\ell} - \E\left[ \prod_{\ell = 1}^L W_{\ell i}^{m_\ell} \middle\vert \bm{V}, \gamma_{1/\beta} \right] \right).
\eal
The conditional variance of $\bm{t} \cdot \bm{\hat{W}}_{X,L}$ is 
\bal
\Var\left( \bm{t} \cdot \bm{\hat{W}}_{X,L} \middle\vert \bm{V}, \gamma_{1/\beta} \right) &= \frac{1}{\theta} \sum_{i=1}^{\lfloor \theta_0^r \rfloor} \E\left[ \left( t_1 A_i + \frac{t_2}{\tilde{f}}B_i \right)^2  \middle\vert \bm{V}, \gamma_{1/\beta} \right] \\
&= \frac{1}{\theta} \sum_{i=1}^{\lfloor \theta_0^r \rfloor} \E\left( t_1^2 A_i^2 + \frac{t_2^2}{\tilde{f}^2} B_i^2 + 2\frac{t_1t_2}{\tilde{f}} A_iB_i \middle\vert \bm{V}, \gamma_{1/\beta} \right). 
\eal
By virtue of Lemma \ref{homozygosityLgroupsLevel2.2CLT}, it suffices to compute the cross term $\E\left[ A_iB_i \middle\vert \bm{V}, \gamma_{1/\beta} \right]$. By direct computation, 
\bal
&\E\left[ A_kB_k \middle\vert \bm{V}, \gamma_{1/\beta} \right] = \sum_{\bm{m} \in M_{m,L}} \E\left[ (W_{ki} - \beta \gamma_{1/\beta} V_i)\left( \prod_{\ell = 1}^L W_{\ell i}^{m_\ell} - \E\left[ \prod_{\ell = 1}^L W_{\ell i}^{m_\ell} \middle\vert \bm{V}, \gamma_{1/\beta} \right] \right) \middle\vert \bm{V}, \gamma_{1/\beta} \right] \\
&= \sum_{\bm{m} \in M_{m,L}} \left( \E\left[ W_{ki}^{m_k+1} \prod_{\ell \neq k} W_{\ell i}^{m_\ell} \middle\vert \bm{V}, \gamma_{1/\beta} \right] - \E[W_{ki} \mid \bm{V}, \gamma_{1/\beta}] \E\left[ \prod_{\ell = 1}^L W_{\ell i}^{m_\ell} \middle\vert \bm{V}, \gamma_{1/\beta} \right] \right) \\
&= \sum_{\bm{m} \in M_{m,L}} \left[ \left( \prod_{\ell \neq k} \sum_{j = 1 \wedge m_\ell}^{m_\ell} \Cs(m_\ell, j, \beta) \gamma_{1/\beta}^j V_i^j \right)\left( \sum_{j = 1}^{m_k+1} \Cs(m_k+1, j, \beta) \gamma_{1/\beta}^j V_i^j \right) \right. \\
& \qquad \left. - \beta \gamma_{1/\beta} V_i \prod_{\ell = 1}^L \sum_{j = 1 \wedge m_\ell}^{m_\ell} \Cs(m_\ell, j, \beta) \gamma_{1/\beta}^j V_i^j \right] \\
&= \sum_{\bm{m} \in M_{m,L}} \left( \prod_{\ell \neq k} \sum_{j = 1 \wedge m_\ell}^{m_\ell} \Cs(m_\ell, j, \beta) \gamma_{1/\beta}^j V_i^j \right)\left( \sum_{j=1}^{m_k+1} \Cs(m_k+1, j, \beta) \gamma_{1/\beta}^j V_i^j - \sum_{j=1}^{m_k} \beta \Cs(m_k, j, \beta) \gamma_{1/\beta}^{j+1} V_i^{j+1} \right) .
\eal
By Theorem 2.18 in \cite{Cha05}, the generalized factorial coefficients satisfy the recurrence relation
\bal
\Cs(m+1, j, \beta) = (m - \beta j) \Cs(m,j,\beta) + \beta \Cs(m, j - 1, \beta). 
\eal
Applying this recurrence and using the facts that $\Cs(m,m+1, \beta) = 0$ and $\Cs(m,0,\beta) = 0$ gives
\bal
\sum_{j=1}^{m_k} (m_k - \beta j) \Cs(m_k,j,\beta) \gamma_{1/\beta}^j V_i^j = \sum_{j=1}^{m_k+1} \Cs(m_k+1, j, \beta) \gamma_{1/\beta}^j V_i^j - \sum_{j=1}^{m_k} \beta \Cs(m_k,j,\beta) \gamma_{1/\beta}^{j+1} V_i^{j+1},
\eal
so that 
\bal
\E\left[ A_kB_k \middle\vert \bm{V}, \gamma_{1/\beta} \right] &= \sum_{\bm{m} \in M_{m,L}} \left( \prod_{\ell \neq k} \sum_{j = 1 \wedge m_\ell}^{m_\ell} \Cs(m_\ell, j, \beta) \gamma_{1/\beta}^j V_i^j \right)\left( \sum_{j=1 \wedge m_k}^{m_k} (m_k - \beta j) \Cs(m_k,j,\beta) \gamma_{1/\beta}^j V_i^j \right) \\
&= \sum_{j=1}^m \sum_{\bm{m} \in M_{m,L}} \sum_{\bm{j} \in M_{j,L}} (m_k - \beta j_k) \prod_{\ell = 1}^L \Cs(m_\ell, j_\ell, \beta) \gamma_{1/\beta}^j V_i^j.
\eal
Therefore combining the above gives 
\bal
\Var\left( \bm{t} \cdot \bm{\hat{W}_{X,L}} \middle\vert \bm{V}, \gamma_{1/\beta} \right) &\xrightarrow{P} t_1^2 (1 - \beta) +  t_2^2 \sigma_{X,m,L}^2 + 2t_1t_2\frac{B_k(\alpha, \beta, c, m, L)}{\tilde{f}}
\eal
as $\theta_0, \theta \to \infty$ such that $\frac{\theta_0}{\theta} \to c$, with $B_k(\alpha, \beta, c, m L)$ defined as in the lemma statement. 
The remainder of the proof follows by a similar conditional Lyapunov central limit theorem argument as in the proof of Lemma \ref{homozygosityLgroupsLevel2.2CLT}. 

Next we turn to $\bm{W}_{T,L}$. Let $\bm{s} = (s_1, s_2) \in \R^2$. Then
\bal
\Var\left( \bm{s} \cdot \bm{W}_{T,L} \right) &= \E\left[ \left(s_1 T_1(\alpha, \theta_0, \beta, \theta) + s_2 T_{m,L}(\alpha, \theta_0, \beta, \theta) \right)^2 \right] \\
&= \E\left[ s_1^2 T_1(\alpha, \theta_0, \beta, \theta)^2 + s_2^2 T_{m,L}(\alpha, \theta_0, \beta, \theta)^2 + 2s_1s_2 T_1(\alpha, \theta_0, \beta, \theta)T_{m,L}(\alpha, \theta_0, \beta, \theta) \right] \\
&= s_1^2 \beta + s_2^2 \sigma_T^2 + 2s_1s_2 \E[T_1(\alpha, \theta_0, \beta, \theta)T_{m,L}(\alpha, \theta_0, \beta, \theta)].
\eal
Using the decomposition for $T_{m,L}(\alpha, \theta_0, \beta, \theta)$ from (\ref{Level2.1Decomp}) and the convergence
\bal
\left(\sqrt{\theta} \left( \left(\frac{\beta \gamma_{1/\beta}}{\theta}\right) - 1\right), \sqrt{\theta} \left( \left(\frac{\beta \gamma_{1/\beta}}{\theta}\right)^2 - 1\right), \ldots, \sqrt{\theta} \left( \left(\frac{\beta \gamma_{1/\beta}}{\theta}\right)^m - 1\right) \right) \xrightarrow{D} N(\bm{0}, \bm{\Sigma})
\eal
from (\ref{gammamultivariateCLT}), with covariance matrix given in (\ref{GammaCovMatrix}), we have that
\bal
&\E[T_1(\alpha, \theta_0, \beta, \theta)T_{m,L}(\alpha, \theta_0, \beta, \theta)] \\
&= \E\left[ \sqrt{\theta} \left( \frac{\beta \gamma_{1/\beta}}{\theta} - 1 \right) \frac{\sqrt{\theta/\theta_0}}{\tilde{f}} \sum_{j=1}^m \frac{A_j(\beta, m, L) (1 - \alpha)_{(j-1)}}{\beta^j (\theta_0/\theta)^{j-1}}  \tilde{H}_j(\alpha, \theta_0) \left( \left(\frac{\beta \gamma_{1/\beta}}{\theta}\right)^j - \frac{\prod_{s=0}^{j-1} (\theta + s\beta)}{\theta^j} \right) \right] \\
&\qquad + \E\left[ \frac{1}{\tilde{f}}\sqrt{\theta} \left( \frac{\beta \gamma_{1/\beta}}{\theta} - 1 \right) \sum_{j=1}^m \frac{A_j(\beta, m, L) (1 - \alpha)_{(j-1)}}{\beta^j (\theta_0/\theta)^{j-1}} \sqrt{\theta}  \left( \left(\frac{\beta \gamma_{1/\beta}}{\theta}\right)^j - \frac{\prod_{s=0}^{j-1} (\theta + s\beta)}{\theta^j} \right) \right] \\
&= \frac{1}{\tilde{f}} \sum_{j=1}^m \frac{A_j(\beta, m, L) (1 - \alpha)_{(j-1)}}{\beta^j (\theta_0/\theta)^{j-1}} \E\left[ \sqrt{\theta} \left( \frac{\beta \gamma_{1/\beta}}{\theta} - 1 \right) \sqrt{\theta}  \left( \left(\frac{\beta \gamma_{1/\beta}}{\theta}\right)^j - \frac{\prod_{s=0}^{j-1} (\theta + s\beta)}{\theta^j} \right) \right] \\
&\to \frac{1}{\tilde{f}} \sum_{j=1}^m \frac{A_j(\beta, m, L) (1 - \alpha)_{(j-1)}}{\beta^j c^{j-1}} \cdot \beta j
\eal
as $\theta_0, \theta \to \infty$ such that $\frac{\theta_0}{\theta} \to c$. Therefore 
\bal
\Var\left( \bm{s} \cdot \bm{W}_{T,L} \right) \to s_1^2 \beta + s_2^2 \sigma_{T,m,L}^2 + 2s_1s_2 \beta \frac{\sum_{j=1}^m j \frac{A_j(\beta, m, L) (1 - \alpha)_{(j-1)}}{\beta^j (\theta_0/\theta)^{j-1}}}{\tilde{f}}
\eal
as $\theta_0, \theta \to \infty$ such that $\frac{\theta_0}{\theta} \to c$. The result follows by the Cram\'{e}r-Wold device. 
\end{proof}

Combining Lemmas \ref{homozygosityLgroupsLevel1CLT}, \ref{homozygosityLgroupsLevel2.1CLT}, \ref{homozygosityLgroupsLevel2.2CLT}, and \ref{JointCLTXTLGroups} yields the following multivariate CLT.

\begin{proposition} \label{JointCLTLGroups}
For all $1 \leq k \leq L$, the following multivariate CLT holds 
\bal
(X_{1,k}(\alpha, \theta_0, \beta, \theta), X_m(\alpha, \theta_0, \beta, \theta), T_1(\alpha, \theta_0, \beta, \theta), T_m(\alpha, \theta_0, \beta, \theta), Y_m(\alpha, \theta_0, \beta, \theta)) \xrightarrow{D} N(\bm{0}, \bm{\Sigma})
\eal
as $\theta_0, \theta \to \infty$ such that $\frac{\theta_0}{\theta} \to c$, where the covariance matrix is given by
\bal
\bm{\Sigma} = \begin{pmatrix}
1 - \beta & B_k(\alpha, \beta, c, m, L) & 0 & 0 & 0 \\ 
B_k(\alpha, \beta, c, m, L) & \sigma_{X,m,L}^2 & 0 & 0 & 0 \\
0 & 0 & \beta & \beta \frac{\sum_{j=1}^m j A_j(\beta, m, L) \frac{(1 - \alpha)_{(j-1)}}{\beta^j c^{j-1}}}{\sum_{j=1}^m A_j(\beta, m, L) \frac{(1 - \alpha)_{(j-1)}}{\beta^j c^{j-1}}} & 0 \\ 
0 & 0 & \beta \frac{\sum_{j=1}^m j A_j(\beta, m, L) \frac{(1 - \alpha)_{(j-1)}}{\beta^j c^{j-1}}}{\sum_{j=1}^m A_j(\beta, m, L) \frac{(1 - \alpha)_{(j-1)}}{\beta^j c^{j-1}}} & \sigma_{T,m,L}^2 & 0 \\
0 & 0 & 0 & 0 & \sigma_{Y,L}^2
\end{pmatrix},
\eal
with $\sigma_{X,m,L}^2$ the variance from Lemma \ref{homozygosityLgroupsLevel2.2CLT}, $\sigma_{T,m,L}^2$ the variance from Lemma \ref{homozygosityLgroupsLevel2.1CLT}, $\sigma_{Y,L}^2$ the variance from Lemma \ref{homozygosityLgroupsLevel1CLT}, and $B_k(\alpha, \beta, c, m, L)$ defined in Lemma \ref{JointCLTXTLGroups}. 
\end{proposition}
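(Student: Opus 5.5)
The plan is to prove the joint convergence by a nested conditioning argument. This matches the three layers of randomness: the level 1 masses $\bm{V}$, the gamma variable $\gamma_{1/\beta}$, and the stable subordinators $\sigma_1,\dots,\sigma_L$. The two pairs $(X_{1,k},X_{m,L})$ and $(T_1,T_{m,L})$ already converge jointly by Lemma \ref{JointCLTXTLGroups}, and $Y_{m,L}$ converges by Lemma \ref{homozygosityLgroupsLevel1CLT}. It remains to show that the limits of these three blocks are independent, which produces the block-diagonal $\bm{\Sigma}$. By the Cram\'er--Wold device, we fix $\bm{u}=(u_1,\dots,u_5)\in\R^5$ and study the characteristic function of $u_1X_{1,k}+u_2X_{m,L}+u_3T_1+u_4T_{m,L}+u_5Y_{m,L}$. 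We first replace $X_{1,k}$ and $X_{m,L}$ by their truncations $\hat X_{1,k},\hat X_{m,L}$ to indices $i\le\lfloor\theta_0^r\rfloor$. This is harmless by the tail estimate in the proof of Lemma \ref{homozygosityLgroupsLevel2.2CLT} together with Slutsky's theorem.

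\emph{Step 1 (innermost layer).} Conditional on $(\bm{V},\gamma_{1/\beta})$, the variables $T_1$, $T_{m,L}$ and $Y_{m,L}$ are measurable, while $(\hat X_{1,k},\hat X_{m,L})$ is a sum of conditionally independent, conditionally centered increments. The proof of Lemma \ref{JointCLTXTLGroups} shows that the conditional characteristic function of this pair converges in probability to $\exp(-\tfrac12 \bm{u}_{12}^T\bm{\Sigma}_{X,L}\bm{u}_{12})$, where $\bm{u}_{12}=(u_1,u_2)$. The conditional covariance there converges in probability to a deterministic matrix, and the conditional Lyapunov condition holds. We then write the full characteristic function as
\[
\E\Big[e^{i(u_3T_1+u_4T_{m,L}+u_5Y_{m,L})}\,\E\big[e^{i(u_1\hat X_{1,k}+u_2\hat X_{m,L})}\mid\bm{V},\gamma_{1/\beta}\big]\Big].
\]
The inner factor converges in probability to a constant and is bounded by $1$, so dominated convergence lets us factor out $\exp(-\tfrac12\bm{u}_{12}^T\bm{\Sigma}_{X,L}\bm{u}_{12})$, up to an $o(1)$ error.

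\emph{Step 2 (middle layer).} It remains to handle $\E[e^{i(u_3T_1+u_4T_{m,L}+u_5Y_{m,L})}]$. The variable $Y_{m,L}$ depends only on $\bm{V}$, and $T_1$ depends only on $\gamma_{1/\beta}$, which is independent of $\bm{V}$. By decomposition (\ref{Level2.1Decomp}), $T_{m,L}$ equals a term tending to $0$ in probability plus a term $\frac{1}{\tilde f}\sum_j a_j(\theta_0/\theta)\,G_j(\theta)$, where the coefficients $a_j$ are deterministic and each $G_j$ is a function of $\gamma_{1/\beta}$ alone. After Slutsky, the vector $(T_1,T_{m,L})$ is therefore asymptotically a function of $\gamma_{1/\beta}$ alone, and so is independent of $Y_{m,L}$. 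The characteristic function then factorizes exactly into a $\gamma$-part and a $\bm{V}$-part. These converge to $\exp(-\tfrac12\bm{u}_{34}^T\bm{\Sigma}_{T,L}\bm{u}_{34})$ and $\exp(-\tfrac12u_5^2\sigma_{1,m,L}^2)$ respectively, by Lemma \ref{JointCLTXTLGroups} and Lemma \ref{homozygosityLgroupsLevel1CLT}. Multiplying the three factors gives the Gaussian characteristic function with covariance $\bm{\Sigma}$; here $\sigma_{Y,L}^2$ is $\sigma_{1,m,L}^2$.

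\emph{Main obstacle.} The delicate point is Step 1. One must make sure that the conditional covariance limit of $(\hat X_{1,k},\hat X_{m,L})$ is genuinely nonrandom and does not retain dependence on $\gamma_{1/\beta}$ or $\bm{V}$ at leading order. Any surviving randomness there would create cross-covariances with the $T$ and $Y$ blocks. I would check this by expressing every conditional moment as a combination of $\left(\beta\gamma_{1/\beta}/\theta\right)^j$ times $\frac{\theta_0^{j-1}}{(1-\alpha)_{(j-1)}}\sum_{i\le\theta_0^r}V_i^j$. Lemma \ref{asconvHomozyg} and the gamma law of large numbers then send each of these factors to $1$. This is the same mechanism used in Lemma \ref{homozygosityLgroupsLevel2.2CLT}, so I expect it to go through, with care needed only for the truncation and the uniform boundedness used in dominated convergence.
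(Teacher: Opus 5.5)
Your proposal is correct and follows essentially the same route as the paper. The paper also uses decomposition (\ref{Level2.1Decomp}) and the independence of $\gamma_{1/\beta}$ and $\bm{V}$ to get asymptotically independent limits for $(T_1,T_{m,L})$ and $Y_{m,L}$. It then conditions on $(\bm{V},\gamma_{1/\beta})$ and factors off the $X$-block using the convergence of its conditional characteristic function from Lemma \ref{JointCLTXTLGroups} together with dominated convergence. One cosmetic difference is that the paper subtracts the unconditional characteristic function $\E[e^{i(t_1X_{1,k}+t_2X_{m,L})}]$ rather than its Gaussian limit. The nonrandomness of the conditional covariance limit, which you flag as the main obstacle, is already established in the proof of Lemma \ref{JointCLTXTLGroups}.
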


\begin{proof}
For notational simplicity, in what follows we write $X_{1,k} = X_{1,k}(\alpha, \theta_0, \beta, \theta)$, and similarly for $X_{m,L}$, $T_1$, $T_{m,L}$, and $Y_{m,L}$. Recall the decomposition (\ref{Level2.1Decomp}) and note that the first term converges in probability to $0$ while the second term is only a function of $\gamma_{1/\beta}$. On the other hand, recall that $Y_{m,L}$ is only a function of $\bm{V}$. Noting that $\gamma_{1/\beta}$ and $\bm{V}$ are independent, it follows from Lemma \ref{JointCLTXTLGroups} and Slutsky's theorem that
\bal
(T_1, T_{m,L}, Y_{m,L}) \xrightarrow{D} (N(\bm{0}, \bm{\Sigma}_{T,L}), N(0, \sigma_{Y,L}^2)), 
\eal
as $\theta_0, \theta \to \infty$ such that $\frac{\theta_0}{\theta} \to c$, where $N(\bm{0}, \bm{\Sigma}_T)$ and $N(0, \sigma_Y^2)$ are independent random variables. 

Now let $\bm{t} = (t_1, t_2, t_3, t_4, t_5) \in \R^5$. Next we compute the conditional characteristic function of $\bm{t} \cdot (X_{1,k}, X_{m,L}, T_1, T_{m,L}, Y_{m,L})$, conditioned on $\bm{V}$ and $\gamma_{1/\beta}$. Using the fact that $(X_{1,k}, X_{m,L})$, $(T_1, T_{m,L})$, and $Y_{m,L}$ are conditionally independent given $\bm{V}$ and $\gamma_{1/\beta}$, we have that
\bal
&\E\left[ \exp\left( i \bm{t} \cdot (X_{1,k}, X_{m,L}, T_1, T_{m,L}, Y_{m,L}) \right) \middle\vert \bm{V}, \gamma_{1/\beta}\right] \\
&\qquad = \E\left[ \exp\left( it_1X_{1,k} + it_2X_{m,L} + it_3T_1 + it_4T_{m,L} + it_5Y_{m,L} \right) \middle\vert \bm{V}, \gamma_{1/\beta}\right] \\
&\qquad = \exp\left( it_3T_1 + it_4T_{m,L} + it_5Y_{m,L} \right) \E\left[ \exp\left( it_1X_1 + it_2X_{m,L} \right) \middle\vert \bm{V}, \gamma_{1/\beta}\right].
\eal
Thus the unconditional characteristic function of $\bm{t} \cdot (X_{1,k}, X_{m,L}, T_1, T_{m,L}, Y_{m,L})$ is given by
\bal
&\E\left[ \exp\left( i \bm{t} \cdot (X_1, X_m, T_1, T_m, Y_m) \right)\right] \\
&= \E\left[ \exp\left( it_3T_1 + it_4T_{m,L} + it_5Y_{m,L} \right) \E\left[ \exp\left( it_1X_1 + it_2X_{m,L} \right) \middle\vert \bm{V}, \gamma_{1/\beta}\right] \right] \\
&= \E\left[ \exp\left( it_3T_1 + it_4T_{m,L} + it_5Y_{m,L} \right) \left( \E\left[ \exp\left( it_1X_{1,k} + it_2X_{m,L} \right) \middle\vert \bm{V}, \gamma_{1/\beta}\right] - \E\left[ \exp\left( it_1X_{1,k} + it_2X_{m,L} \right) \right] \right)\right] \\
&\qquad + \E\left[\exp\left( it_3T_1 + it_4T_{m,L} + it_5Y_{m,L} \right) \right] \E\left[ \exp\left( it_1X_{1,k} + it_2X_{m,L} \right) \right].
\eal

The first summand above converges to $0$ by Lemma \ref{JointCLTXTLGroups}, dominated convergence, and the fact that the integrand is upper bounded by $1$. On the other hand, recall that $(T_1, T_{m,L}, Y_{m,L}) \xrightarrow{D} (N(\bm{0}, \bm{\Sigma}_{T,L}), N(0, \sigma_{Y,L}^2))$ where $N(\bm{0}, \bm{\Sigma}_T)$ and $N(0, \sigma_Y^2)$ are independent, and that $(X_{1,k}, X_{m,L}) \xrightarrow{D} N(\bm{0}, \bm{\Sigma}_{X,L})$ from Lemma \ref{JointCLTXTLGroups}. Letting $\bm{Z}_{X,L} \sim N(\bm{0}, \bm{\Sigma}_{X,L})$, $\bm{Z}_{T,L} \sim N(\bm{0}, \bm{\Sigma}_{T,L})$, and $Z_{Y,L} \sim N(0, \sigma_{Y,L}^2)$, the second summand converges as
\bal
&\E\left[ \exp\left( it_3T_1 + it_4T_{m,L} + it_5Y_{m,L} \right)\right] \E\left[ \exp\left( it_1X_{1,k} + it_2X_{m,L} \right) \right] \\
&\qquad \qquad \to \E\left[ \exp(i (t_3, t_4) \cdot \bm{Z}_{T,L} )\right] \E\left[ \exp(i t_5 Z_{Y,L} \right] \E\left[ \exp(i (t_1, t_2) \cdot \bm{Z}_{X,L} ) \right]
\eal 
as $\theta_0, \theta \to \infty$ such that $\frac{\theta_0}{\theta} \to c$. This implies $(X_{1,k}, X_{m,L}, T_1, T_{m,L}, Y_{m,L}) \xrightarrow{D} (\bm{Z}_{X,L}, \bm{Z}_{T,L}, Z_{Y,L})$, where $\bm{Z}_X$, $\bm{Z}_T$, and $Z_Y$ are independent normal random variables. Noting that $(\bm{Z}_{X,L}, \bm{Z}_{T,L}, Z_{Y,L}) \sim N(\bm{0}, \bm{\Sigma})$ completes the proof. 
\end{proof}

\subsection{Proof of the CLT for the homozygosity of the HPYP with $L$ groups}

We are now in a position to prove the CLT for $H_{m,L}(\alpha, \theta_0, \beta, \theta)$. 

\begin{proof}[Proof of Theorem \ref{HPYPCLTLGroups}]
Recall that for $m \geq 2$, 
\bal
\tilde{H}_{m,L}(\alpha, \theta_0, \beta, \theta) &= X_{m,L}(\alpha, \theta_0, \beta, \theta) + T_{m,L}(\alpha, \theta_0, \beta, \theta) + Y_{m,L}(\alpha, \theta_0, \beta, \theta) \\
&\qquad + \sum_{\bm{m} \in M_{m,L}} \sqrt{\theta} \left( \prod_{k=1}^L \left( \frac{\theta}{\sigma_{\beta, \theta, k}} \right)^{m_k} - 1 \right) \frac{\sum_{i=1}^\infty \prod_{k=1}^L W_{ki}^{m_k}}{\theta \tilde{f}}.
\eal
We can rewrite 
\bal
&\sum_{\bm{m} \in M_{m,L}} \sqrt{\theta} \left( \prod_{k=1}^L \left( \frac{\theta}{\sigma_{\beta, \theta, k}} \right)^{m_k} - 1 \right) \frac{\sum_{i=1}^\infty \prod_{k=1}^L W_{ki}^{m_k}}{\theta \tilde{f}} \\
&\quad = \sum_{\bm{m} \in M_{m,L}} C_{\bm{m}} \sqrt{\theta} \left( \prod_{k=1}^L \left( \frac{\theta}{\sigma_{\beta, \theta, k}} \right)^{m_k} - 1 \right) \\
&\qquad \qquad + \sum_{\bm{m} \in M_{m,L}} \sqrt{\theta} \left( \prod_{k=1}^L \left( \frac{\theta}{\sigma_{\beta, \theta, k}} \right)^{m_k} - 1 \right) \left( \frac{\sum_{i=1}^\infty \prod_{k=1}^L W_{ki}^{m_k}}{\theta \tilde{f}} - C_{\bm{m}} \right),
\eal
where 
\bal
C_{\bm{m}} = \frac{\sum_{j=1}^m \sum_{\bm{j} \in M_{j,L}} \prod_{\ell = 1}^L \Cs(m_\ell, j_\ell, \beta) \frac{(1 - \alpha)_{(j-1)}}{\beta^j c^{j-1}}}{\sum_{j=1}^m A_j(\beta, m, L) \frac{(1 - \alpha)_{(j-1)}}{\beta^j c^{j-1}}}. 
\eal
By the law of large numbers, the second summand converges in distribution to $0$ as $\theta_0, \theta \to \infty$ such that $\frac{\theta_0}{\theta} \to c$. On the other hand, 
\bal
Z_k(\alpha, \theta_0, \beta, \theta) = \sqrt{\theta} \left( \sum_{i=1}^\infty \frac{W_{ki}}{\theta} - 1 \right) = \sqrt{\theta}\left( \frac{\sigma_{\beta, \theta, k}}{\theta} - 1 \right) \xrightarrow{D} N(0,1)
\eal
as $\theta \to \infty$, for all $1 \leq k \leq L$. Using the fact that $(\sigma_{\beta, \theta, 1}, \ldots, \sigma_{\beta, \theta, L})$ are conditionally independent given $\bm{V}$ and $\gamma_{1/\beta}$, the following joint convergence holds
\bal
\sqrt{\theta}\left[ (\sigma_{\beta, \theta, 1}, \ldots, \sigma_{\beta, \theta, L})^T - (1, \ldots, 1)^T \right] \xrightarrow{D} N(\bm{0}, \bm{I}_L)
\eal
as $\theta \to \infty$, where $\bm{I}_L$ is the $L \times L$ identity matrix. Define the function $h: \R^L \to \R$ by
\bal
h(x_1, \ldots, x_L) = \sum_{\bm{m} \in M_{m,L}} C_{\bm{m}} \prod_{k=1}^L x_k^{-m_k}.
\eal
By direct computation, 
\bal
\nabla h(1, \ldots, 1)^T \cdot \bm{I}_L \cdot \nabla h(1, \ldots, 1) = \sum_{k=1}^L \left( \sum_{\bm{m} \in M_{m,L}} C_{\bm{m}} m_k \right)^2,
\eal
so by the multivariate delta method, 
\bal
\sum_{\bm{m} \in M_{m,L}} C_{\bm{m}} \sqrt{\theta} \left( \prod_{k=1}^L \left( \frac{\theta}{\sigma_{\beta, \theta, k}} \right)^{m_k} - 1 \right) \xrightarrow{D} N\left( 0, \sum_{k=1}^L \left( \sum_{\bm{m} \in M_{m,L}} C_{\bm{m}} m_k \right)^2 \right)
\eal
as $\theta \to \infty$. Combining the above, we get that 
\bal
\sum_{\bm{m} \in M_{m,L}} \sqrt{\theta} \left( \prod_{k=1}^L \left( \frac{\theta}{\sigma_{\beta, \theta, k}} \right)^{m_k} - 1 \right) \frac{\sum_{i=1}^\infty \prod_{k=1}^L W_{ki}^{m_k}}{\theta \tilde{f}} \xrightarrow{D} W \sim N\left( 0, \sum_{k=1}^L \left( \sum_{\bm{m} \in M_{m,L}} C_{\bm{m}} m_k \right)^2 \right).
\eal

Therefore by Proposition \ref{JointCLTLGroups}, for all $m \geq 2$ we have that 
\bal
\tilde{H}_{m,L}(\alpha, \theta_0, \beta, \theta) \xrightarrow{D} X + T + Y + W \sim N(0, \sigma_{c,m,L}^2)
\eal
as $\theta_0, \theta \to \infty$ such that $\frac{\theta_0}{\theta} \to c$, with variance
\bal
&\sigma_{c,m,L}^2 = \sigma_{X,m,L}^2 + \sigma_{T,m,L}^2 + \sigma_{1,m,L}^2 + \sum_{k=1}^L \left( \sum_{\bm{m} \in M_{m,L}} C_{\bm{m}} m_k \right)^2 \\
&- 2\sum_{k=1}^L \left( \frac{ \sum_{j=1}^m \left[\left(\sum_{\bm{m} \in M_{m,L}} \sum_{\bm{j} \in M_{j, L}} (m_k - \beta j_k) \prod_{\ell = 1}^L \Cs(m_\ell, j_\ell, \beta) \right) + \beta j A_j(\beta, m, L) \right] \frac{(1 - \alpha)_{(j-1)}}{\beta^j c^{j-1}} }{\sum_{j=1}^m A_j(\beta, m, L) \frac{(1 - \alpha)_{(j-1)}}{\beta^j c^{j-1}}} \right)^2. \qedhere
\eal
\end{proof}





\Address

\end{document}